\tikzset{cross/.style={cross out, draw=black, minimum size=2*(#1-\pgflinewidth), inner sep=0pt, outer sep=0pt},
cross/.default={1pt}}
\renewcommand{\eprint}[1]{\href{https://arxiv.org/abs/#1}{arXiv:#1}}
\newcommand{\pageafter}[1]{#1~pp.}
\setlist[itemize]{leftmargin=*}
\setlist[enumerate]{leftmargin=*,label=(\arabic*),ref=(\arabic*)}
\newtheorem{theorem}{Theorem}
\crefname{theorem}{Theorem}{Theorems}
\newtheorem{corollary}[theorem]{Corollary}
\crefname{corollary}{Corollary}{Corollaries}
\newtheorem{lemma}[theorem]{Lemma}
\crefname{lemma}{Lemma}{Lemmas}
\newtheorem{proposition}[theorem]{Proposition}
\crefname{proposition}{Proposition}{Propositions}
\newtheorem{conjecture}[theorem]{Conjecture}
\crefname{conjecture}{Conjecture}{Conjectures}
\crefname{question}{Question}{Questions}
\theoremstyle{definition}
\newtheorem{definition}[theorem]{Definition}
\crefname{definition}{Definition}{Definitions}
\crefname{remark}{Remark}{Remarks}
\crefname{example}{Example}{Examples}
\newtheorem{observation}[theorem]{Observation}
\crefname{observation}{Observation}{Observations}
\newtheorem{claim}[theorem]{Claim}
\crefname{claim}{Claim}{Claims}
\crefname{assumption}{Assumption}{Assumptions}
\numberwithin{theorem}{section}
\numberwithin{equation}{section}
\newcommand{\Z}{\mathbb{Z}}
\newcommand{\N}{\mathbb{N}}
\renewcommand{\P}{\mathbb{P}}
\newcommand{\I}{\mathcal{I}}
\newcommand{\A}{\mathcal{A}}
\newcommand{\E}{\mathcal{E}}
\newcommand{\e}{\varepsilon}
\newcommand{\HH}{\mathcal{H}}
\newcommand{\cA}{\ensuremath{\mathcal A}}
\newcommand{\cC}{\ensuremath{\mathcal C}}
\newcommand{\cD}{\ensuremath{\mathcal D}}
\newcommand{\cE}{\ensuremath{\mathcal E}}
\newcommand{\cF}{\ensuremath{\mathcal F}}
\newcommand{\cH}{\ensuremath{\mathcal H}}
\newcommand{\cI}{\ensuremath{\mathcal I}}
\newcommand{\cK}{\ensuremath{\mathcal K}}
\newcommand{\cS}{\ensuremath{\mathcal S}}
\newcommand{\cT}{\ensuremath{\mathcal T}}
\newcommand{\cU}{\ensuremath{\mathcal U}}
\newcommand{\cW}{\ensuremath{\mathcal W}}
\newcommand{\bbE}{{\ensuremath{\mathbb E}} }
\newcommand{\bbH}{{\ensuremath{\mathbb H}} }
\newcommand{\bbN}{{\ensuremath{\mathbb N}} }
\newcommand{\bbP}{{\ensuremath{\mathbb P}} }
\newcommand{\bbR}{{\ensuremath{\mathbb R}} }
\newcommand{\bbZ}{{\ensuremath{\mathbb Z}} }
\newcommand{\ba}{\ensuremath{\mathbf{a} }}
\newcommand{\bb}{\ensuremath{\mathbf{b} }}
\newcommand{\bc}{\ensuremath{\mathbf{c} }}
\newcommand{\bd}{\ensuremath{\mathbf{d} }}
\newcommand{\be}{\ensuremath{\mathbf{e} }}
\newcommand{\bm}{\ensuremath{\mathbf{m} }}
\newcommand{\bs}{\ensuremath{\mathbf{s} }}
\renewcommand{\>}{\rangle}
\newcommand{\1}{{\ensuremath{\mathbbm{1}}} }
\DeclareDocumentCommand \to { o o } {%
  \IfNoValueTF {#1} {\IfNoValueTF{#2}{\rightarrow}{\xrightarrow[#2]}}%
{\IfNoValueTF{#2}{\xrightarrow{#1}}{\xrightarrow[#2]{#1}}}}
\DeclareDocumentCommand \nto { o } {%
  \IfNoValueTF {#1} {{\ensuremath{\centernot{\rightarrow}}} }{{\ensuremath{\centernot{\xrightarrow{#1}}}}}%
}
\DeclareMathOperator{\diam}{diam}
\DeclareMathOperator{\Pod}{Pod}
\title{Sharp metastability transition for two-dimensional bootstrap percolation with symmetric isotropic threshold rules}
\date{\today}
\author[1,2]{Hugo Duminil-Copin}
\author[3]{Ivailo Hartarsky}
\affil[1]{Universit\'e de Gen\`eve, Section de Math\'ematiques, 2-4 rue du Li\`evre 1211 Geneva, Switzerland}
\affil[2]{Institut des Hautes \'Etudes Scientifiques, 35 route de Chartres
91440 Bures-sur-Yvette, France}
\affil[3]{TU Wien, Faculty of Mathematics and Geoinformation, Institute of Statistics and Mathematical Methods in Economics, Research Unit of Probability, Wiedner Hauptstra\ss e 8-10, A-1040 Vienna, Austria, \texttt{ivailo.hartarsky@tuwien.ac.at}}
\begin{document}

\maketitle

\begin{abstract} 
We study two-dimensional critical bootstrap percolation models. We establish that a class of these models including all isotropic threshold rules with a convex symmetric neighbourhood, undergoes a sharp metastability transition. This extends previous instances proved for several specific rules. The paper supersedes a draft by Alexander Holroyd and the first author from 2012. While it served a role in the subsequent development of bootstrap percolation universality, we have chosen to adopt a more contemporary viewpoint in its present form.
\end{abstract}

\noindent\textbf{MSC2020:} 60K35; 60C05
\\
\textbf{Keywords:} bootstrap percolation, sharp threshold, metastability

\section{Introduction}
A threshold bootstrap percolation model is a simple cellular automaton that provides a useful model for studying several phenomena such as metastability, dynamics of glasses or crack formation. A famous example of a threshold model is the \emph{$2$-neighbour bootstrap percolation} originally introduced by Chalupa, Leath and Reich \cite{Chalupa79} (also see \cite{Kogut81}). In this model, sites of the square lattice $\mathbb Z^2$ are infected or healthy. At time 0, sites are infected with probability $p$ independently of each other (we denote the corresponding measure by $\mathbb P_p$). At each time step, a site becomes infected if two or more of its nearest neighbours are infected.

The first rigorous result on this model \cite{vanEnter87}, dating back to 1987, established that every site of $\mathbb Z^2$ becomes infected almost surely whenever $p>0$. This motivates the study of the (random) first time $\tau$ at which 0 becomes infected as $p$ goes to 0. In \cite{Aizenman88}, Aizenman and Lebowitz proved that there exist two constants $c,C\in(0,\infty)$ such that
\[\lim_{p\rightarrow 0}\mathbb P_p\left(e^{c/p}\le \tau\le e^{C/p}\right)=1.\] 
We refer to this article for an enlightening exposition of the metastability effects in the model. The question of whether $c$ and $C$ could be chosen arbitrary close to each other was left open for a long time. Finally, a sharp metastability transition was shown to occur in \cite{Holroyd03}: $p\log \tau$ converges in probability to $\pi^2/18$ as  $p\rightarrow 0$. More precise estimates for $\tau$ were derived later in \cite{Hartarsky24locality}.

Several authors investigated more general growth rules and the right order of magnitude for $\log\tau$ is now known for all rules \cite{Bollobas23}, hence generalising the result of Aizenman and Lebowitz. The sharp metastability transition, though, remained available only for a handful of isolated examples \cites{Bollobas17,Duminil-Copin13,Holroyd03,Holroyd03a}. The goal of this paper is to prove sharp metastability for a wide class of models. In particular, we show that every isotropic symmetric convex threshold bootstrap percolation model exhibits a sharp transition.

\subsection{\texorpdfstring{$\cU$}{U}-bootstrap percolation}
\label{subsec:BP}

Let $\Z^2=\{x=(x_1,x_2):x_1,x_2\in \Z\}$ be the set of all 2-vectors of integers and $\mathbb N=\{0,1,\dots\}$. Elements of $\Z^2$ are called {\em sites}. An \emph{update rule} is any finite non-empty subset of $\bbZ^2\setminus\{0\}$. An \emph{update family} is a finite non-empty set of update rules. An update family $\cU$ is \emph{symmetric}, if for every $U\in\cU$ we have $-U=\{-x:x\in U\}\in\cU$. Given an update family $\cU$ and a set $A=A_0\subseteq\bbZ^2$ of \emph{initial infections}, we recursively define
\[A_{t+1}=A_t\cup\{x\in\bbZ^2:\exists U\in\cU,\forall u\in U,x+u\in A_t\}\]
to be the set of sites \emph{infected at time $t$} in the $\cU$-bootstrap percolation process. The set $[A]=\bigcup_{t\ge 0}A_t$ of eventually infected sites is called the \emph{closure} of $A$. A set $A\subseteq\bbZ^2$ is called \emph{stable} if $[A]=A$. An observable of particular interest is the infection time of the origin \[\tau=\inf\{t\in\bbN:0\in A_t\}\in\bbN\cup\{\infty\}.\] We will systematically be interested in the asymptotics of $\tau$ when each site is initially infected independently with probability $p\to 0$. We denote the corresponding distribution of $A$ by $\bbP_p$.

Among all update families, threshold rules initially received particular attention \cite{Gravner99}. They are defined by a finite \emph{neighbourhood} $\cK\subset \bbZ^2$, containing $0$, and a positive integer \emph{threshold} $\theta$. Then \[\cU(\cK,\theta)=\{U\subseteq \cK\setminus\{0\}:|U|=\theta\}\] is the associated update family. In other words, a site $x$ becomes infected if at least $\theta$ of the sites in its neighbourhood $x+\cK$ are already infected. A set $\cK\subseteq \bbR^2$ is called \emph{symmetric} if $x\in \cK$ implies $-x\in \cK$ for all $x\in\bbR^2$. We say that a neighbourhood $\cK\subset \bbZ^2$ is \emph{convex symmetric}, if it is the intersection of a bounded convex symmetric subset of $\bbR^2$ with $\bbZ^2$. We say that a neighbourhood $\cK\subset\bbZ^2$ is \emph{two-dimensional} if $\cK\not\subset u\bbR$ for every $u\in\bbR^2$. An example of a two-dimensional convex symmetric neighbourhood is given in \cref{fig:example} and can also serve as illustration for the definitions to follow.

We will require a few definitions from the bootstrap percolation universality framework \cites{Bollobas15,Bollobas23,Gravner99,Hartarsky22univlower}. A \emph{direction} is a unit vector of $\bbR^2$, viewed as an element of the unit circle $S^1$. We denote the open half plane with outer normal $u$ by $\bbH_u=\{x\in\bbZ^2:\<u,x\><0\}$ and its boundary by  $l_u=\{x\in\bbZ^2:\<u,x\>=0\}$. A direction $u\in S^1$ is called \emph{stable}, if $\bbH_u$ is stable. The direction is \emph{unstable} otherwise, which can be reinterpreted as follows: there exists an update rule $U\subset\bbH_u$. In the case of a threshold rule, unstable directions $u$ are those for which $|\bbH_u\cap \cK|\ge \theta$.

A direction $u\in S^1$ is called \emph{rational} if $\lambda u\in \bbZ^2\setminus\{0\}$ for some $\lambda\in\bbR$. In this case, we denote $\rho_u=\min\{\rho>0:\exists x\in\bbZ^2,\<u,x\>=\rho\}$. Then $u\bbR\cap\bbZ^2=(u/\rho_u)\bbZ$. Thus, it will be convenient to define $u^\perp=(u_2,-u_1)/\rho_u$, so that $l_u=u^\perp\bbZ$. We further denote by  $l_u(n)=\{x\in\bbZ^2:\<u,x\>=n\rho_u\}$ the $n$-th line perpendicular to $u$, so that $\bbZ^2=\bigsqcup_{n\in\bbZ}l_u(n)$. Note that for any $n\in\bbZ$, $l_u(n)$ is a translate of $l_u$.

For an isolated stable or an unstable direction $u\in S^1$, we define its \emph{difficulty}
\begin{equation}
\label{eq:def:alpha}
\alpha(u)=\min\{|Z|:Z\subset\bbZ^2,|[\bbH_u\cup Z]\setminus\bbH_u|=\infty\}\in\bbN.
\end{equation}
That is, the difficulty of $u$ is the minimal number of infected sites needed in addition to the half-plane $\bbH_u$, so that infinitely many additional sites become infected. An update family is called \emph{isotropic} if it has a finite but nonzero number of stable directions and each open semicircle of $S^1$ contains a stable direction of maximal difficulty.\footnote{The reader who is familiar with the jargon of \cite{Bollobas23} can check that a critical symmetric update family is isotropic if and only if it is balanced.} For isotropic models we call 
\begin{equation}
\label{eq:def:alpha:global}\alpha=\max_{u\in S^1}\alpha(u)
\end{equation} the \emph{difficulty} of the update family. A set $Z$ realising the minimum in \cref{eq:def:alpha} is called a \emph{helping set}. A helping set $Z\subset \bbZ^2$ for $u$ is \emph{voracious} if $[\bbH_u\cup Z]\supseteq l_u$. An isolated stable direction is called \emph{voracious}, if every helping set for it is voracious. The update family is called \emph{voracious} if all isolated stable directions are voracious.

It was shown in \cite{Bollobas23} that for every isotropic update family, there exists $C>c>0$ such that 
\[\lim_{p\rightarrow 0}\mathbb P_p\left(e^{ c/p^{\alpha}}<\tau<e^{ C/p^{\alpha}}\right)=1.\]  
One can check that symmetric threshold models are isotropic if and only if the maximum $\iota(\cK)=\max_{u\in S^1}|l_u\cap \cK|$ is attained for at least two non-opposite directions $u$ and $|\cK|-\iota(\cK)<2\theta<|\cK|$. In that case, the difficulty is given by $\alpha=\theta-(|\cK|-\iota(\cK))/2$ and the difficulty of a direction $u\in S^1$ is $\alpha(u)=\max(0,\theta-|\cK\setminus l_u|/2)$ (see \cite{Gravner99}).

\subsection{Main results}
\label{subsec:results}
Our main result is the following.
\begin{theorem}\label{th:main}
For any symmetric voracious isotropic update family with difficulty $\alpha$, there exists $\lambda\in(0,\infty)$ such that for all $\varepsilon>0$,
\[\lim_{p\rightarrow0}\mathbb P_p\left(|p^\alpha\log \tau-\lambda|>\varepsilon\right)=0.\]
\end{theorem}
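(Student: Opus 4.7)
The plan is to establish matching upper and lower bounds on $p^\alpha\log\tau$, both converging in probability to a common value $\lambda\in(0,\infty)$. This strategy, pioneered by Holroyd for the $2$-neighbour model, identifies $\lambda$ as the value of a variational problem measuring the optimal cost of assembling a critical droplet. Heuristically, a droplet of linear size $r$ grows in a stable direction $u$ by absorbing a helping set of size $\alpha(u)$ on an adjacent line, an event of probability $\approx r\cdot p^{\alpha(u)}$ per unit time; by voracity this absorbed helping set immediately spreads along the whole line, advancing the droplet by one layer in direction $u$. The critical scale is $r_c\sim p^{-\alpha}$, and the typical time to grow from microscopic to critical size is $\exp(\lambda/p^\alpha)$, with $\lambda$ an integral of ``$-\log(1-e^{-s})$''-type integrands aggregated over an optimal schedule of stable directions of maximal difficulty, the other stable directions being negligible in the critical regime.

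For the upper bound I would work inside a box $\Lambda$ of side length slightly larger than $e^{\lambda/p^\alpha}$ and show that $0\in[A\cap\Lambda]$ with high probability. The key step is to exhibit a polynomially probable ``seed'' together with a prescribed growth trajectory: at successive stages the current droplet absorbs a helping set on one of its boundary lines, producing the next droplet in a prescribed shape sequence. Summing logarithms of the per-stage probabilities along the optimal trajectory gives a lower bound of $e^{-(\lambda+\e)/p^\alpha}$ for the probability of full growth, and averaging over the $|\Lambda|^2$ possible seed positions completes this half. Voracity is essential here, ensuring that each helping set propagates along an entire line and the droplet retains its convex, direction-indexed shape.

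The lower bound is the more delicate half. By an Aizenman--Lebowitz type reduction, if $\tau\le e^{(\lambda-\e)/p^\alpha}$ then there must exist an internally spanned droplet of critical diameter inside a box of comparable size; a union bound over positions reduces the problem to upper-bounding the probability that a fixed box is internally spanned. Following a Holroyd-style hierarchy argument adapted to the voracious isotropic setting, I would decompose the spanning event into a tree of successive ``doublings'' of intermediate droplets and bound the probability of each doubling by a factor of the form $1-e^{-rp^{\alpha(u)}}$ corresponding to the growth of a droplet of size $r$ in direction $u$. Passing to the continuous limit along the hierarchy and optimising over hierarchies reproduces the integral defining $\lambda$, thereby matching the upper bound.

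The main obstacle lies in the lower bound and in the identification of the sharp constant. Adapting the hierarchy machinery to general voracious isotropic families requires a careful choice of ``skeletons'' recording which stable directions are active at each intermediate scale, and the variational analysis must confirm that only maximum-difficulty directions matter at critical scale while lower-difficulty directions behave essentially deterministically. A further nontrivial task is converting the voracity hypothesis, which is a statement about individual helping sets, into the robust straight-line growth dynamics on which both bounds ultimately rely; the isotropy hypothesis (existence of maximum-difficulty stable directions in every open semicircle) is what guarantees the droplet has finite area and the resulting $\lambda$ is finite.
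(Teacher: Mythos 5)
Your plan takes essentially the same route as the paper: $\lambda$ is defined by a variational problem over growing droplet sequences, the upper bound is proved by exhibiting a seed and a prescribed growth schedule whose per-stage cost is controlled by traversability probabilities, and the lower bound is proved by an Aizenman--Lebowitz reduction to a critical spanned droplet followed by a Holroyd-style hierarchy decomposition. The paper makes this precise with quasi-stable directions defining polygonal droplets, $W$-helping sets and a ``traversability'' event that replaces the looser heuristic of ``absorbing a helping set on an adjacent line'', and a careful proposition ($\lambda = \lambda_\alpha$) justifying your claim that only maximum-difficulty directions contribute at critical scale.

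Two points your sketch glosses over are worth flagging. First, the ``doubling'' picture for the hierarchy is not quite right: the hierarchy has seeds, normal vertices (single child with bounded total-location increment) and splitters (two $K$-connected children whose span gives the parent), and one needs a dedicated subadditivity-of-span lemma plus the monotonicity $W_p(D_1,D_2)\geq W_p(D_1+D',D_2+D')$ to ``merge'' subtrees of a splitter without loss; this is where the droplet calculus does real work. Second, the symmetry hypothesis is essential in the lower bound (not the upper one) but your sketch never identifies where it enters: in the crossing estimate one shows that if a strip $R^u(m,k)$ is $K$-crossed with no large infected cluster, then \emph{either} $l^u(0)$ \emph{or} $l^{-u}(-k+1)$ must be occupied, and the inductive decomposition that follows uses the traversability functions $h^u_p$ and $h^{-u}_p$ interchangeably---which requires $\cU = -\cU$. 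Without flagging these, the plan reads as though it would apply to any voracious isotropic family, which the paper explicitly does not claim for the lower bound.
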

The constant $\lambda$ is identified as the solution of a variational problem, see \cref{def:W}. Roughly speaking, $\lambda$ quantifies the probability of the optimal way for a small polygonal region of infections, whose directions are dictated by the stable directions, to expand to infinity. While our definition of $\lambda$ is quite implicit, we expect that the method of \cite{Hartarsky24locality} can be used to provide numerical estimates of this constant. We note that symmetry will only be used in \cref{sec:lower}, where the lower bound on $\tau$ is proved, but not for the upper one.

We further show that voracity is rather common.
\begin{proposition}\label{prop:voracity}
For every threshold rule with two-dimensional convex symmetric neighbourhood, all isolated stable directions are voracious.
\end{proposition}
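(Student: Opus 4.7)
My plan is to prove by contradiction that for any helping set $Z \subseteq \bbZ^2 \setminus \bbH_u$ of size $\alpha := \alpha(u)$ (with $Z$ assumed disjoint from $\bbH_u$, as we may by minimality of $|Z|$), the closure $A := [\bbH_u \cup Z]$ must contain all of $l_u$ whenever $|A \setminus \bbH_u| = \infty$. First, I will establish a geometric lemma: the convex symmetric nature of $\cK$ forces $\cK \cap l_u$ to be a symmetric lattice-interval $\{j u^\perp : |j| \le m\}$, and the isolation hypothesis on $u$ yields $m \ge \alpha$. Indeed, a small rotation of $u$ to a nearby (unstable) direction $u'$ transits precisely the $m$ lattice points of $\cK \cap l_u$ on one side of the origin into $\bbH_{u'}$; unstability $|\bbH_{u'} \cap \cK| \ge \theta$, combined with $|\bbH_u \cap \cK| = \theta - \alpha$, then gives $m \ge \alpha$.

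Next, two observations reduce the problem to a single remaining case. First, if $\alpha$ consecutive lattice points of $l_u$ lie in $A$, then $l_u \subseteq A$: the site adjacent to such a run has $\theta - \alpha$ infected neighbours from $\bbH_u$ and, by $m \ge \alpha$, another $\alpha$ from within the run, meeting the threshold $\theta$; iteration and left-right symmetry then fill $l_u$. Second, if $A$ already contains a half-line $\{j u^\perp : j \ge N\}$, the same count applied to $(N-1) u^\perp$, now using its $m$ infected right-neighbours on $l_u$, places $(N-1) u^\perp$ in $A$, and iteration yields $l_u \subseteq A$ (symmetrically for a left half-line). It therefore suffices to rule out the scenario where $A \cap l_u$ contains neither $\alpha$ consecutive sites nor any half-line.

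In this remaining sparse scenario, $A \cap l_u$ decomposes into runs of length strictly less than $\alpha$, and the gap sites of $l_u$ are kept out of $A$ by its stability. My plan is to show that such sparseness on $l_u$ cannot sustain $|A \setminus \bbH_u| = \infty$, producing the desired contradiction. The contribution of $\bbH_u$ to the neighbourhood of a site at height $n > 0$ strictly decreases with $n$ and vanishes once $n$ exceeds the $u$-extent of $\cK$, so sustaining infections on the higher lines $l_u(n)$ demands ever denser local support from $A$ itself. Using the convex symmetry of $\cK$, I aim to bound the $u^\perp$-range of $A \cap l_u(n)$ inductively in terms of that of $A \cap l_u(n-1)$ and the finite seed $Z$, ultimately forcing $A \setminus \bbH_u$ to lie in a finite region. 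Rigorously carrying out this ``sparseness propagates upward and chokes off growth'' argument, by carefully tracking how the convexity of $\cK$ constrains the structure on neighbouring lines, will be the main obstacle of the proof.
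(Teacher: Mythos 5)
Your opening reduction is sound: $\cK\cap l_u$ is a symmetric lattice interval $\{j u^\perp:|j|\le m\}$, and the rotation argument comparing $|\cK\cap\bbH_{u'}|$ for a nearby unstable $u'$ to $|\cK\cap\bbH_u|=\theta-\alpha(u)$ gives $m\ge\alpha(u)$ correctly. Your Observation 1 (an $\alpha$-run on $l_u$ propagates to all of $l_u$, since the adjacent site sees $\theta-\alpha$ infections in $\bbH_u$ plus all $\alpha$ run sites within the window of radius $m\ge\alpha$) is also correct, and Observation 2 is subsumed by it. The paper uses essentially this same bound $(|\cK\cap l_u|-1)/2\ge\alpha(u)$ and this same propagation idea.

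The genuine gap is precisely where you flag it. Your ``remaining sparse case'' is left as a plan, and that plan is the entire difficulty of the proposition. Moreover, the paper sidesteps it completely: rather than stratifying by the density of $A\cap l_u$, it stratifies by \emph{where and when the first new infection occurs}. Concretely: (i) it shows via its Lemma 2.1 (a statement about $\cK\cap l_u(1)$, not $\cK\cap l_u$) that the first newly infected site must lie on $l_u$; (ii) if all new infections stay in $l_u$, it picks the \emph{extremal} new infection $y$ past the helping set, so that all $\alpha(u)$ of its helpers in $y+\cK\cap l_u$ lie on one side — this works even when those helpers are not consecutive, so it covers configurations your Observation 1 does not reach — and then propagates from $y$ in both $u^\perp$-directions; (iii) if some $x\notin l_u$ gets infected, it examines the \emph{first} such $x$ and shows this forces either $H\subset l_u(1)$ (when $\cK$ lies in three lines, handled by a direct induction) or an $\alpha(u)$-run on $l_u$ (via Lemma 2.1's estimate $|\cK\cap l_u(1)|\ge\alpha(u)$ when $\cK$ extends past $l_u(1)$). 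None of these steps requires, or even implies, the uniform ``sparseness on $l_u$ forces $A\setminus\bbH_u$ finite'' statement you are aiming for.

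Beyond being unfinished, your planned inductive bound on the $u^\perp$-range of $A\cap l_u(n)$ is not obviously tractable. The closure may contain $\alpha(u)$ helpers in a length-$m$ window that are not consecutive, so $A\cap l_u$ can a priori be infinite while $\alpha$-run-free (for instance, runs of length $\alpha-1$ separated by isolated gaps), and controlling the $u^\perp$-extent on every line against both the diminishing help from $\bbH_u$ and a finite seed spread across several lines is substantially more delicate than the paper's first-infection case analysis. If you want to pursue your route, the sparseness step must be made rigorous; the paper's structure suggests the cleaner route is to localise attention at the extremal or earliest newly infected sites rather than at the global density of $A\cap l_u$.
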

This proposition provides a partial answer to Gravner and Griffeath \cite{Gravner99}, who wrote ``Such examples suggest the possibility of a general theory to the effect that voracity should be automatic for `nice' $\cK$ . Such a theory is far from easy to develop; at present the only result in
this direction, due to Bohman \cite{Bohman99a} and proved by a complex combinatorial argument, applies to supercritical threshold growth dynamics on box neighborhoods.'' We should note that, given an arbitrary update family, using the algorithm of \cite{Hartarsky20a}, one can also determine whether it is voracious. However, as shown in \cite{Hartarsky20a}, determining the difficulty $\alpha$ is NP-hard, so we expect that determining whether an update family is voracious is also NP-hard.

In addition to convex symmetric threshold rules treated by \cref{prop:voracity}, to the best of our knowledge, all commonly studied isotropic update families are both symmetric and voracious---the $k$-cross model, Frob\"ose bootstrap percolation, modified and non-modified 2-neighbour bootstrap percolation (on $\bbZ^2$), 3-neighbour bootstrap percolation on the triangular lattice. However, for these last examples \cref{th:main} and more is already known \cites{Bringmann12,Hartarsky24locality}. Therefore, the importance of our result stems from its universality.

It is known that beyond the class of isotropic models, asymptotic behaviours that differ from the one in \cref{th:main} are displayed \cites{Bollobas23,Balister16,Bollobas15}. Nevertheless, the result should hold in yet greater generality, as discussed in \cref{sec:future}. On the other hand, it should be noted that our techniques in conjunction with those of \cites{Hartarsky24univupper,Hartarsky23FA} should lead to sharp threshold results like \cref{th:main} with $\lambda$ replaced by $2\lambda$ for symmetric voracious isotropic kinetically constrained models.

\subsection{Organization of the paper}
The rest of the paper is organised as follows. We begin by proving the combinatorial result of \cref{prop:voracity} in \cref{sec:convex}. We provide the setup for the proof of \cref{th:main} in \cref{sec:setup}. In particular, we introduce the notions of traversability and droplets, and define the constant $\lambda$ appearing in \cref{th:main}. \Cref{sec:upper} proves the upper bound of \cref{th:main}, while \cref{sec:lower} proves the lower one. Finally, in \cref{sec:future}, we discuss possible future directions and generalisations of \cref{th:main}.

\section{Convex symmetric threshold rules}
\label{sec:convex}
In this section, we establish \cref{prop:voracity} in order to better familiarise ourselves with helping sets. For the rest of the section, we fix a two-dimensional convex symmetric neighbourhood $\cK\ni 0$ and threshold $\theta$. We further fix an isolated stable direction $u$. Thus, 
\begin{equation}
\label{eq:alpha:K:bound}
\alpha(u)=\theta-|\cK\setminus l_u|/2=\theta-|\cK\cap\bbH_u|>0.
\end{equation} 
Since $\cK\cap l_u\neq\varnothing$, $u$ is necessarily rational. The next preparatory result states that $\cK$ intersects the first line with normal $u$ and, if $\cK$ reaches beyond the first line, then it contains at least $\alpha(u)$ sites on the first line. See \cref{fig:example} for an illustration.
\begin{lemma}
\label{lem:convex:first:line}
We have $l_u(1)\cap \cK\neq\varnothing$. Moreover, if $l_u(2)\cap \cK\neq \varnothing$, then $|l_u(1)\cap \cK|\ge \alpha(u)$.
\end{lemma}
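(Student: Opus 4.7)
The plan is to choose convenient coordinates on $\bbZ^2$ in which $l_u$ becomes the horizontal axis. Concretely, let $w\in\bbZ^2$ be any lattice point with $\langle u,w\rangle=\rho_u$, which exists by definition of $\rho_u$. Then $(u^\perp,w)$ is a $\bbZ$-basis of $\bbZ^2$, and a site $ku^\perp+mw$ lies in $l_u(n)$ iff $m=n$. After this unimodular change of basis, I may assume $l_u(n)=\bbZ\times\{n\}$ and $\cK=K'\cap\bbZ^2$ for some bounded convex symmetric $K'\subset\bbR^2$.

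The crucial input is the inequality $|l_u\cap\cK|\ge 2\alpha(u)+1$, which I would derive by the standard small-rotation argument. Since $u$ is isolated stable, arbitrarily small rotations $u_\epsilon$ of $u$ are unstable, hence $|\bbH_{u_\epsilon}\cap\cK|\ge\theta$. For $\epsilon$ positive and small enough, $\bbH_{u_\epsilon}\cap\cK=(\bbH_u\cap\cK)\cup R$, where $R$ consists of the sites of $l_u\cap\cK$ lying in one of the two open half-lines delimited by $0$. Symmetry of $\cK$ equates the cardinalities of the two halves, so $|R|=(|l_u\cap\cK|-1)/2$, while the instability bound forces $|R|\ge\theta-|\bbH_u\cap\cK|=\alpha(u)$. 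Setting $S=\{k\in\bbZ:(k,0)\in\cK\}$, this gives $|S|\ge 2\alpha(u)+1$, and since $S$ is a symmetric set of integers containing $0$, its maximum $k^{\ast}:=\max S$ satisfies $k^{\ast}\ge\alpha(u)$ (as any $\alpha(u)$ distinct positive integers have maximum at least $\alpha(u)$).

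For part~(1), two-dimensionality of $\cK$ produces some $(k_0,m_0)\in\cK$ with $m_0\ne 0$, and by symmetry I may take $m_0\ge 1$. If $m_0=1$ there is nothing to prove, so suppose $m_0\ge 2$. Pick any $k'\in S$ with $k'\ge 1$, which exists since $|S|\ge 3$. By convexity of $K'$, the segments from $(\pm k',0)\in\cK$ to $(k_0,m_0)\in\cK$ lie in $K'$ and meet $\bbR\times\{1\}$ at the $k$-coordinates $(k_0\pm(m_0-1)k')/m_0$. Convexity therefore puts the whole interval between them inside $K'\cap(\bbR\times\{1\})$, and this interval has length $2(m_0-1)k'/m_0\ge k'\ge 1$. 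Any closed real interval of length at least $1$ contains an integer, producing the required lattice point of $\cK\cap l_u(1)$.

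For part~(2), fix $(k_0,2)\in\cK\cap l_u(2)$ and apply the same argument with $m_0=2$ and the extremal choice $k'=k^{\ast}\ge\alpha(u)$: the segments from $(\pm k^{\ast},0)$ to $(k_0,2)$ cross $\bbR\times\{1\}$ at $((k_0\pm k^{\ast})/2,1)$, so $K'$ contains a horizontal segment of length $k^{\ast}\ge\alpha(u)$ at height $1$. Since a closed interval of length $L\in\bbN$ always contains at least $L$ integers, I conclude $|\cK\cap l_u(1)|\ge\alpha(u)$. I expect the main obstacle to be the tilting computation of the second paragraph, where one must carefully track how the open half-plane changes under a small rotation and use symmetry to split $l_u\cap\cK$ evenly into equal halves; the remaining steps are routine applications of convexity of $K'$.
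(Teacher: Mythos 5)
Your proof is correct and follows essentially the same route as the paper: after choosing coordinates in which $l_u$ is the horizontal axis, both arguments apply a similar-triangles (Thales) argument to the triangle with apex a point of $\cK$ at height $\ge 2$ and base the extremal points $\pm k^{*}$ of $\cK\cap l_u$, then count lattice points on the height-$1$ cross-section. The only notable difference is that you explicitly derive the key bound $|\cK\cap l_u|\ge 2\alpha(u)+1$ (equivalently $k^{*}\ge\alpha(u)$) by a small-rotation argument from isolated stability, whereas the paper invokes it tacitly (it also follows from the formula $\alpha(u)=\theta-|\cK\setminus l_u|/2$ together with $2\theta<|\cK|$ from the isotropy hypothesis), so your version is a more self-contained presentation of the same argument.
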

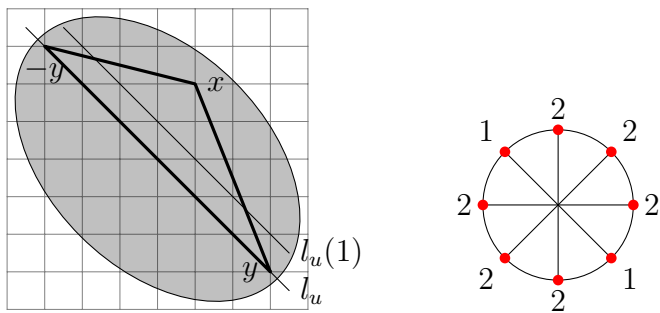
\begin{figure}
    \centering
    \begin{tikzpicture}[line cap=round,line join=round,>=triangle 45,x=0.5cm,y=0.5cm]
\draw[step=1.0,gray,very thin] (-4,-4) grid (4,4);
\draw [rotate around={-45:(0,0)},fill=black,fill opacity=0.25] (0,0) ellipse (4.5 and 2.9);
\draw [domain=-3.5:3.5] plot(\x,{(-0--1*\x)/-1});
\draw [very thick] (-3,3) node[below]{$-y$}-- (1,2) node[right]{$x$}-- (3,-3) node[left]{$y$}--cycle;
\draw [domain=-2.5:3.5] plot(\x,{(-1--1*\x)/-1});
\draw (3.5,-3.5) node[right]{$l_u$};
\draw (3.5,-2.5) node[right]{$l_u(1)$};
\end{tikzpicture}
\qquad
\begin{tikzpicture}
\draw(0,0) circle (1);
\draw (0,0)-- (1,0);
\draw (0,1)-- (0,0);
\draw (0,0)-- (-1,0);
\draw (0,0)-- (0,-1);
\draw (-0.707,-0.707)-- (0.707,0.707);
\draw (-0.707,0.707)-- (0.707,-0.707);
\fill [color=red] (0,1) circle (2pt) node[anchor=south,black] {$2$};
\fill [color=red] (0,-1) circle (2pt) node[anchor=north,black] {$2$};
\fill [color=red] (1,0) circle (2pt) node[anchor=west,black] {$2$};
\fill [color=red] (-1,0) circle (2pt) node[anchor=east,black] {$2$};
\fill [color=red] (0.707,0.707) circle (2pt) node[anchor=south west,black] {$2$};
\fill [color=red] (-0.707,-0.707) circle (2pt) node[anchor=north east,black] {$2$};
\fill [color=red] (0.707,-0.707) circle (2pt) node[anchor=north west,black] {$1$};
\fill [color=red] (-0.707,0.707) circle (2pt) node[anchor=south east,black] {$1$};
\end{tikzpicture}
    \caption{Example of an isotropic threshold rule. The neighbourhood $\cK$ is the intersection of $\bbZ^2$ with the shaded convex symmetric subset of $\bbR^2$. The threshold $\theta$ can be taken equal to $18$, making the model isotropic with $\alpha=2$ and $\alpha(u)=2$. It has six stable directions with difficulty $3$ and two with difficulty $1$, as depicted on the right. The triangle $T$ used in the proof of \cref{lem:convex:first:line} is thickened.}
    \label{fig:example}
\end{figure}
\begin{proof}
Since $\cK$ is two-dimensional, we have $\cK\not\subset l_u$. If $l_u\not\supset\cK\subset l_u\cup l_u(-1)\cup l_u(1)$, we are done by symmetry. Let $x\in \cK\cap l_u(n)$ for some $n\ge 2$. Since $u$ is stable, $|\cK\cap l_u|\ge 3$ by symmetry. Let $y=u^\perp(|\cK\cap l_u|-1)/2\in \cK\cap l_u$, that is, the last site of $\cK$ on $l_u$. Consider the  
triangle $T\subset \bbR^2$ with vertices $x$, $y$, $-y$. By convexity the lattice sites in $T$ are in $\cK$. But the length of the $(-y,y)$ side of $T$ is $2\|y\|=(|\cK\cap l_u|-1)/\rho_u$ and the height from $x$ has length $\<x,u\>\ge 2\rho_u$. Therefore, by Thales theorem, the segment
\[\{t\in T:\<t,u\>=\rho_u\}\]
has length at least $(|\cK\cap l_u|-1)/(2\rho_u)\ge \alpha(u)/\rho_u$. Since $l_u(1)=\{z\in\bbZ^2:\<z,u\>=\rho_u\}$ is a translate of $u^\perp\bbZ$, it necessarily intersects this segment in at least $\alpha(u)$ points.
\end{proof}
We are now ready to prove \cref{prop:voracity}, but before spelling out the details, let us give an outline of the argument. We consider an isolated stable direction $u$ and initial condition given by the half-plane $\bbH_u$ and a helping set $H$. We observe that on the first step infections only appear in $l_u$. If infections never appear outside $l_u$, we consider a site sufficiently far from $H$ on $l_u$, which becomes infected only using infections in one direction on $l_u$, and propagate infection sequentially from it.

If infections appear outside $l_u$, we first assume $\cK$ is contained in three lines and so is $H$. Then infection is shown to spread sequentially starting from the first site to become infected. If, on the contrary, $\cK$ is more spread out, we use \cref{lem:convex:first:line} to show that a large segment on $l_u$ needs to become infected in order to infect anyone beyond the first line. This segment is sufficient to completely infect $l_u$.
\begin{proof}[Proof of \cref{prop:voracity}]
Let $H$ be a helping set for $u$. That is, a set with $|H|=\alpha(u)=\theta-|\cK\setminus l_u|/2$ such that $|[H\cup\bbH_u]\setminus\bbH_u|=\infty$. By \cref{lem:convex:first:line}, we know that on the first step of the bootstrap percolation dynamics with initial condition $H\cup\bbH_u$, only sites in $l_u$ become infected. Indeed, for $x\in l_u(n)$ with $n\ge 1$ we have \[(x+\cK)\cap (H\cup\bbH_u)\le |H|+|\cK\cap\bbH_u\setminus l_u(-1)|<\theta-|\cK\setminus l_u|/2+|\cK\cap \bbH_u|=\theta.\]

Assume that $[H\cup\bbH_u]\setminus (H\cup \bbH_u)\subseteq l_u$. By symmetry, without loss of generality we may consider a site $y\in l_u\cap[H\cup \bbH_u]$ such that $\<y,u^\perp\>>\max\<h+k,u^\perp\>$ for all $h\in H$ and $k\in \cK$. Further choose $y$ such that no site $z\in l_u$ with $\<z,u^\perp\>>\<y,u^\perp\>$ is infected before $y$. Then there are at least $\alpha(u)$ infected sites in $y+\cK\cap l_u$ before $y$ becomes infected. But then on the next step there are also at least $\alpha(u)$ infected sites in $y+u^\perp+\cK\cap l_u$ (including $y$). Proceeding by induction, we see that for any $m\in\bbZ$ the site $y+mu^\perp$ becomes infected at most $|m|$ steps after $y$, which concludes the proof of the voracity of $u$.

Assume, on the contrary, that some site outside $l_u$ becomes infected. This entails $H\cap l_u=\varnothing$ since otherwise there are at most $\alpha(u)-1<\theta-|\cK\cap\bbH_u|$ sites outside $\bbH_u\cup l_u$. We consider two cases. 

Firstly, assume that $\cK\subset l_u\cup l_u(-1)\cup l_u(1)$ and let $x\in l_u$ be a site infected on the first step. As in the calculation above we need to have $H\subseteq (x+\cK)\setminus \bbH_u$, so $H\subset l_u(1)$. We claim that $x+u^\perp$ becomes infected on the second step or earlier. Indeed, $x\in x+u^\perp+\cK$ and $\cK\cap l_u(1)$ is a discrete interval, so $|(x+u^\perp+\cK)\cap H|\ge |(x+\cK)\cap H|-1=\alpha(u)-1=\theta-|\cK\cap\bbH_u|$. Reasoning similarly by induction, we see that all sites in $(x+\cK)\cap l_u$ become infected. However, they are enough to infect $l_u$ on their own, since the first site in $y\in l_u$ outside $x+\cK$ has at least $(|\cK\cap l_u|-1)/2\ge \alpha(u)$ sites in $(x+\cK)\cap (y+\cK)$, which we already established to be infected.

Secondly, assume that $\cK\cap l_u(n)\neq\varnothing$ for some $n\ge 2$. Observe that by \cref{lem:convex:first:line} this implies that $|\cK\cap l_u(1)|\ge (|\cK\cap l_u|-1)/2\ge \alpha(u)$. Consider the first site $x\not\in l_u$ which becomes infected and let $m\ge1$ be such that $x\in l_u(m)$. Then the number of infected sites in $x+\cK$ just before $x$ is infected is at most $|H|+|\cK\cap\bbH_u|=\theta$. In order to infect $x$ we need to have equality, so all sites in $(x+\cK)\cap l_u(m-1)$ are infected before $x$. By our choice of $x$ this means that $m=1$ and there are at least $\alpha(u)$ consecutive sites infected in $l_u$. As above, this is enough to infect all of $l_u$, concluding the proof.
\end{proof}

\section{Setup}
\label{sec:setup}
We fix an update family $\cU$ for the rest of the paper.
\subsection{Probabilistic tools}
\label{subsec:proba:tools}
An event $E\subseteq\Omega=\{A:A\subset\bbZ^2\}$ is \emph{increasing} if $A\in E$ and $A\subseteq A'$ imply $A'\in E$. Two important correlation inequalities related to increasing events will be used in the article.

The first one is the \emph{Harris inequality} \cite{Harris60} stating that for two increasing events $E,F$,
\begin{equation}
\label{eq:Harris}\P_p(E\cap F)\geq \P_p(E)\P_p(F).\end{equation}
The second one is the \emph{BK inequality} \cite{BK85}. For $E$ and $F$ two increasing events, their \emph{disjoint occurrence} $E\circ F$ is defined as follows. A configuration $A\in\Omega$ belongs to $E\circ F$ if there exists a set $B\subseteq A$ such that $B\in E$ and $A\setminus B\in F$. For $k$ increasing events $E_1,\dots,E_k$, one can define the disjoint occurrence by
\[E_1\circ\cdots\circ E_k=E_1\circ(E_2\circ\cdots(E_{k-1}\circ E_k)),\]
that is, $E_1,\dots,E_k$ admit disjoint witness sets. Then, for any increasing events $E_1,\dots,E_k$ depending on a finite number of sites, the BK inequality reads
\begin{equation}
\label{eq:BK}\P_p(E_1\circ\cdots\circ E_k)\leq \P_p(E_1)\cdots\P_p(E_k).
\end{equation}
We refer the reader to the book \cite{Grimmett99} for proofs of these two classical inequalities.

\subsection{The traversability functions \texorpdfstring{$h^u$}{hu}}
\label{subsec:hu}
The following definition is an extension of the definition of occupied rows and columns for the simple bootstrap percolation, see \cite{Holroyd03}. In words, an occupied line has a helping set sufficient for infection to invade it in direction $u$.
\begin{definition}[Occupied lines]
\label{def:occupied}
For an isolated stable direction $u$, let $\cH^u$ denote the set of helping sets for $u$ (recall \cref{subsec:BP}). A line $l_u(n)$ orthogonal to $u$ is \emph{occupied} in $A\subseteq\bbZ^2$ if there exist $x\in l_u(n)$ and $H\in \cH^u$ such that $x+H\subseteq A$. 
\end{definition}

We call a \emph{rectangle} any translate of the set
\[R^u(m,n)=\big\{x\in \Z^2:0\le \langle x,u^\perp\rangle <  m/\rho_u^2\text{ and }0\le\langle x,u\rangle < n\rho_u\big\}\]
for some $m,n\in\bbN$. With this notation we have that $R^u(m,n)\cap l_u$ contains $m$ sites and $R^u(m,n)\subset\bigcup_{j=0}^{n-1}l_u(j)$, so the rectangle spans $n$ lines. We naturally define $R^u(m,\infty)=\bigcup_{n> 0} R^u(m,n)$. Define the event
\begin{equation}
\label{eq:def:Aumn}
\A^u(m,n)=\bigcap_{j=0}^{n-1}\big\{l_u(j)\text{ is occupied in }A\cap R^u(m,\infty)\big\}.\end{equation}
Note that this event depends on the state of sites in the slightly higher rectangle \begin{equation}
\label{eq:taller:rectangle}R^u(m,n+\max\{\<x,u\>/\rho_u:x\in H,H\in\cH^u\}).\end{equation}

The following proposition studies the behaviour of $\mathbb P_p[\A^u(m,n)]$. In particular, we prove that this probability can be expressed in terms of a family of functions $h^u_p$. They are obtained using various sub-additivity properties corresponding to cutting rectangles into smaller pieces.

\begin{proposition}\label{prop:h}Let $u$ be an isolated stable direction. There exists a constant $V_u>0$ and a family of continuous non-increasing functions $(h_p^u)_{p\in(0,1)}:(0,\infty)\to(0,\infty)$ such that
\begin{enumerate}
\item\label{item:1} \textup{(Link to $\cA^u$)} For any $p\in(0,1)$, $m>V_u$ and $n>0$,
\begin{equation}\label{proposition h}\exp\left(-h_p^u\left(p^{\alpha(u)}m\right)(n+V_u)\right)\leq \P_p(\A^u(m,n)) \leq \exp\left(-h_p^u\left( p^{\alpha(u)}m\right)(n-V_u)\right).\end{equation}

\item\label{item:2} \textup{(Behaviour near 0 and $\infty$)} There exist $p_0,c_{u}>0$ such that for every $p\le p_0$ and  $x\ge p^{\alpha(u)}/c_{u}$, 
\begin{equation}
-c_{u}\log\left(1-e^{-x/c_{u}}\right)\leq h_p^u(x)\leq -\log\left(1-e^{-c_{u}x}\right)\label{estimate}.
\end{equation}
\item\label{item:3} \textup{(Uniform convergence)} There exists a continuous non-increasing integrable function $h^u:\mathbb (0,\infty)\rightarrow(0,\infty)$ such that,  as $p\to 0$, $h^u_p/h^u$ converges to 1 uniformly on $(a,b)$ for every $a,b>0$.
\end{enumerate}
\end{proposition}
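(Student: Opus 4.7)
I will let $g_p(m,n) := -\log \P_p(\cA^u(m,n))$ and fix $V_u$ larger than the $u$-extent $\max\{\langle y,u\rangle/\rho_u: y\in H,\, H\in\cH^u\}$ of every helping set, so that occupation events of two lines separated by at least $V_u$ lines depend on disjoint collections of sites. I will show that $g_p(m,n)/n$ has a limit $\tilde h_p(m)$ in $n$, set $h_p^u(x):=\tilde h_p(\lfloor x p^{-\alpha(u)}\rfloor)$ interpolated to a continuous function, and then read off parts (1)--(3) from elementary estimates on the single-line probability
\[
q(m):=\P_p\bigl(l_u(0)\text{ is occupied in }R^u(m,\infty)\bigr)
\]
together with a Poissonisation in the $p\to 0$ limit.

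For part (1), the event $\cA^u(m,n+n')$ is the intersection of $\cA^u(m,n)$ with a translate of $\cA^u(m,n')$ by $n\rho_u u$; both are increasing, so \eqref{eq:Harris} gives the sub-additive bound $g_p(m,n+n')\le g_p(m,n)+g_p(m,n')$. Conversely, $\cA^u(m,n+n'+V_u)$ is contained in the intersection of $\cA^u(m,n)$ with a translate of $\cA^u(m,n')$ whose defining sites are now disjoint from those of the first (the middle $V_u$ lines having been discarded), hence by plain independence $g_p(m,n+n'+V_u)\ge g_p(m,n)+g_p(m,n')$. Fekete's lemma yields $\tilde h_p(m)=\inf_n g_p(m,n)/n$, and iterating the reversed inequality yields $g_p(m,n)\le (n+V_u)\tilde h_p(m)$. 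Combining gives \eqref{proposition h}, while $x\mapsto h_p^u(x)$ is non-increasing because $\cA^u(m,n)\supseteq \cA^u(m',n)$ whenever $m\ge m'$.

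For part (2), write $x:=mp^{\alpha(u)}$. Packing $\Omega(m)$ pairwise disjoint translates of a fixed helping set inside $l_u(0)\cap R^u(m,\infty)$ gives $q(m)\ge 1-e^{-cx}$; Harris on the $n$ increasing events composing $\cA^u(m,n)$ yields $\P_p(\cA^u(m,n))\ge q(m)^n$, so $h_p^u(x)\le -\log(1-e^{-cx})$. In the other direction, applying FKG to the $O(m)$ decreasing events ``the helping set at position $(i,H)$ is absent'' gives $1-q(m)\ge (1-p^{\alpha(u)})^{C|\cH^u|m}\ge e^{-C'x}$, i.e.\ $q(m)\le 1-e^{-C'x}$; restricting $\cA^u(m,n)$ to the $V_u$-spaced sublattice of lines (on which occupation events are genuinely independent) gives $\P_p(\cA^u(m,n))\le q(m)^{\lfloor n/V_u\rfloor}$ and hence the lower bound on $h_p^u$ after adjusting $c_u$.

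The main obstacle is part (3). By part (2), the family $(h_p^u)_p$ is uniformly bounded on any $[a,b]\subset(0,\infty)$, monotone in $x$, and equicontinuous (via explicit Lipschitz control of $-\log q(m)$ in $x$), so Arzelà--Ascoli yields subsequential uniform limits. The substance is that these limits all coincide, which I would establish by Poissonisation: as $p\to 0$ with $x$ fixed, the random set of admissible helping-set placements in $l_u(0)\cap R^u(m,\infty)$ converges, after $u^\perp$-rescaling by $p^{\alpha(u)}$, to a Poisson point process on $\R$ with an intensity that depends only on $\cH^u$. Because helping sets have $u$-extent at most $V_u$, the joint configuration across consecutive lines is a stationary, finite-range Markov chain, and $h^u(x)$ is the Lyapunov exponent of the associated transfer operator. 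The delicate point is that the pre-limit process is only approximately Poisson (two helping sets may share an infected site, producing short-range correlations), and this is handled by truncating to helping sets contained in a box of diameter $D$, with the residual error absorbed into the $V_u$-term from part (1). Continuity, monotonicity, and integrability of $h^u$ (the last since $-\log(1-e^{-cx})$ is integrable on $(0,\infty)$) descend from part (2) by passage to the limit.
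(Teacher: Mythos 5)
Your parts (1) and (2) track the paper's proof closely: the paper likewise derives (1) from Harris sub-multiplicativity and independence super-multiplicativity (with the $V_u$-offset playing exactly your role) and a sub-additivity lemma, and for (2) it also packs disjoint translates of a fixed helping set (in a brick pattern of $R^u(C,C)$ blocks) and bounds line-occupation probabilities from above. Your small simplification of using Harris on $q(m)^n$ rather than a product over bricks is fine.

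Part (3) is where you diverge, and where your sketch has real gaps. The paper avoids Arzel\`a--Ascoli and Poissonisation entirely: it reduces, via inclusion--exclusion, to showing that $x\mapsto\P_p(\cA^u(xp^{-\alpha(u)},n,E))$ converges uniformly (where $E$ is a fixed set of lines required to be \emph{un}occupied), and then proves this directly by a circular-shift/averaging argument that compares rectangles of width $m$ to those of width $k$, yielding the sandwich
\begin{equation*}
1\ge \frac{1-(\P_p(\cA^u(k,n,E)))^{m/k}}{1-\P_p(\cA^u(m,n,E))}\ge \frac{k-C}{k},
\end{equation*}
followed by a first-order Taylor expansion $\P_p(\cA^u(k,n,E))=1-C'p^{\alpha(u)}+O(p^{\alpha(u)+1})$ and a sub-additivity argument in $k$ to get the limit $e^{-x\kappa}$. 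Your route via a Poisson scaling limit of helping-set placements and a transfer-operator Lyapunov exponent is plausible, and the first-moment calculation does suggest the interactions between overlapping helping sets are lower order (they cost at least one extra infected site). But several of the steps you wave at are precisely where the real work is: (a) you invoke Arzel\`a--Ascoli without actually establishing equicontinuity of $(h_p^u)_p$ uniformly in $p$ --- the Lipschitz control on $-\log q(m)$ only bounds $h_p^u$ up to the additive $V_u$-errors in (1), so equicontinuity needs a separate argument; (b) the ``stationary finite-range Markov chain with transfer operator'' picture has to accommodate the fact that occupation of nearby lines is genuinely correlated through shared infected sites, and identifying the limiting Lyapunov exponent with a unique $h^u(x)$ and showing it is the same for every subsequence is not a one-line consequence of the Poisson limit; (c) the ``truncation to helping sets in a box of diameter $D$, residual error absorbed into the $V_u$-term'' is asserted, not shown. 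None of these is obviously false, but your proposal for (3) is at the level of a research plan rather than a proof, and the paper's inclusion--exclusion plus circular-shift argument is both more elementary and self-contained; you should expect to either reproduce something like it or flesh out the Poisson/transfer-operator machinery substantially before (3) is established.
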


In simple cases, the functions $h^u$ could be computed explicitly. The limit $h^u$ corresponds to the functions $f$ and $g$ in \cite{Holroyd03} and functions $g_k$ in \cite{Holroyd03a}. However, in general, these functions are not explicit. Also note that if $m$ and $n$ are of order $p^{-\alpha(u)}$, then $-p^{\alpha(u)}\log \P_p(\mathcal A^u(m,n))$ remains of order 1 when $p$ goes to 0. This is why $p^{-\alpha(u)}$ is the right scale to consider.

\begin{proof}[Proof of \ref{item:1}] The main ingredient to construct $h^u_p$ is sub- and super-multiplicativity. Fix $m$ large enough so that $\bbP_p(\cA^u(m,n))\in(0,1)$ for all $n>0$ and $p\in(0,1)$. That is, $m$ is chosen so that the rectangle $R^u(m,\infty)$ is wide enough to fit helping sets for any line. Define $v_{p,m}(n)=\P_p(\A^u(m,n))$. 

Observe that $R^u(m,n+n')=R^u(m,n)\sqcup (n\rho_u u+R^u(m,n'))$ for any $n'\in\bbN$ and $n\in \rho_u^{-2}\bbN$, so $v_{p,m}(n)v_{p,m}(n')\le v_{p,m}(n+n')$ by  the Harris inequality \eqref{eq:Harris}. Similarly, recalling \cref{eq:taller:rectangle}, we can choose an integer constant $C>0$ divisible by $1/\rho_u^2$ such that for $n\in \rho_u^{-2}\bbN$ with $n\ge C$ and $n'\in\bbN$, we have $v_{p,m}(n+n')\le v_{p,m}(n-C)v_{p,m}(n')$ by independence. Thus for any $n,n'\in1/\rho_u^2\bbN$,
\[v_{p,m}(n)v_{p,m}(n')\le v_{p,m}(n+n')\le v_{p,m}(n-C)v_{p,m}(n').\]
The sub-additivity lemma and the first inequality imply that there exists $\mu=\mu(u,p,m)\in(0,1]$ such that
$v_{p,m}(n)\le \mu^{n}$ for every $n\in1/\rho_u^2\bbN$ and \begin{equation}
\label{eq:def:mu}
\lim_{n\to\infty}(v_{p,m}(n/\rho_u^2))^{\rho_u^2/n}=\mu.\end{equation}
Moreover, the second inequality entails that for any $k\in\bbN$, $n\in1/\rho_u^2\bbN$,
\[\frac{v_{p,m}((k+1)(n+C))}{\mu^{(k+1)(n+C)}}\le \frac{v_{p,m}(k(n+C))}{\mu^{k(n+C)}}\frac{v_{p,m}(n)}{\mu^{n+C}}\le \left(\frac{v_{p,m}(n)}{\mu^{n+C}}\right)^k,\]
so $v_{p,m}(n)\ge\mu^{n+C}$. Finally, recalling that $v_{p,m}$ is non-increasing by definition, we obtain that for any $n\in\bbN$
\begin{equation}
\label{eq:vpm:bounds}\mu^{C+\lceil \rho_u^2n\rceil/\rho_u^2}\le v_{p,m}(n)\le \mu^{\lfloor \rho_u^2n\rfloor/\rho_u^2}.
\end{equation}
Since $p\neq 1$, this is clearly implies $\mu\neq1$.
For any $m\in \mathbb N$, set $h^u_p(p^{\alpha(u)}m)=-\log \mu$. Extend $h^u_m$ to all $(0,\infty)$ in a piece-wise linear way. Note that $h^u_p$ is non-increasing since $\A^u(m,n)\subseteq \A^u(m+1,n)$ for every $n,m\in\bbN$.
\end{proof}
\begin{figure}
    \centering
\begin{tikzpicture}[line cap=round,line join=round,>=triangle 45,x=0.67cm,y=1cm]
\clip(0,0) rectangle (21,3.2);
\draw (3,0)-- (0,0);
\draw (0,0)-- (0,1);
\draw (0,1)-- (3,1);
\draw (3,1)-- (3,0);
\draw (18,0)-- (15,0);
\draw (15,0)-- (15,1);
\draw (15,1)-- (18,1);
\draw (18,1)-- (18,0);
\draw (6,0.2)-- (3,0.2);
\draw (3,0.2)-- (3,1.2);
\draw (3,1.2)-- (6,1.2);
\draw (6,1.2)-- (6,0.2);
\draw (21,0.2)-- (18,0.2);
\draw (18,0.2)-- (18,1.2);
\draw (18,1.2)-- (21,1.2);
\draw (21,1.2)-- (21,0.2);
\draw (9,0.4)-- (6,0.4);
\draw (6,0.4)-- (6,1.4);
\draw (6,1.4)-- (9,1.4);
\draw (9,1.4)-- (9,0.4);
\draw (12,0.6)-- (9,0.6);
\draw (9,0.6)-- (9,1.6);
\draw (9,1.6)-- (12,1.6);
\draw (12,1.6)-- (12,0.6);
\draw (15,0.8)-- (12,0.8);
\draw (12,0.8)-- (12,1.8);
\draw (12,1.8)-- (15,1.8);
\draw (15,1.8)-- (15,0.8);
\draw (18,1)-- (15,1);
\draw (15,1)-- (15,2);
\draw (15,2)-- (18,2);
\draw (18,2)-- (18,1);
\draw (3,1)-- (0,1);
\draw (0,1)-- (0,2);
\draw (0,2)-- (3,2);
\draw (3,2)-- (3,1);
\draw (3,2)-- (0,2);
\draw (0,2)-- (0,3);
\draw (0,3)-- (3,3);
\draw (3,3)-- (3,2);
\draw (6,1.2)-- (3,1.2);
\draw (3,1.2)-- (3,2.2);
\draw (3,2.2)-- (6,2.2);
\draw (6,2.2)-- (6,1.2);
\draw (6,2.2)-- (3,2.2);
\draw (3,2.2)-- (3,3.2);
\draw (3,3.2)-- (6,3.2);
\draw (6,3.2)-- (6,2.2);
\draw (9,1.4)-- (6,1.4);
\draw (6,1.4)-- (6,2.4);
\draw (6,2.4)-- (9,2.4);
\draw (9,2.4)-- (9,1.4);
\draw (12,1.6)-- (9,1.6);
\draw (9,1.6)-- (9,2.6);
\draw (9,2.6)-- (12,2.6);
\draw (12,2.6)-- (12,1.6);
\draw (15,1.8)-- (12,1.8);
\draw (12,1.8)-- (12,2.8);
\draw (12,2.8)-- (15,2.8);
\draw (15,2.8)-- (15,1.8);
\draw (18,2)-- (15,2);
\draw (15,2)-- (15,3);
\draw (15,3)-- (18,3);
\draw (18,3)-- (18,2);
\draw (9,2.4)-- (6,2.4);
\draw (6,2.4)-- (6,3.4);
\draw (6,3.4)-- (9,3.4);
\draw (9,3.4)-- (9,2.4);
\draw (12,2.6)-- (9,2.6);
\draw (9,2.6)-- (9,3.6);
\draw (9,3.6)-- (12,3.6);
\draw (12,3.6)-- (12,2.6);
\draw (15,2.8)-- (12,2.8);
\draw (12,2.8)-- (12,3.8);
\draw (12,3.8)-- (15,3.8);
\draw (15,3.8)-- (15,2.8);
\draw (18,3)-- (15,3);
\draw (15,3)-- (15,4);
\draw (15,4)-- (18,4);
\draw (18,4)-- (18,3);
\draw (3,3)-- (0,3);
\draw (0,3)-- (0,4);
\draw (0,4)-- (3,4);
\draw (3,4)-- (3,3);
\draw (21,1.2)-- (18,1.2);
\draw (18,1.2)-- (18,2.2);
\draw (18,2.2)-- (21,2.2);
\draw (21,2.2)-- (21,1.2);
\draw (21,2.2)-- (18,2.2);
\draw (18,2.2)-- (18,3.2);
\draw (18,3.2)-- (21,3.2);
\draw (21,3.2)-- (21,2.2);
\draw (1.5,0.5) node{0};
\draw (16.5,0.5) node{0};
\draw (1.5,1.5) node{5};
\draw (16.5,1.5) node{5};
\draw (1.5,2.5) node{10};
\draw (16.5,2.5) node{10};
\draw (4.5,0.7) node{1};
\draw (19.5,0.7) node{1};
\draw (4.5,1.7) node{6};
\draw (19.5,1.7) node{6};
\draw (4.5,2.7) node{11};
\draw (19.5,2.7) node{11};
\draw (7.5,0.9) node{2};
\draw (7.5,1.9) node{7};
\draw (7.5,2.9) node{12};
\draw (10.5,1.1) node{3};
\draw (10.5,2.1) node{8};
\draw (10.5,3.1) node{13};
\draw (13.5,1.3) node{4};
\draw (13.5,2.3) node{9};
\draw (13.5,3.3) node{14};
\end{tikzpicture}
    \caption{The translates $R_{i,j}$ of $R^u(C,C)$ used in the proof of \cref{prop:h}\ref{item:2} in the case $C=5$. 
    In each rectangle, we have indicated for which $i$ it is used to occupy the line $l_u(i)$.}
    \label{fig:bricks}
\end{figure}

\begin{proof}[Proof of \ref{item:2}]
In order to upper bound $h^u_p$, it suffices to consider a particular way of occupying all lines of $R^u(m,n)$ for $m$ large enough. Fix a helping set $H\in\cH^u$ and a positive integer $C\in1/\rho_u^2\bbN$ such that $H\subset R^u(C,C)$. Fix some $v\in l_u(1)$ with $\<u^\perp,v\>\ge C$ and consider the disjoint rectangles $R_{i,j}=(iv+jC\rho_uu)+R^u(C,C)$ for $(i,j)\in\bbZ^2$ (see \cref{fig:bricks}). For each $0\le k<n$, let \[m_k=\left|\left\{(i,j)\in\bbZ^2:R_{i,j}\subset R^u(m,\infty),i+Cj=k\right\}\right|,\]
so that $iv+jC\rho_uu + H\subset R^u(m,\infty)$ can occupy line $l_u(k)$. By construction, for some $c>0$ it holds that for all $m$ large enough and $k\ge 0$, $m_k\ge cm$. Then independence yields
\begin{align*}
\bbP_p(\cA^u(m,n))&{}\ge \prod_{k=0}^{n-1}\left(1-\left(1-p^{|H|}\right)^{m_k}\right)\ge (1-(1-p^{\alpha(u)})^{cm})^n\\
&{}\ge\exp\left(-n\log \left(1- e^{-cmp^{\alpha(u)}}\right)\right).\end{align*}
Recalling \cref{eq:def:mu} and that $h^u_p(p^{\alpha(u)m})=-\log \mu$, we recover the second inequality of \cref{estimate} for $x=p^{\alpha(u)}m$. The inequality for arbitrary $x\ge p^{\alpha(u)}/c_u$ with $c_u>0$ small enough then follows since $h_p^u$ is non-increasing.

Turning to the first inequality in \cref{estimate}, we will see, in \cref{lem:helping}, that one can find $C\in1/\rho_u^2\bbN$ large enough so that for any $H\in\cH^u$, there exists $t\in u^\perp\bbZ$ such that $H+t\subset R^u(C,C)$. Then, if $\A^u(m,n)$ occurs, every rectangle of the form $iC\rho_uu +R^u(m,C)$ contained in $R^u(m,n+C)$ must contain an element of $iC\rho_uu+\cH^u$. Since there are at most $C^{2\alpha(u)}$ possibilities for the helping set up to translation, the Harris inequality \eqref{eq:Harris} gives
\[\mathbb{P}_p(\A^u(m,n))\leq \prod_{i=0}^{\lfloor n/C\rfloor}\left(1-\left(1-p^{\alpha(u)}\right)^{C^{2\alpha(u)}m}\right)\le \left(1-e^{-2m(C^2p)^{\alpha(u)}}\right)^{n/C}.\]
The first inequality of \cref{estimate} then follows as above.
\end{proof}

\begin{proof}[Proof of \ref{item:3}] 
Fix $a<b$. Let us prove that $h^u_p$ converges to some function $h^u$ as $p\rightarrow 0$. From \cref{eq:vpm:bounds} we have that for some $C>0$, any $p$ small enough, $n>C$ and $x\ge Cp^{\alpha(u)}$ with $x\in p^{\alpha(u)}\bbN$,
\[\frac{-\log \mathbb P_p(\A^u(xp^{-\alpha(u)},n))}{n+C}\le  h^u_p(x)\le \frac{-\log \mathbb P_p(\A^u(xp^{-\alpha(u)},n))}{n-C}.\]
Since $h_p^u$ was defined by linear interpolation for $x\not\in p^{\alpha(u)}\bbN$, if we also interpolate $\log\bbP_p[\A^u(xp^{-\alpha(u)},n)]$ linearly, the above inequalities remain valid for any $x\ge a$. It is therefore sufficient to prove that for each fixed $n>0$,
$x\mapsto \mathbb P_p(\A^u(xp^{-\alpha(u)},n))$ converges uniformly on $[a,b]$ as $p\rightarrow 0$ to a limit taking values in $(0,1)$. The fact that the limit cannot be $0$ or $1$ and the integrability of $h^u$ follow from \ref{item:2}, while continuity and monotonicity pass through the uniform convergence.

Fix $n\ge C$. For any $E\subseteq \{0,\dots,n-1\}$, define $\A^u(m,n,E)$ to be the event that lines $l_u(i)$ for $i\in E$ are not occupied in $A\cap R^u(m,\infty)$. Via the inclusion-exclusion principle, it is sufficient to show that $x\mapsto \mathbb P_p(\A^u(xp^{-\alpha(u)},n,E))$ converges uniformly on $[a,b]$ for any fixed $E$.

Fix an integer $k\ge C$. Consider the rectangle $R^u(m,n)$ for $m$ divisible by $k$ and partition it into $R^u(m,n)=\bigsqcup_{i=0}^{m/k-1}iku^\perp+R^u(k,n)$. Next, given a configuration $A\subseteq R^u(m,n)$, let $\tau A=(A+u^\perp)/mu^\perp\bbZ\subseteq R^u(m,n)$, that is, the circular shift of $A$ by $u^\perp$. In particular, $\tau^mA=A$. Observe that, by \cref{eq:def:Aumn} and the definition of $C$, if $A\not\in\cA^u(m,n,E)$, then $A\supseteq H+t$ for some $H\in \cH^u$ with $H\subset R^u(C,C)$ and $t\in \bigcup_{e\in E}l_u(e)$. But then for each $i\in\{0,\dots,m/k-1\}$, at least $k-C$ out of the shifts $(\tau^j(H+t))_{j=1}^m$ are contained in $iku^\perp +R^u(k,n)$. Thus, for at least $(k-C)m/k$ values of $j\in\{1,\dots,m\}$, we have $\tau^jA\not\in\cA^u(m,n)$. Since the rectangles in the partition are disjoint, this yields
\begin{equation}
\label{eq:Au:ratio}1\ge \frac{1-(\bbP_p(\cA^u(k,n,E)))^{m/k}}{1-\bbP_p(\cA^u(m,n,E))}\ge \frac{k-C}{k}.\end{equation}

Recall that by definition, for every $x\ge p^{\alpha(u)}$, we have 
\begin{equation}
\label{eq:Au:discretisation}\bbP_p(\cA^u(\lceil xp^{-\alpha(u)}\rceil,n,E))\le \bbP_p(\cA^u(xp^{-\alpha(u)},n,E))\le \bbP_p(\cA^u(\lfloor xp^{-\alpha(u)}\rfloor,n,E)).\end{equation}
Moreover, $\P_p(\A^u(k,n,E))=1-C' p^{\alpha(u)}+O(p^{\alpha(u)+1})$ as $p\to0$, where $C'=C'(u,k,n,E)$ is the number of possible positions of translates of a helping set violating the event. When $p$ goes to 0, this leads to 
\begin{equation}
\label{eq:Au:limit}
\left(\bbP_p(\cA^u(k,n,E))\right)^{(xp^{-\alpha(u)}+O(1))/k}\to e^{-xC'/k}\end{equation}
uniformly on $x\in[a,b]$. Combining \cref{eq:Au:ratio,eq:Au:discretisation,eq:Au:limit}, we get
\begin{equation}
\label{eq:Au:exp:bounds}
e^{-xC'/k}+o(1)\ge \bbP_p(\cA^u(xp^{-\alpha(u)},n,E)\ge e^{-xC'/k}-\frac Ck-o(1)\end{equation}
with $o(1)$ going to $0$ as $p\to 0$ uniformly on $x\in[a,b]$.

The definition of $C'$ readily leads to the further quasi-additivity \[|C'(u,k_1+k_2,n,E)-C'(u,k_1,n,E)-C'(u,k_2,n,E)|\le nC''(u)\]
for a suitable constant $C''(u)>0$. Then the sub-additivity lemma gives the existence of $\lim_{k\to\infty}C'/k=\kappa=\kappa(u,n,E)\in(0,\infty)$. Therefore, \cref{eq:Au:exp:bounds} entails the uniform convergence
\[\bbP_p\left(\cA^u\left(xp^{-\alpha(u)},n,E\right)\right)\to e^{-x\kappa}.\qedhere\]
\end{proof}

While the event $\cA^u(m,n)$ enjoys good approximate multiplicativity properties, it will be more convenient to work with a slightly more artificial version of it following \cite{Hartarsky24univupper}. To introduce it we will need a few more notions.

\begin{definition}[$W$-helping sets]
\label{def:W:helping}
Let $u$ be an isolated stable direction. One can show \cite{Bollobas15}*{Lemma 5.2} that there exists a positive integer $W_u$ and $U_1,U_2\in\cU$ such that $U_1\cup((W_u+1)u^\perp +U_2)\subset\bbH_u\cup (u^\perp\{1,\dots,W\})$. We will call any set of $W_u$ consecutive sites of the form $x+(u^\perp\{1,\dots,W_u\})$ a {\em $W$-helping set} for $l_u(n)$, if $x\in l_u(n)$.    
\end{definition}
In words, a $W$-helping set is an interval of $W_u$ sites on $l_u$ such that, with the help of $\bbH_u$, it immediately infects the next sites on $l_u$, thus propagating infection along $l_u$. Consequently, if $H$ is a $W$-helping set for $l_u$, then $[\bbH_u\cup H]=l_u\cup\bbH_u$. Voracious directions have the property that any helping set together with a half-plane quickly generates a $W$-helping set as ensured in the next statement, which is essentially due to \cites{Hartarsky20a}.
\begin{lemma}[Helping sets generate $W$-helping sets]
\label{lem:helping}
Fix a voracious isolated stable direction $u$. For every $V_u\in\bbN$ large enough the following holds for any $H\in\cH^u$. There exists $t\in u^\perp\bbZ$ such that $H+t\subset R^u(V_u,V_u)$ and the set of sites $A_{\lfloor \sqrt{V_u}\rfloor}$ infected at time $\lfloor\sqrt{V_u}\rfloor$ with initial infections $A_0=\bbH_u\cup (H+t)$ contains a $W$-helping set $H'$ for $l_u$ with $H'\subset u^\perp\{-\lfloor V_u/2\rfloor,\dots,\lfloor 3V_u/2\rfloor\}$.
\end{lemma}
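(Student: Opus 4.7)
The plan is to combine three ingredients about helping sets for a voracious isolated stable direction $u$: a uniform bound on their diameter up to $u^\perp$-translation, voraciousness forcing $l_u\subset[\bbH_u\cup H]$, and finite-speed propagation of the bootstrap dynamics. Together with a suitable choice of $V_u$, these will yield the conclusion.

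First, I would establish that there is a constant $D=D(u,\cU)$ such that for every $H\in\cH^u$ one can find $t\in u^\perp\bbZ$ with $H+t\subset R^u(D,D)$. This is essentially shown in \cite{Hartarsky20a} and expresses the fact that since $|H|=\alpha(u)$ is minimal, every element of $H$ must play a necessary role in producing the infinite set $[\bbH_u\cup H]\setminus\bbH_u$. For instance, were $H$ splittable into two pieces $H_1\sqcup H_2$ separated by more than the range of $\cU$ in the $u^\perp$-direction, then $[\bbH_u\cup H]=[\bbH_u\cup H_1]\cup[\bbH_u\cup H_2]$, and the infinitude of the left-hand side outside $\bbH_u$ would force one of the $H_i$ (with $|H_i|<\alpha(u)$) to already produce infinitely many infections by itself, contradicting the definition of $\alpha(u)$. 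An analogous argument bounds the $u$-extent of $H$: elements of $H$ too far from $\bbH_u$ cannot interact with the remaining infections in finite time, and can thus be removed without losing the helping-set property.

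Second, once $H+t\subset R^u(D,D)$, voraciousness guarantees $l_u\subset[\bbH_u\cup(H+t)]$, so some $W_u$ consecutive sites on $l_u$ belong to $A_{\tau(H+t)}$ for a finite $\tau(H+t)\in\bbN$. Since the set of translates of helping sets contained in $R^u(D,D)$ is finite modulo $u^\perp\bbZ$, setting $\tau_0:=\sup_{H\in\cH^u}\tau(H+t)<\infty$ gives a uniform time bound. Moreover, by finite-range propagation each bootstrap step enlarges the infected set by at most $r:=\max\{\|x\|_\infty:x\in U,U\in\cU\}$ in every direction beyond $\bbH_u\cup(H+t)$.

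Finally, it remains to choose $V_u$ large enough that $V_u\geq D$, $\lfloor\sqrt{V_u}\rfloor\geq\tau_0$, and $r\lfloor\sqrt{V_u}\rfloor\leq\lfloor V_u/2\rfloor$, all of which hold for $V_u$ sufficiently large. The first condition allows the translation; the second ensures that a $W$-helping set has already formed in $A_{\lfloor\sqrt{V_u}\rfloor}$; the third places this $W$-helping set inside $u^\perp\{-\lfloor V_u/2\rfloor,\dots,\lfloor 3V_u/2\rfloor\}$, concluding the proof. The main obstacle is the diameter bound in step one: while intuitively clear, making it precise requires a careful structural analysis exploiting the minimality of $|H|=\alpha(u)$ together with the bounded-range nature of $\cU$, and this is where the deeper machinery of \cite{Hartarsky20a} enters.
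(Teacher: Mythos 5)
Your proposal follows essentially the same route as the paper: cite \cite{Hartarsky20a} for the uniform translate bound placing $H+t$ in $R^u(V_u,V_u)$, use voracity together with the resulting finiteness of helping sets modulo $u^\perp\bbZ$ to extract a uniform finite time by which a $W$-helping set has formed, and then bound its distance from $H$ by finite-range propagation before taking $V_u$ large. One small point worth making explicit (the paper does): the finite-speed bound on the location of the $W$-helping set relies on $u$ being \emph{stable}, so that $\bbH_u$ alone creates no new infections and everything new must stay within distance $C$ per step of the growing cluster around $H+t$; your phrase ``beyond $\bbH_u\cup(H+t)$'' gestures at this but does not name it, and your illustrative splitting argument for the diameter bound would also need this care, though you rightly defer the full argument to \cite{Hartarsky20a}.
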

\begin{proof}
The fact that for some $V_u$ we can choose $t$ so that $H+t\subset R^u(V_u,V_u)$ was already used in the proof of \cref{prop:h} and does not require voracity. It is proved in \cite{Hartarsky20a}*{Section 2}. Thus, up to the translation vector $t$, there are finitely many helping sets. Since $u$ is voracious, each $H\in\cH^u$ together with $\bbH_u$ generates a $W$-helping set in finite time. Let $T<\infty$ be the maximal such time. Setting $C=\max\{\|u\|:u\in U,U\in\cU\}$, we have that the $W$-helping set is at distance at most $CT$ from $H$, since $u$ is stable. Taking the maximum of $V_u$, $W_u$, $4C^2$ and $T^2$ yields the desired conclusion.
\end{proof}
The next definition is the more technical version of the event $\cA^u(m,n)$ that we will use.
\begin{definition}[Traversability]
\label{def:traversable}
Fix an isolated stable direction $u$ and positive integer $V_u\ge W_u$ such that for any $H\in\cH^u$ there exists $t\in u^\perp\bbZ$ so that $H+t\subseteq R^u(V_u,V_u)$. Further let $m>2V_u$ and $n\ge 1$ be integers. If $n>V_u$, we say that $R^u(m,n)$ is \emph{traversable} in $A$, if $A-V_uu^\perp\in\cA^u(m-2V_u,n-V_u)$ and $A\cap R^u(m,n)$ contains a $W$-helping set for $l_u(n-i)$ for each $i\in\{1,\dots, V_u\}$. If $n\le V_u$, we say that $R^u(m,n)$ is \emph{traversable} in $A$, if there are $W$-helping sets in $A\cap R^u(m,n)$ for $l_u(i)$ for all $i\in\{0,\dots,n-1\}$. Let $\cT(R^u(m,n))$ denote the event that $R^u(m,n)$ is traversable.
\end{definition}
In words, we require helping sets to be far from the boundary of the rectangle (so that \cref{lem:helping} can be used to generate $W$-helping sets before seeing the boundary) and further ask for $W$-helping sets on the last few lines. The use of $W$-helping sets in this definition is that, contrary to helping sets, they are contained in the line they are used for. This way the occurrence of $\cT(R^u(m,n))$ only depends on $A\cap R^u(m,n)$ and it is not hard to check that $[\bbH_u\cup (A\cap R^u(m,n))]\supset R^u(m,n)$ for any $A\in \cT(R^u(m,n))$.

The Harris inequality \eqref{eq:Harris} and \cref{prop:h}\ref{item:1} yield the following.
\begin{corollary}[Traversability probability]
\label{cor:traversability}
For any isolated stable direction $u$, $V_u$ large enough, $m>3V_u$ and $n>V_u$, we have 
\[p^{WV_u}\exp\left(-h^u_p\left(p^{\alpha(u)}(m-2V_u)\right)n\right)\le \bbP_p(\cT(R^u(m,n)))\le \exp\left(-h^u_p\left(p^{\alpha(u)}m\right)(n-2V_u)\right).\]
\end{corollary}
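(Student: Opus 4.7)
The plan is to decompose the traversability event according to the two increasing sub-events given by \cref{def:traversable} and apply \cref{prop:h}\ref{item:1} to each. Concretely, since $n>V_u$, write $\cT(R^u(m,n))=\cE_1\cap\cE_2$, where
\[\cE_1=\{A-V_uu^\perp\in\cA^u(m-2V_u,n-V_u)\}\]
and $\cE_2$ is the event that $A\cap R^u(m,n)$ contains a $W$-helping set for $l_u(n-i)$ for each $i\in\{1,\dots,V_u\}$. Both events are increasing in $A$, which is the key structural input for what follows.

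For the upper bound, I would use $\cT(R^u(m,n))\subseteq\cE_1$ together with translation invariance of $\bbP_p$ to obtain $\bbP_p(\cT(R^u(m,n)))\le\bbP_p(\cA^u(m-2V_u,n-V_u))$. Applying the right-hand inequality of \cref{prop:h}\ref{item:1} with parameters $(m-2V_u,n-V_u)$ then yields the bound $\exp(-h^u_p(p^{\alpha(u)}(m-2V_u))(n-2V_u))$. Since $h^u_p$ is non-increasing and $m-2V_u\le m$, we have $h^u_p(p^{\alpha(u)}(m-2V_u))\ge h^u_p(p^{\alpha(u)}m)$, so replacing $m-2V_u$ by $m$ in its argument only weakens the bound, giving the form in the statement.

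For the lower bound, I would apply the Harris inequality \eqref{eq:Harris} to the two increasing events $\cE_1$ and $\cE_2$ to get $\bbP_p(\cT(R^u(m,n)))\ge\bbP_p(\cE_1)\cdot\bbP_p(\cE_2)$. The left-hand inequality of \cref{prop:h}\ref{item:1} with the same parameters and translation invariance give
\[\bbP_p(\cE_1)\ge\exp\bigl(-h^u_p(p^{\alpha(u)}(m-2V_u))((n-V_u)+V_u)\bigr)=\exp\bigl(-h^u_p(p^{\alpha(u)}(m-2V_u))n\bigr),\]
which already matches the factor in the stated lower bound. For $\cE_2$, I would simply fix one specific $W$-helping set (a prescribed run of $W$ consecutive sites) inside $R^u(m,n)\cap l_u(n-i)$ for each $i\in\{1,\dots,V_u\}$; this is possible because $m>2V_u\ge W$ leaves room on every line, and the resulting $WV_u$ sites are all distinct since they lie on different lines. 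Independence of $\bbP_p$ across sites thus yields $\bbP_p(\cE_2)\ge p^{WV_u}$, and multiplying the two estimates produces the claim.

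There is no genuine obstacle in this argument: once traversability is unpacked into $\cE_1\cap\cE_2$, the proof reduces to one application each of Harris and of the monotonicity of $h^u_p$, together with the bounds already established in \cref{prop:h}\ref{item:1}. The only care needed is bookkeeping around the $V_uu^\perp$ shift and the asymmetric appearance of $m$ versus $m-2V_u$ on the two sides of the target inequality, both of which are resolved by the monotonicity remark above.
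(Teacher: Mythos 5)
Your argument is correct and is exactly what the paper intends: the paper does not spell out a proof of this corollary, saying only that it follows from the Harris inequality and \cref{prop:h}\ref{item:1}, and your decomposition $\cT(R^u(m,n))=\cE_1\cap\cE_2$, the translation‑invariance step, the Harris inequality for the lower bound, and the monotonicity of $h^u_p$ for the upper bound fill in those details in the natural way. (Minor bookkeeping: the hypothesis is $m>3V_u$, not $2V_u$, and $V_u\ge W_u$ rather than $V_u\ge W$ is what the definition gives, but either way there is ample room for a $W$-helping set on each line, and $p^{W_uV_u}\ge p^{WV_u}$ since $W=\max_v W_v\ge W_u$, so the stated bound follows.)
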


\subsection{Droplets}
\label{subsec:droplets}
We henceforth assume that $\cU$ is not one-dimensional, that is, there does not exist $u\in\bbZ^2$ such that $U\subset u\bbZ$ for all $U\in\cU$. While helping sets are defined with an infected half-plane in mind, we will systematically have only a finite infected region at our disposal. Our next goal is to define the appropriate geometry for such regions, respecting the update family $\cU$.

We need to consider a particular set of directions related to the update family known as \emph{quasi-stable directions} \cite{Bollobas15}. Namely, let 
\begin{equation}
\label{eq:def:S}
\cS=\left\{u\in S^1:\exists U\in\cU,\exists x\in U:\<x,u\>=0\right\}.
\end{equation}
Note that quasi-stable directions are necessarily rational. We index them $u_1,\dots,u_{|\cS|}$ in counterclockwise order and indices are considered modulo $|\cS|$. Since we will often consider sequences of numbers indexed by $\cS$, we denote by $\be_u$ the canonical basis of $\bbR^\cS$ and use bold letters for vectors in this space. When $\cU$ is isotropic with difficulty $\alpha$ (recall \cref{eq:def:alpha:global}), the set 
\begin{equation}
\label{eq:def:Sa}
\cS_\alpha=\left\{u\in S^1:\alpha(u)=\alpha\right\}\subseteq \cS
\end{equation}
of isolated stable directions of maximal difficulty is also of particular importance. As it will be convenient to work with continuous regions, for $a\in\bbR$, we further set \[\bbH_u(a)=\{x\in\bbR^2:\<x,u\><a\rho_u\}.\] However, whenever referring to the bootstrap percolation process with an initial condition contained in $\bbR^2$, we will mean its intersection with $\bbZ^2$.
\begin{definition}[Droplet]
\label{def:droplet}A \emph{droplet} $D$ is a non-empty set of the form $D=D[{\bf a}]=\bigcap_{u\in \cS} \bbH_u(a_u)$ where ${\bf a}\in \bbR^\cS$ (see \cref{fig:droplet}). Given a droplet $D$, its \emph{radii} $\ba\in\bbR^\cS$ are given by $a_u=\sup_{d\in D}\<d,u/\rho_u\>$, so that $D=D[\ba]$ and we systematically assume sequences defining droplets to be chosen this way. We similarly define $\cS_\alpha$-droplets, replacing $\cS$ by $\cS_\alpha$ and similarly for all subsequent notions involving droplets.
\end{definition}
For $u\in \cS$, define the {\em edge} \[E_u(D[\ba])=\{x\in\bbR^2:\<x,u\>=a_u\rho_u,\forall v\in\cS\setminus\{u\},\<x,v\><a_v\rho_v\},\]
that is, the $u$-side of the polygon $D[\ba]$. Note that $E_u(D)\cap D=\varnothing$ for any droplet $D$. The \emph{dimension} ${\bf m}\in [0,\infty)^\cS$ of $D$ is given by $m_u=|E_u(D)|\rho_u$ for every $u\in \cS$, where $|E_u(D)|$ is the Euclidean length of the edge. The \emph{perimeter} of $D$ with dimension $\bm$ is defined as 
\[\Phi(D)=\sum_{u\in \cS} m_u.\] 
We will require a notion of ``circular'' droplet. For $k\in[0,\infty)$, let $D[k]$ be the symmetric droplet with dimension $(k,\dots,k)\in[0,\infty)^\cS$ (not to be confused with radii). In order to construct $D[k]$, set $x_1=0$ and $x_{i+1}=x_i-ku_i^\perp$. Since $\cS$ is symmetric, we obtain $x_{|\cS|+1}=x_0$ and $D[k]$ is constructed as the polygon with vertices $(x_i)_{i=1}^{|\cS|}$ translated appropriately.

The \emph{location} of $D_1=D_1[{\bf a}]\subseteq D_2=D_2[{\bf b}]$ is given by ${\bf s}=\bb-\ba\in \mathbb [0,\infty)^\cS$. 
The \emph{total location} $\Psi(D_1,D_2)$ is defined by 
\[\Psi(D_1,D_2)=\sum_{u\in \cS}s_u.\]
Note that $\Psi(D_1,D_2)$ does not depend on the positions of $D_1$ and $D_2$, but just on their shapes. 

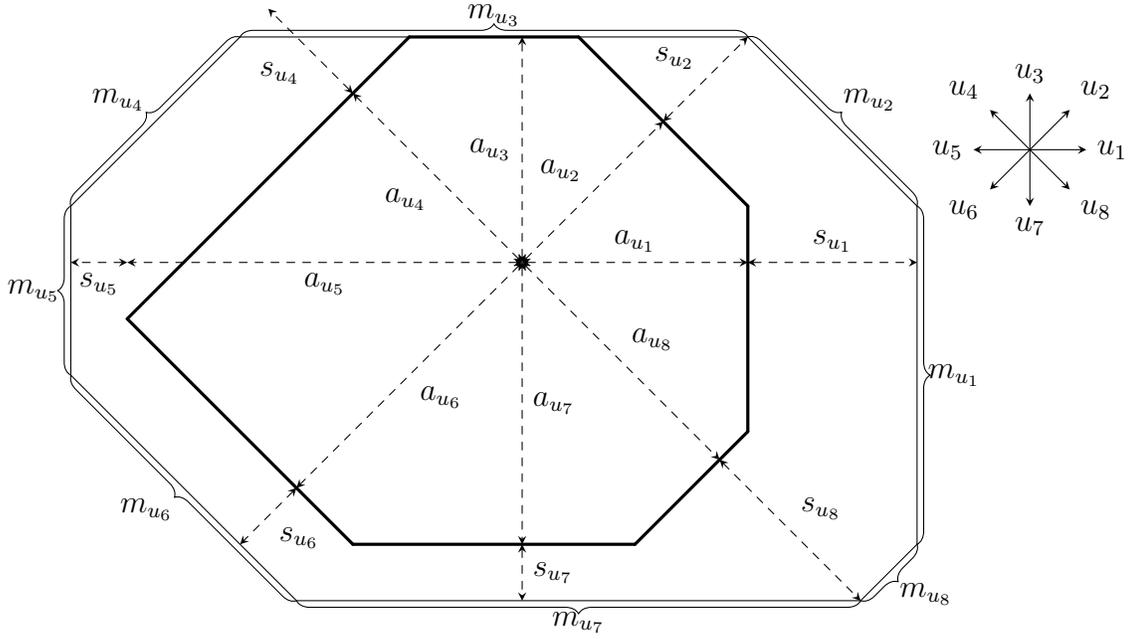
\begin{figure}
	\centering
\begin{tikzpicture}[line cap=round,line join=round,>=stealth,x=0.75cm,y=0.75cm]
\begin{scope}[shift={(9,2)}]
\draw [->] (0,0) -- (1,0) node[right]{$u_1$};
\draw [->] (0,0) -- (0.707,0.707) node[above right]{$u_2$};
\draw [->] (0,0) -- (0,1) node[above]{$u_3$};
\draw [->] (0,0) -- (-0.707,0.707) node[above left]{$u_4$};
\draw [->] (0,0) -- (-1,0) node[left]{$u_5$};
\draw [->] (0,0) -- (-0.707,-0.707) node[below left]{$u_6$};
\draw [->] (0,0) -- (0,-1) node[below]{$u_7$};
\draw [->] (0,0) -- (0.707,-0.707) node[below right]{$u_8$};
\end{scope}
\draw[very thick] (-3,-5)-- (-7,-1);
\draw[very thick] (-7,-1)-- (-2,4);
\draw[very thick] (-2,4)-- (1,4);
\draw[very thick] (1,4)-- (4,1);
\draw[very thick] (4,1)-- (4,-3);
\draw[very thick] (4,-3)-- (2,-5);
\draw[very thick] (2,-5)-- (-3,-5);
\draw (6,-6)-- (7,-5);
\draw[decorate,decoration={brace,amplitude=5pt}] (7,-5) -- (6,-6) node [midway,below right] {$m_{u_8}$};
\draw (7,-5)-- (7,1);
\draw[decorate,decoration={brace,amplitude=5pt}] (7,1) -- (7,-5) node [midway,right] {$m_{u_1}$};
\draw (7,1)-- (4,4);
\draw[decorate,decoration={brace,amplitude=5pt}] (4,4)--(7,1) node [midway,above right] {$m_{u_2}$};
\draw (4,4)-- (-5,4);
\draw[decorate,decoration={brace,amplitude=5pt}] (-5,4)--(4,4) node [midway,above] {$m_{u_3}$};
\draw (-5,4)-- (-8,1);
\draw[decorate,decoration={brace,amplitude=5pt}] (-8,1)--(-5,4) node [midway,above left] {$m_{u_4}$};
\draw (-8,1)-- (-8,-2);
\draw[decorate,decoration={brace,amplitude=5pt}] (-8,-2)--(-8,1) node [midway,left] {$m_{u_5}$};
\draw (-8,-2)-- (-4,-6);
\draw[decorate,decoration={brace,amplitude=5pt}] (-4,-6)--(-8,-2) node [midway,below left] {$m_{u_6}$};
\draw (6,-6)-- (-4,-6);
\draw[decorate,decoration={brace,amplitude=5pt}] (6,-6)--(-4,-6) node [midway,below] {$m_{u_7}$};
\draw [dashed,<->] (0,0)-- (0,-5) node[midway,right]{$a_{u_7}$};
\draw [dashed,<->] (0,-5)-- (0,-6) node[midway,right]{$s_{u_7}$};
\draw [dashed,<->] (0,0)-- (-4,-4) node[midway,below right]{$a_{u_6}$};
\draw [dashed,<->] (-4,-4)-- (-5,-5) node[midway,below right]{$s_{u_6}$};
\draw [dashed,<->] (0,0)-- (-7,0) node[midway,below]{$a_{u_5}$};
\draw [dashed,<->] (-7,0)-- (-8,0) node[midway,below]{$s_{u_5}$};
\draw [dashed,<->] (0,0)-- (-3,3) node[midway,below left]{$a_{u_4}$};
\draw [dashed,<->] (-3,3)-- (-4.5,4.5) node[midway,below left]{$s_{u_4}$};
\draw [dashed,<->] (0,0)-- (0,4) node[midway,left]{$a_{u_3}$};
\draw [dashed,<->] (0,0)-- (2.5,2.5) node[midway,above left]{$a_{u_2}$};
\draw [dashed,<->] (2.5,2.5)-- (4,4) node[midway,above left]{$s_{u_2}$};
\draw [dashed,<->] (0,0)-- (4,0) node[midway,above]{$a_{u_1}$};
\draw [dashed,<->] (4,0)-- (7,0) node[midway,above]{$s_{u_1}$};
\draw [dashed,<->] (0,0)-- (3.5,-3.5) node[midway,above right]{$a_{u_8}$};
\draw [dashed,<->] (3.5,-3.5)-- (6,-6) node[midway,above right]{$s_{u_8}$};
\end{tikzpicture}
	\caption{An example of two droplets $D[\ba]\subseteq D[\bb]$ with $|\cS|=8$. The radii $\ba\in\bbR^{\cS}$, the location $\bs=\bb-\ba$ and the dimension $\bm$ of $D[\bb]$ are indicated. Note that $s_{u_3}$ is not drawn, since it is 0 in this instance. Further note that $a_{u}$ and $s_{u}$ are measured in units of $\rho_u$, while $m_{u}$ is measured in units of $1/\rho_u$ for every $u\in\cS$.}
	\label{fig:droplet}
\end{figure}

Not every ${\bf m}\in \bbR^{\cS}$ necessarily corresponds to the dimension of a droplet. Yet it is easy to verify that if ${\bf m}$ and ${\bf m'}$ are the dimensions of two droplets $D$ and $D'$, then there exists a droplet with dimensions ${\bf m}+{\bf m'}$. In fact it is given by the Minkowski sum of the droplets
\begin{equation}
\label{eq:def:sum}D[\ba]+D[\bb]:=D[\ba+\bb]=\{x+y:x\in D[\ba],y\in D[\bb]\}.
\end{equation}
For any $z>0$ and droplet $D$ we denote $D^z=D+D[z]$. \Cref{eq:def:sum} immediately entails the following important property of sums that will be used frequently.
\begin{observation}\label{loc}Let $D_1\subseteq D_2$ and $D$ be droplets. The location of $D_1+D\subseteq D_2+D$ is equal to the location of $D_1\subseteq D_2$.\end{observation}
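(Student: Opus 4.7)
The plan is to unwind the definitions directly. Write $D_1 = D[\ba]$, $D_2 = D[\bb]$ and $D = D[\bc]$, so that by hypothesis the location of $D_1 \subseteq D_2$ is $\bs = \bb - \ba \in [0,\infty)^\cS$, and in particular $\ba \le \bb$ componentwise. By \cref{eq:def:sum}, the Minkowski sums are again droplets, given by $D_1 + D = D[\ba + \bc]$ and $D_2 + D = D[\bb + \bc]$. The inclusion $D_1 + D \subseteq D_2 + D$ then follows from $\ba + \bc \le \bb + \bc$ together with the obvious monotonicity of the map $\ba\mapsto D[\ba]$, and the new location reads off as $(\bb + \bc) - (\ba + \bc) = \bb - \ba = \bs$, which is exactly what we want.

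The one point worth a moment of care is verifying that the sequences $\ba + \bc$ and $\bb + \bc$ truly are the canonical radii in the sense of \cref{def:droplet}, i.e.\ that $(\ba + \bc)_u = \sup_{d \in D_1 + D} \langle d, u/\rho_u\rangle$ for every $u\in\cS$, and similarly for $D_2 + D$. This is the classical additivity of the support function under Minkowski addition: for any non-empty bounded sets $A, B \subseteq \bbR^2$ and any $v \in \bbR^2$, $\sup_{z \in A + B} \langle z, v\rangle = \sup_{x \in A} \langle x, v\rangle + \sup_{y \in B} \langle y, v\rangle$, which follows in one line from the definition of the set sum.

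There is essentially no genuine obstacle here: the statement is a direct translation of the fact that radii add under Minkowski sum, as already built into \cref{eq:def:sum}. The only conceptual point to keep in mind is that the location of an inclusion of droplets is defined intrinsically, through the radii rather than through absolute positions, and it is precisely this translation-invariant definition that makes the location unchanged after adding the common summand $D$ to both sides.
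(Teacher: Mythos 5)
Your proof is correct and follows essentially the same argument as the paper: write the three droplets via their radii, invoke \cref{eq:def:sum} to identify $D_1+D=D[\ba+\bc]$ and $D_2+D=D[\bb+\bc]$, and subtract. The extra remark about additivity of the support function is a welcome bit of care that the paper leaves implicit in \cref{eq:def:sum}, but it does not change the argument.
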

\begin{proof}
Let the radii of $D_1,D_2,D$ be $\ba,\bb,\bc$ respectively. Then, the location of $D_1+D=D[\ba+\bc]\subseteq D[\bb+\bc]=D_2+D$ is $(\bb+\bc)-(\ba+\bc)=\bb-\ba$, which is the location of $D_1=D[\ba]\subseteq D[\bb]=D_2$.
\end{proof}

We will require a further operation on droplets.
\begin{definition}[Span of droplets]
\label{def:span}The \emph{span} of droplets $D_1,\dots,D_k$ denoted by $D_1\vee\dots\vee D_k$ is the smallest droplet containing $\bigcup_{i=1}^k D_i$.
\end{definition}
The following important property follows directly from \cref{def:span,eq:def:sum}: one has that $D[\ba_1]\vee\dots\vee D[\ba_k]=D[\ba^{(1)}\vee\dots\vee\ba^{(k)}]$ with $\ba^{(1)}\vee\dots\vee\ba^{(k)}=(\max_{i=1}^ka^{(i)}_u)_{u\in\cS}$. 

\subsection{The sharp threshold constant \texorpdfstring{$\lambda$}{lambda}}
\label{subsec:W}
In the sequel we assume that $\cU$ is isotropic with difficulty $\alpha$. We are now in position to define a functional depending on two droplets, which will quantify the cost of the smaller one growing to become the larger one.
\begin{definition}
\label{def:W}
For two droplets $D\subseteq D'$ with location $\bs$ and such that the dimension of $D$ is $\bm$, let\footnote{\label{foot:degenerate}Here we make the convention that $h^u(0)=h_p^u(0)=\infty$, but if $m_u=s_u=0$, then $h^u(m_u)s_u=h_p^u(p^\alpha m_u)s_u=0$ for any $u\in\cS$ and $p\in(0,1)$.}
\begin{align*}
W_p(D,D')&{}=p^{\alpha}\sum_{u\in \cS_\alpha} h^u_p\left(p^{\alpha}m_u\right)s_u,\\
W(D,D')&{}=\sum_{u\in \cS_\alpha} h^u(m_u)s_u,
\end{align*}
where $h^u_p$ and $h^u$ are defined in \cref{prop:h}. Let $\mathfrak{D}$ be the set of bi-infinite non-decreasing (for inclusion) sequences of droplets $(D_n)_{n\in\mathbb Z}$ such that $\bigcap_{n\in\bbZ}D_n=\{0\}$ and $\bigcup_{n\in\bbZ}D_n=\bbR^2$. For a sequence $\cD=(D_n)_{n\in\bbZ}\in\mathfrak D$, set
\[\cW(\cD)=\frac12\sum_{n\in\bbZ}W(D_n,D_{n+1}).\]
Finally, the sharp threshold constant is given by
\[\lambda=\inf_{\cD\in\mathfrak D}\cW(\cD).\]
We analogously define $\mathfrak D_\alpha$ for $\cS_\alpha$-droplets and set $\lambda_\alpha=\inf_{\cD\in\mathfrak D_\alpha}\cW(\cD)$.
\end{definition}

Let us emphasise that even though droplets are defined with respect to $\cS$, only directions in $\cS_\alpha$ are featured in $W_p$ and $W$. As we will see, this will entail that $\lambda_\alpha=\lambda$. Also note that $W_p$ is simply the rescaled version of $W$, taking into account the scaling from \cref{prop:h} for directions in $\cS_\alpha$.

The definition of $\lambda$ as the minimizer of an energy functional is a typical feature of metastability phenomena. Since the creation of a droplet of critical size is very unlikely, the procedure to create it tends to minimize the energy. Here, the energy takes the special form of work along a certain sequence of droplets. The sequence along which the work is minimized is therefore related to the typical shape of a critical droplet.

\begin{proposition}\label{base lambda}
The constant $\lambda$ belongs to $(0,\infty)$.
\end{proposition}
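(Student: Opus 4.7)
The plan is to prove the two bounds $\lambda<\infty$ and $\lambda>0$ separately. For the upper bound, I would exhibit an explicit sequence of finite energy. Fix any bounded droplet $D_0$ containing $0$ in its interior (for instance a suitable translate of $D[1]$), and set $D_n=2^nD_0=\{2^nx:x\in D_0\}$. Convexity of $D_0$ and $0\in D_0$ yield $D_n\subseteq D_{n+1}$, while $\bigcap_n D_n=\{0\}$ and $\bigcup_n D_n=\bbR^2$ follow from boundedness of $D_0$, so $\cD=(D_n)\in\mathfrak D$. Radii, locations and dimensions all scale linearly under the dilation, giving $s_u^{(n)}=2^na_u^{(0)}$ and $m_u^{(n)}=2^nm_u^{(0)}$, hence
\[
\cW(\cD)=\frac{1}{2}\sum_{u\in\cS_\alpha}a_u^{(0)}\sum_{n\in\bbZ}2^nh^u\bigl(2^nm_u^{(0)}\bigr).
\]
For each $u\in\cS_\alpha$ the inner sum is controlled, by monotonicity of $h^u$ and a standard dyadic-versus-integral comparison, by $(2/m_u^{(0)})\int_0^\infty h^u(y)\,dy<\infty$, the integrability coming from \cref{prop:h}\ref{item:3}. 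Choosing $D_0$ with $m_u^{(0)}>0$ for every $u\in\cS_\alpha$ (a translate of $D[1]$ works) thus gives $\cW(\cD)<\infty$.

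For the lower bound, fix any $\cD=(D_n)\in\mathfrak D$ with radii $\ba^{(n)}$ and set $r(n)=\max_{v\in\cS}a_v^{(n)}$. Two geometric inputs are needed. First, there exists $C_1>0$ depending only on $\cS$ such that every droplet $D[\ba]$ satisfies $m_u\le C_1\max_va_v$ for all $u\in\cS$; this follows by solving the $2\times 2$ linear systems defining the vertices of $D[\ba]$ as intersections of lines indexed by adjacent quasi-stable directions, together with the fact that the angles between consecutive directions in $\cS$ are uniformly bounded away from $0$. Second, by isotropy every open semicircle of $S^1$ contains a direction of $\cS_\alpha$, and by compactness this yields $\delta>0$ such that $\max_{u\in\cS_\alpha}\<u,w\>\ge\delta$ for every $w\in S^1$; applied to $w=x_0/|x_0|$ where $x_0\in\overline{D[\ba]}$ realises the maximal radius $r=a_{v_0}$ in some direction $v_0$, this provides some $u_0\in\cS_\alpha$ with $a_{u_0}\ge c_1r$ for a constant $c_1>0$ depending only on $\cS$ and $\cS_\alpha$.

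Now fix any $R>0$ and let $N=\max\{n\in\bbZ:r(n)\le R\}$, well defined since $r$ is non-decreasing with $r(n)\to 0$ as $n\to-\infty$ and $r(n)\to+\infty$ as $n\to+\infty$. For every $n\le N$ and $u\in\cS_\alpha$, the first input gives $m_u^{(n)}\le C_1R$, so $h^u(m_u^{(n)})\ge h^u(C_1R)>0$, the strict positivity coming from taking the $p\to 0$ limit of the lower bound in \cref{prop:h}\ref{item:2}. The second input applied at step $N+1$ provides some $u_0\in\cS_\alpha$ with $a_{u_0}^{(N+1)}\ge c_1r(N+1)>c_1R$; since $a_{u_0}^{(n)}\to 0$ as $n\to-\infty$, telescoping yields $\sum_{n\le N}s_{u_0}^{(n)}=a_{u_0}^{(N+1)}$. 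Retaining only this contribution,
\[
\cW(\cD)\ge\frac{1}{2}\sum_{n\le N}h^{u_0}\bigl(m_{u_0}^{(n)}\bigr)s_{u_0}^{(n)}\ge\frac{c_1R}{2}\min_{u\in\cS_\alpha}h^u(C_1R)>0,
\]
a lower bound independent of $\cD$, whence $\lambda>0$. The main subtle point is the second geometric input, where isotropy is essential to prevent the $\cS_\alpha$-radii from being negligible compared to the overall size of the droplet; the rest is book-keeping with the properties of $h^u$ from \cref{prop:h}.
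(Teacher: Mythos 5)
Your proof is correct and follows essentially the same approach as the paper: a dyadic dilation sequence for finiteness, and for positivity, truncating a general sequence where an $\cS_\alpha$-radius first exceeds a fixed threshold and retaining the telescoping contribution of that single direction. The only cosmetic differences are that you invoke integrability of $h^u$ from \cref{prop:h}\ref{item:3} (via a dyadic-versus-integral comparison) where the paper plugs in the explicit bound of \cref{prop:h}\ref{item:2}, and that you split the key geometric inequality $\max_{u\in\cS}m_u\le c\max_{u\in\cS_\alpha}a_u$ into the two simpler observations that dimensions are controlled by the overall size and that, by isotropy, the overall size is comparable to the largest $\cS_\alpha$-radius.
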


\begin{proof}
Let us first show that $\lambda>0$. Observe that there exists a constant $c>0$ such that for any sequence of radii $\ba$ and corresponding dimension $\bm$, we have $\max_{u\in\cS}m_u\le c\max_{u\in\cS_\alpha}a_u$, since there are directions of difficulty $\alpha$ in every semicircle. Consider a sequence of droplets $D_n=D[\ba^{(n)}]$ as in \cref{def:W}. Let $n_0$ be the smallest integer such that $\max_{u\in\cS_\alpha} a^{(n_0)}_u\ge B$ for some fixed constant $B>0$ and let $u_0\in\cS_\alpha$ be such that $a^{(n_0)}_{u_0}=\max_{\cS_\alpha}a^{(n_0)}_u$. Then 
\[\sum_{n=-\infty}^{n_0-1} W(D_n,D_{n+1})\ge h^{u_0}\left(m_{u_0}^{(n_0-1)}\right)\sum_{n=-\infty}^{n_0-1}s_{u_0}^{(n)}=h^{u_0}\left(m_{u_0}^{(n_0-1)}\right) a_{u_0}^{(n_0)}\ge h^{u_0}(cB)B>0,\]
since $h^{u_0}$ is non-increasing and positive by \cref{prop:h}.

Turning to $\lambda<\infty$, consider the sequence $\cD=(D[2^n])_{n\in\bbZ}$ and let $D[1]=D[\ba]$. For some constant $c>0$, its energy is given by
\begin{align}
\nonumber\cW(\cD)&{}=\sum_{n\in\bbZ} W(D[2^n],D[2^{n+1}])= \sum_{u\in\cS_\alpha}\sum_{n\in\bbZ}h^u(2^n)2^na_u\\
&{}\le \frac{-1}{c}\sum_{n\in\bbZ}\log\left(1-e^{-c2^n}\right)2^n<\infty,
\label{eq:powers}
\end{align}
using \cref{prop:h}\ref{item:2}.
\end{proof}

\begin{proposition}
\label{prop:lambda:alpha}
We have $\lambda=\lambda_\alpha$.
\end{proposition}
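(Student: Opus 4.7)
The plan is to prove the two inequalities $\lambda_\alpha\le\lambda$ and $\lambda\le\lambda_\alpha$ separately, by exhibiting explicit projection and lifting maps between $\mathfrak D$ and $\mathfrak D_\alpha$. The driving observation is that $W(D,D')=\sum_{u\in\cS_\alpha}h^u(m_u)s_u$ involves only $\cS_\alpha$-quantities, so matching the $\cS_\alpha$-dimensions and $\cS_\alpha$-locations between the two classes of sequences matches (or at least orders) the energies.

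For $\lambda_\alpha\le\lambda$: given $\cD=(D_n)\in\mathfrak D$ with $D_n=D[\ba^{(n)}]$, I would project to $\tilde D_n=\bigcap_{u\in\cS_\alpha}\bbH_u(a_u^{(n)})$, keeping the $\cS_\alpha$-radii and dropping the non-$\cS_\alpha$ constraints. Nestedness $\tilde D_n\subseteq\tilde D_{n+1}$ and the inclusion $\tilde D_n\supseteq D_n$ are immediate. For $u\in\cS_\alpha$, the $u$-edge of $D_n$ equals the $u$-edge of $\tilde D_n$ intersected with the non-$\cS_\alpha$ open half-planes, so $m_u\le\tilde m_u$; combined with equal $\cS_\alpha$-locations and monotonicity of $h^u$ (from \cref{prop:h}), this gives $W(\tilde D_n,\tilde D_{n+1})\le W(D_n,D_{n+1})$. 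To conclude $\tilde\cD\in\mathfrak D_\alpha$, the only nontrivial point is $\bigcap_n\tilde D_n=\{0\}$: this follows from isotropy, which ensures that $\cS_\alpha$ meets every open semicircle and hence $\bigcap_{u\in\cS_\alpha}\{x\in\bbR^2:\<x,u\>\le 0\}=\{0\}$.

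For $\lambda\le\lambda_\alpha$: given $\cD_\alpha=(D^\alpha_n)\in\mathfrak D_\alpha$ with radii $\tilde\ba^{(n)}$, I would lift to $\cS$-droplets $D_n=D[\ba^{(n)}]$ by setting $a_u^{(n)}=\tilde a_u^{(n)}$ for $u\in\cS_\alpha$ and $a_u^{(n)}=\sup_{x\in D^\alpha_n}\<x,u/\rho_u\>$ for $u\in\cS\setminus\cS_\alpha$. The key geometric point is that the edges of the $\cS_\alpha$-polygon $D^\alpha_n$ have outer normals only in $\cS_\alpha$, so for any $u\in\cS\setminus\cS_\alpha$ the support in direction $u$ is attained at a single vertex of $D^\alpha_n$. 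Consequently the added half-plane is tangent there, $D_n=D^\alpha_n$ as sets, the $\cS_\alpha$-edges of $D_n$ coincide with those of $D^\alpha_n$, and the non-$\cS_\alpha$-edges are empty (hence irrelevant to $W$). Matching $\cS_\alpha$-dimensions and locations yield $W(D_n,D_{n+1})=W(D^\alpha_n,D^\alpha_{n+1})$, and $(D_n)\in\mathfrak D$ since the underlying sets coincide with those of $\cD_\alpha$.

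The step requiring the most care is the ``single vertex'' geometric observation above, which underlies both the equality $m_u=\tilde m_u$ for $u\in\cS_\alpha$ in the lifting direction and the subset-of-edges in the projection direction; note that it requires no symmetry assumption, since the outer normals of the edges of an $\cS_\alpha$-polygon lie in $\cS_\alpha$ by definition. Everything else reduces to a direct manipulation of definitions.
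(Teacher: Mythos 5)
Your proof is correct and follows essentially the same route as the paper: for $\lambda_\alpha\le\lambda$ you replace each $D_n$ by the smallest $\cS_\alpha$-droplet containing it (which is exactly your projection $\tilde D_n$, since keeping the $\cS_\alpha$-radii and dropping the other constraints yields precisely that droplet) and use monotonicity of $h^u$ together with preservation of $\cS_\alpha$-locations; for $\lambda\le\lambda_\alpha$ you view an $\cS_\alpha$-droplet as an $\cS$-droplet with degenerate non-$\cS_\alpha$ edges. The only difference is one of exposition: the paper dispatches the $\lambda\le\lambda_\alpha$ direction in a single sentence (``dimensions $m_u$ are zero for $u\in\cS\setminus\cS_\alpha$''), whereas you supply the geometric justification—that the support in a non-$\cS_\alpha$ direction is attained at a vertex because all edges of an $\cS_\alpha$-polygon have outer normals in $\cS_\alpha$—which is the right thing to check and is appreciated.
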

\begin{proof}
Considering $\cS_\alpha$-droplets as droplets whose dimensions $m_u$ are zero for $u\in\cS\setminus\cS_\alpha$ (like the $u_5$-dimension of $D[\ba]$ in \cref{fig:droplet}), it is clear that $\lambda\le\lambda_\alpha$, so it remains to prove the reverse inequality. Fix $\varepsilon>0$ and let $\cD=(D_n)_{n\in\bbZ}\in\mathfrak D$ be such that $\cW(\cD)\le \lambda+\varepsilon$. For each $n\in\bbZ$, let $D'_n$ be the smallest $\cS_\alpha$-droplet containing $D_n$. Observe that for each $n\in\bbZ$ and $u\in\cS_\alpha$ we have $m_u^{(n)}\le {m'_u}^{(n)}$ and ${s_u'}^{(n)}=s_u^{(n)}$, since $\cS\supseteq\cS_\alpha$, where $\bm^{(n)}$ is the dimension of $D_n$ and $\bs^{(n)}$ is the location of $D_n\subseteq D_{n+1}$ and similarly for ${\bm'}^{(n)}$ and ${\bs'}^{(n)}$. Therefore, setting $\cD'=(D'_n)_{n\in\bbZ}$, we get $\cW(\cD')\le \cW(\cD)=\lambda+\varepsilon$, since the functions $h^u$ are non-increasing. Thus, it remains to check that $\cD'\in\mathfrak D_\alpha$. But this is clear: $D'_n\supseteq D_n\to\bbR^2$ as $n\to\infty$ and $D'_n\to\{0\}$ as $n\to-\infty$ since the same holds for $D_n$. Hence, $\lambda_\alpha\le \cW(\cD')\le\lambda+\varepsilon$ for any $\varepsilon>0$ and we are done.
\end{proof}

\subsection{Constants}
In the subsequent sections we will require a number of large and small quantities that will depend on each other. In order to simplify statements and for convenience, we gather them here. First, fix the update family $\cU$, $\alpha$ from \cref{eq:def:alpha:global}, $\cS$ from \cref{eq:def:S}, $\cS_\alpha$ from \cref{eq:def:Sa} and $\lambda$ from \cref{subsec:W} once and for all and allow all other constants to depend on them. Recall the constants $V_u$ and $c_u$ defined in \cref{prop:h,lem:helping} and $W_u$ from \cref{def:W:helping} for isolated stable directions $u$. Since there are finitely many such $u$, we fix uniform constants $W=\max_u W_u$, $c=\min_{u} c_u$ and $V_u=\max V_u$ (chosen once $W$ is fixed, so that \cref{lem:helping} works). We also allow all subsequent constants to depend on $c,V,W$. We introduce the positive constants $C,K,\varepsilon,G,B,L,Z,T$ so that
\[1\ll C,K\ll\frac{1}{\e}\ll G\ll B\ll L\ll \frac{1}{Z}\ll \frac{1}{T}\ll\frac{1}{p}.\]
That is to say, $C$ and $K$ are positive numbers chosen large enough, $\varepsilon$ is positive small enough depending on $C$ and $K$, $G$ is positive chosen large enough depending on $C$, $K$ and $\varepsilon$ and so on. When constants are introduced more locally, they may also depend on $\cU$, $\alpha$, $\cS$, $\cS_\alpha$, $c$, $V$ and $W$, but not on the other quantities above, unless otherwise stated.

\section{Proof of the upper bound}
\label{sec:upper}
In this section, we focus on proving the following upper bound.
\begin{theorem}
\label{th:upper}
    Let $\cU$ be an isotropic voracious update family of difficulty $\alpha$. Then, recalling $\lambda$ from \cref{def:W}, for any $\varepsilon>0$, we have
    \[\lim_{p\to 0}\bbP_p\left(p^\alpha\log\tau<\lambda+\varepsilon\right)=1\]
\end{theorem}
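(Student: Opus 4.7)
The plan is to build, at each of exponentially many well-separated locations in a box of side $N = \exp((\lambda+\varepsilon/2)/p^{\alpha})$ around the origin, an internal growth mechanism $\cG(x)$ whose probability is at least $\exp(-(2\lambda+\varepsilon/4)/p^{\alpha})$, and then exploit independence across locations. Concretely, fix $\cD=(D_n)_{n\in\bbZ}\in\mathfrak{D}_\alpha$ with $\cW(\cD)\le\lambda+\varepsilon/32$ (using \cref{prop:lambda:alpha}) and truncate it to indices $N_1\le n\le N_2$, with $D_{N_1}$ of rescaled dimension at most some small $Z$ and $D_{N_2}$ of rescaled dimension at most some large $L$, so that the discarded tails carry total $W$-mass at most $\varepsilon/16$. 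This truncation is possible because $\cW(\cD)<\infty$ by \cref{base lambda}.

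At any prospective location $x\in\bbZ^2$, define $\cG(x)$ as the intersection of a constant-size ``seed'' event $\cS_0(x)$ near $x$ (chosen so that, by voracity, it suffices to infect $x+D_{N_1}$) together with the traversability events $\cT(R^u_n)$, where $R^u_n\subset(x+D_{n+1})\setminus(x+D_n)$ is a pixel rectangle of width $\sim m_u^{(n)}/p^{\alpha}$ and length $\sim s_u^{(n)}/p^{\alpha}$ placed against the $u$-edge of $x+D_n$, taken over $u\in\cS_\alpha$ and $N_1\le n<N_2$. A short induction on $n$ using \cref{def:traversable} and voracity shows that $\cG(x)$ deterministically infects $x+D_{N_2}$. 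Applying the Harris inequality~\eqref{eq:Harris}, \cref{cor:traversability}, and the uniform convergence $h^u_p\to h^u$ from \cref{prop:h}, one obtains, for $p$ small enough,
\[\bbP_p(\cG(x))\ \ge\ p^{O(1)}\prod_{n,u}\exp\!\left(-\tfrac{1+o(1)}{p^{\alpha}}\,h^u(m_u^{(n)})\,s_u^{(n)}\right)\ \ge\ \exp\!\left(-\tfrac{2\lambda+\varepsilon/4}{p^{\alpha}}\right).\]

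Tile $[-N,N]^2$ into disjoint cells of side $2L/p^{\alpha}$, producing $M=\Theta(N^2 p^{2\alpha}/L^2)=\exp((2\lambda+\varepsilon+o(1))/p^{\alpha})$ cells; placing one event $\cG(x)$ per cell makes these events independent (each depends only on sites inside its own cell), and since $M\cdot\bbP_p(\cG)\to\infty$ at least one of them occurs with probability tending to~$1$. On that event a fully infected translate of $D_{N_2}$ exists within the box, and by standard ballistic spreading (using voracity to propagate along every isolated stable direction) it engulfs the origin within $O(N)\cdot\mathrm{poly}(1/p)$ further updates, giving $\tau\le\exp((\lambda+\varepsilon)/p^{\alpha})$ with high probability.

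The main obstacle is executing the second step rigorously: one must position the rectangles $R^u_n$ consistently within the polygon $x+D_{N_2}$ so that each one ``docks'' onto the previously infected region $x+D_n$ along its $u$-base, while simultaneously handling corners and the non-$\cS_\alpha$ edges (whose growth contributes no variational cost but must still be carried out). Voracity (\cref{prop:voracity}) is precisely what converts the helping set guaranteed by $\cT(R^u_n)$ into a full $W$-helping set that propagates infection along the new line $l_u$ of $x+D_{n+1}$, closing the inductive step.
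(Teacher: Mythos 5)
Your overall architecture matches the paper's: pick a near-optimal sequence in $\mathfrak{D}_\alpha$, truncate it, grow a droplet through the critical scale by certifying traversability of rectangles placed against the $\cS_\alpha$-edges, estimate the exponential cost by $\sum W$, tile a region of side $\exp((\lambda+\varepsilon/2)/p^{\alpha})$ into independent cells, and spread to the origin. This is essentially the mechanism behind \cref{prop:filling:functional}, \cref{creation of a critical droplet} and the final proof. However, two of your steps are left in a form that would actually fail.

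First, a \emph{constant-size} seed event cannot infect $x+D_{N_1}$: after the $p^{-\alpha}$-rescaling implicit in your choice of rectangles $R^u_n$, the droplet $D_{N_1}$ has diameter $\Theta(Zp^{-\alpha})\to\infty$, and in a critical model a bounded infected cluster in the bulk never fills a region of diverging size. Voracity is the wrong tool here -- it converts a helping set sitting on a \emph{half-plane} boundary into a $W$-helping set along a line, and says nothing about growth from a finite seed. The paper covers this subcritical range in \cref{creation of a small seed}, traversing rectangles whose widths grow geometrically from $O(1)$ up to $\Theta(1/(Bp^{\alpha}))$ at total cost only $\exp(-\varepsilon p^{-\alpha})$. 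Second, a fully infected $D_{N_2}$ of diameter $\Theta(Lp^{-\alpha})$ does not reach the origin by ``standard ballistic spreading'': to extend by one line in a stable direction one must find a (W-)helping set on a segment of length $\Theta(Lp^{-\alpha})$, which fails with probability of order $e^{-cL}$, a fixed constant; summed over the $\exp((\lambda+\varepsilon/2)/p^{\alpha})$ lines separating the droplet from the origin this failure is essentially certain. The paper inserts \cref{creation of a large droplet} to inflate the droplet to supercritical size $p^{-3W}$ at negligible additional exponential cost, after which the all-$W$-helping-sets event $\cE$ in the final proof has probability tending to $1$ and spread becomes effectively deterministic. A smaller omission: applying \cref{prop:filling:functional} requires $\Psi(D_n,D_{n+1})\le Tp^{-\alpha}$, so the truncated sequence must also be \emph{refined} into small increments -- this is done in \cref{lem:sequence} and is not a mere truncation.
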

We aim to exhibit a mechanism for infecting large droplets and estimate its probability. For the rest of \cref{sec:upper}, we fix $\cU$ as in \cref{th:upper}.

\subsection{Lower bound on the probability of growth}

For droplets $D_1\subseteq D_2$,  define $\I(D_1,D_2)=\{[(A\cap D_2)\cup D_1]\supseteq D_2\}$ to be the event that $D_1$ plus the infections present in $D_2$ are enough to infect $D_2$. We now bound the probability of $\I(D_1,D_2)$ for two very similar droplets. Recall from \cref{subsec:droplets} that $D^z=D+D[z]$ for a droplet $D$ and $z>0$.

\begin{proposition}
\label{prop:filling:functional}For any droplets $D_1\subseteq D_2\subseteq D[Bp^{-\alpha}]$ satisfying $\Psi(D_1,D_2)\leq Tp^{-\alpha}$, we have
\begin{equation}\bbP_p\left(\cI\left(D^{Zp^{-\alpha}}_1,D^{Zp^{-\alpha}}_2\right)\right)\ge p^{-C}\exp\left(-(1+\e)\frac{W_p\left(D^{Zp^{-\alpha}}_1,D^{Zp^{-\alpha}}_2\right)}{p^{\alpha}}\right),\label{lower bound droplet}\end{equation}
\end{proposition}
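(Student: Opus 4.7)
The plan is to exhibit an explicit growth mechanism from $D_1^{Zp^{-\alpha}}$ to $D_2^{Zp^{-\alpha}}$ through traversing thin rectangular strips along each quasi-stable edge, and to lower-bound its probability via the Harris inequality and Corollary~\ref{cor:traversability}.

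First, I would decompose the growth into $|\cS|$ one-direction steps. Enumerate $\cS=\{u_1,\dots,u_{|\cS|}\}$ and define droplets $F_0=D_1^{Zp^{-\alpha}}\subseteq F_1\subseteq\cdots\subseteq F_{|\cS|}=D_2^{Zp^{-\alpha}}$, where $F_i$ is obtained from $F_{i-1}$ by increasing only the $u_i$-radius by $s_{u_i}$. Inside each annulus $F_i\setminus F_{i-1}$ I place a rectangle $R_i$ perpendicular to $u_i$, of width slightly larger than the $u_i$-edge length $m_{u_i}^{(i-1)}$ of $F_{i-1}$ and of height $s_{u_i}$ along $u_i$. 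The padding $Zp^{-\alpha}$ in both droplets ensures these rectangles lie inside $D_2^{Zp^{-\alpha}}$, have width exceeding the $3V$ threshold of Corollary~\ref{cor:traversability}, and are pairwise disjoint.

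Next, I would impose the traversability event $\cT(R_i)$ for each $i$ with $u_i\in\cS_\alpha$. For unstable $u_i$ no event is needed, since growth in that direction is automatic given a filled adjacent half-plane. For stable $u_i\in\cS\setminus\cS_\alpha$, the analogous traversability event has probability bounded below by a polynomial in $p$: because $\alpha(u_i)<\alpha$ and $m_{u_i}^{(i-1)}\gtrsim Zp^{-\alpha}$, we have $p^{\alpha(u_i)}m_{u_i}^{(i-1)}\to\infty$, so Proposition~\ref{prop:h}(\ref{item:2}) ensures $h_p^{u_i}(p^{\alpha(u_i)}m_{u_i}^{(i-1)})\to 0$ and the exponential factor in Corollary~\ref{cor:traversability} tends to $1$. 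Applying the Harris inequality then yields
\[\bbP_p\Bigl(\bigcap_{i=1}^{|\cS|}\cT(R_i)\Bigr)\ge p^{-C/2}\exp\Bigl(-\sum_{i:u_i\in\cS_\alpha}h_p^{u_i}\bigl(p^\alpha(m_{u_i}^{(i-1)}-2V)\bigr)s_{u_i}\Bigr),\]
where $p^{-C/2}$ absorbs the $p^{WV}$ factors across all $|\cS|$ steps. Since each $m_{u_i}^{(i-1)}$ equals the $u_i$-edge length of $D_1^{Zp^{-\alpha}}$ up to $O(Tp^{-\alpha})$ corrections, which are $o(m_{u_i})$ because $T\ll Z$, uniform continuity of $h_p^u$ on compact subintervals of $(0,\infty)$ from Proposition~\ref{prop:h}(\ref{item:3}) upgrades this to the claimed $\exp(-(1+\e)W_p(D_1^{Zp^{-\alpha}},D_2^{Zp^{-\alpha}})/p^\alpha)$.

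Finally, I would verify the deterministic implication that when all $\cT(R_i)$ occur and $D_1^{Zp^{-\alpha}}\subseteq A$, the event $\cI(D_1^{Zp^{-\alpha}},D_2^{Zp^{-\alpha}})$ holds. Proceeding by induction on $i$ and assuming $F_{i-1}\subseteq[(A\cap D_2^{Zp^{-\alpha}})\cup D_1^{Zp^{-\alpha}}]$, traversability of $R_i$ combined with Lemma~\ref{lem:helping} produces a $W$-helping set on each line $l_{u_i}(j)$ inside $R_i$; voracity of $u_i$ then guarantees that each such set, together with the infected half-plane beneath it supplied by $F_{i-1}$, infects the entire portion of the line lying inside $F_i$. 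The main obstacle is the geometry at corners where $F_i\setminus F_{i-1}$ meets strips filled at earlier steps: one must verify that no gap develops at the intersection of the $u_i$-strip with the $u_{i-1}$- and $u_{i+1}$-strips. The $Zp^{-\alpha}$ padding is precisely what guarantees enough room for the $W$-helping sets to propagate through these corner regions without touching the boundary of $D_2^{Zp^{-\alpha}}$.
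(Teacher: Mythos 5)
Your proposal follows essentially the same strategy as the paper's proof: reduce the event $\cI$ to a conjunction of traversability events $\cT(R_i)$ in rectangles near each quasi-stable edge, lower-bound this by Harris and \cref{cor:traversability}, and then split the $\cS_\alpha$ directions (which dominate the exponent) from the rest (whose contribution is polynomial or negligible). The only structural difference is that you grow the droplet sequentially through intermediate droplets $F_0\subseteq\cdots\subseteq F_{|\cS|}$, one edge at a time, whereas the paper places each rectangle $R^u$ in the trapezoid $D[\ba+\be_u s_u]\setminus D[\ba]$ (all anchored at the same inner droplet) and then does the induction on $\Psi(D_1,D_2)$ only inside the deterministic claim. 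The two are interchangeable; neither buys anything over the other, since the trapezoid widths differ only by $O(\Psi(D_1,D_2))=O(Tp^{-\alpha})$, which is $o(Zp^{-\alpha})$ and therefore absorbed by the $(1+\e)$ factor in both cases.

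Two imprecisions are worth flagging. First, the deterministic implication --- that all $\cT(R_i)$ occurring forces $F_{i-1}$ to grow into $F_i$ --- is precisely where the hard work is, and you correctly identify the corner problem, but you then attribute its resolution to the $Zp^{-\alpha}$ padding. That is not what closes the gap: the padding serves to keep the dimensions $m_u$ bounded away from $0$ on the scale $p^{-\alpha}$ (so that $h_p^u(p^\alpha m_u)$ is finite and the traversability bound of \cref{cor:traversability} applies). What actually ensures that the $W$-helping sets fill the full trapezoidal segments out to the slanted sides is the choice of $\cS$ to contain \emph{all} quasi-stable directions, which is exactly \cite{Bollobas15}*{Lemma 5.4} (used explicitly in the paper's \cref{claim:EI}). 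Without invoking that lemma your induction step has a genuine hole. Second, a cosmetic point: once a $W$-helping set exists, it is \cref{def:W:helping} (not voracity) that propagates infection along the line; voracity was only used inside \cref{lem:helping} to upgrade an arbitrary helping set to a $W$-helping set, and plays no further role in this proposition.
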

\begin{proof}
Consider two droplets $D^{Zp^{-\alpha}}_1=D[\textbf{a}]\subseteq D^{Zp^{-\alpha}}_2=D[\textbf{b}]$ as in the statement. Let $\bs=\bb-\ba$ be the location. We will use the infection mechanism illustrated in \cref{fig:upper bound}. Fix $u\in \cS$ and let 
\begin{align}
\label{eq:def:mutilde}\tilde m_u&{}=\max\{m\in\bbN:\exists t'\in\bbZ^2,t'+R^u(\tilde m_u,s_u)\subseteq D[\ba+\be_u s_u]\setminus D[\ba]\},\\
\label{eq:def:Ru}R^u&{}=t+R^u(\tilde m_u,s_u)\subseteq D[\ba+\be_u s_u]\setminus D[\ba]\end{align}
for some $t\in\bbZ^2$. Note that $D[\ba+\be_us_u]$ is the droplet $D^{Zp^{-\alpha}}_1$ extended in direction $u$, so that its $u$-edge is contained in the one of $D^{Zp^{-\alpha}}_2$, while the other edges of $D[\ba+\be_us_u]$ contain the corresponding edges of $D^{Zp^{-\alpha}}_1$. In words, $R^u$ is the largest rectangle fitting in the trapezoid depicted in \cref{fig:upper bound}, whose height is very small in the present setting. Note that 
\begin{equation}
\label{eq:mu:tilde:mu}BCp^{-\alpha}\ge m_u\ge \tilde m_u\ge m_u-Cs_u\ge m_u-CTp^{-\alpha}\ge Z/(Cp^\alpha),\end{equation}
where $m_u\ge Zp^{-\alpha}$ is the $u$-dimension of $D^{Zp^{-\alpha}}_1$, using that $1/T\gg 1/Z\gg C$.

Recalling \cref{def:traversable}, consider the event 
\[\cE=\bigcap_{u\in\cS}\cT(R^u).\]

\begin{claim}
\label{claim:EI}
We have $\cE\subseteq \cI(D^{Zp^{-\alpha}}_1,D^{Zp^{-\alpha}}_2)$.
\end{claim}
\begin{proof}
Let us first check that $\cE$ implies that $[D^{Zp^{-\alpha}}_1\cup A\cap D^{Zp^{-\alpha}}_2]\supseteq D[\ba+\be_u]$ for any $u\in\cS$ such that $s_u\ge 1$. 

There are two cases to consider, as in \cref{def:traversable}. If $s_u>V$, then $\cT(R^u)$ guarantees a helping set at distance $V_u$ from the boundary of $R^u$ parallel to $u$. Then, by \cref{lem:helping}, this helping set generates a $W$-helping set that is still at distance at least $V_u/2$ from the boundary. By \cref{def:W:helping}, this $W$-helping set infects the entire segment $\bbZ^2\cap(D[\ba+\be_u]\setminus D[\ba])$ possibly up to bounded distance from the boundary. However, it is known (see \cite{Bollobas15}*{Lemma 5.4}) that, thanks to the choice of $\cS$ in \cref{eq:def:S}, in fact the entire segment $\bbZ^2\cap(D[\ba+\be_u]\setminus D[\ba])$ becomes infected. If $s_u\le V$ instead, then $\cT(R^u)$ directly provides us with a $W$-helping set as above and the same conclusion holds. 

Proceeding by induction on $\Psi(D_1,D_2)$, we obtain the desired conclusion.
\end{proof}
By \cref{claim:EI}, it remains to bound the probability of $\cE$. \Cref{cor:traversability} gives
\begin{equation}
\label{eq:filling:1}
\bbP_p\left(\cI\left(D^{Zp^{-\alpha}}_1,D^{Zp^{-\alpha}}_2\right)\right)\ge \bbP_p(\cE)\ge \prod_{u\in\cS}\left(p^{WV}\exp\left(-h^u_p\left(p^{\alpha(u)}(\tilde m_u-2V)\right)s_u\right)\right).\end{equation}

First consider $u\in\cS\setminus\cS_\alpha$, so that $\alpha(u)\le \alpha-1$ (recall \cref{eq:def:Sa,eq:def:alpha:global}). Then, by monotonicity of $h^u_p$, \cref{eq:mu:tilde:mu,estimate}
\begin{equation}
\label{eq:filling:2}
h^u_p\left(p^{\alpha(u)}(\tilde m_u-2V)\right)\le h^u_p(p^{\alpha(u)-\alpha}Z/C)\le\exp\left(-p^{-1/2}\right).\end{equation}
Next, take $u$ such that $u\in\cS_\alpha$, so that $\alpha(u)=\alpha$. Note that $p^\alpha(m_u-\tilde m_u+2V)\le CT$ and $p^\alpha (\tilde m_u-2V)\in[Z/C,BC]$ by \cref{eq:mu:tilde:mu}. But $h^u_p/h^u\to 1$ uniformly on $[Z/C,BC]$ by \cref{prop:h}\ref{item:3}, $h^u$ is uniformly continuous on this interval and $T\ll 1/C,\e,1/B,Z$, so 
\begin{equation}
\label{eq:filling:3}
h^u_p\left(p^{\alpha(u)}(\tilde m_u-2V)\right)\le (1+\e)h^u(p^\alpha m_u).
\end{equation}
Putting \cref{eq:filling:1,eq:filling:2,eq:filling:3} together with \cref{def:W}, completes the proof of \cref{prop:filling:functional}.
\end{proof}

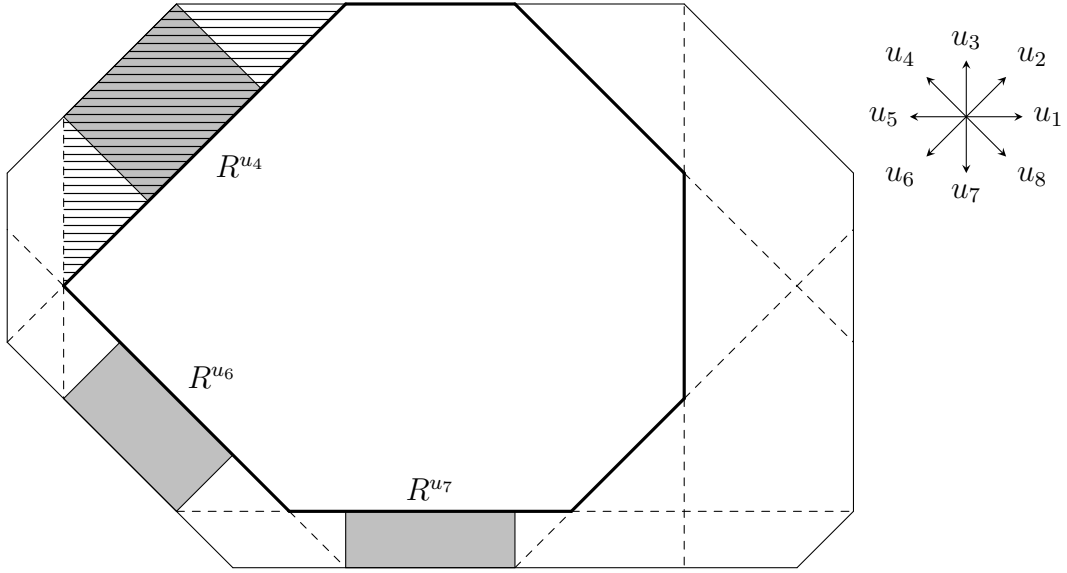
\begin{figure}
	\centering
\begin{tikzpicture}[line cap=round,line join=round,>=stealth,x=0.75cm,y=0.75cm]
\begin{scope}[shift={(9,2)}]
\draw [->] (0,0) -- (1,0) node[right]{$u_1$};
\draw [->] (0,0) -- (0.707,0.707) node[above right]{$u_2$};
\draw [->] (0,0) -- (0,1) node[above]{$u_3$};
\draw [->] (0,0) -- (-0.707,0.707) node[above left]{$u_4$};
\draw [->] (0,0) -- (-1,0) node[left]{$u_5$};
\draw [->] (0,0) -- (-0.707,-0.707) node[below left]{$u_6$};
\draw [->] (0,0) -- (0,-1) node[below]{$u_7$};
\draw [->] (0,0) -- (0.707,-0.707) node[below right]{$u_8$};
\end{scope}
\fill[fill=black,fill opacity=0.25] (-3.5,2.5) -- (-5,4) -- (-7,2) -- (-5.5,0.5) -- cycle;
\fill[fill=black,fill opacity=0.25] (-6,-2) -- (-7,-3) -- (-5,-5) -- (-4,-4) -- cycle;
\fill[fill=black,fill opacity=0.25] (-2,-5) -- (-2,-6) -- (1,-6) -- (1,-5) -- cycle;
\draw(-3.5,2.5) -- (-5,4) -- (-7,2) -- (-5.5,0.5) -- cycle;
\draw (-6,-2) -- (-7,-3) -- (-5,-5) -- (-4,-4) -- cycle;
\draw (-2,-5) -- (-2,-6) -- (1,-6) -- (1,-5) -- cycle;
\draw (-0.5,-5) node[above]{$R^{u_{7}}$};
\draw (-5,-3) node[above right]{$R^{u_{6}}$};
\draw (-4.5,1.5) node[below right]{$R^{u_{4}}$};
\draw[very thick] (-3,-5)-- (-7,-1);
\draw[very thick] (-7,-1)-- (-2,4);
\draw[very thick] (-2,4)-- (1,4);
\draw[very thick] (1,4)-- (4,1);
\draw[very thick] (4,1)-- (4,-3);
\draw[very thick] (4,-3)-- (2,-5);
\draw[very thick] (2,-5)-- (-3,-5);
\fill[pattern=horizontal lines] (-2,4)-- (-5,4)--(-7,2)--(-7,-1)--cycle;
\draw (6,-6)-- (7,-5);
\draw (7,-5)-- (7,1);
\draw (7,1)-- (4,4);
\draw (4,4)-- (-5,4);
\draw (-5,4)-- (-8,1);
\draw (-8,1)-- (-8,-2);
\draw (-8,-2)-- (-4,-6);
\draw (6,-6)-- (-4,-6);
\draw [dashed] (-8,-2)-- (-7,-1);
\draw [dashed] (-7,-1)-- (-8,0);
\draw [dashed] (-7,-1)-- (-7,2);
\draw [dashed] (-7,-1)-- (-7,-3);
\draw [dashed] (-5,-5)-- (-3,-5);
\draw [dashed] (-3,-5)-- (-2,-6);
\draw [dashed] (1,-6)-- (2,-5);
\draw [dashed] (2,-5)-- (7,-5);
\draw [dashed] (4,-3)-- (4,-6);
\draw [dashed] (4,-3)-- (7,0);
\draw [dashed] (4,1)-- (4,4);
\draw [dashed] (4,1)-- (7,-2);
\end{tikzpicture}
	\caption{The rectangles $R^u$ used in the proof of \cref{prop:filling:functional,prop:bound:E} for the droplets from \cref{fig:droplet}. In the picture only 3 of these rectangles are non-empty, as it can be seen thanks to the dashed lines. However, in \cref{prop:filling:functional} it is not possible for any of the rectangles to be empty, since the total location is much smaller than the smallest dimension. The trapezoid $D[\ba+\be_{u_4}s_{u_4}]\setminus D[\ba]$ is hatched.}
	\label{fig:upper bound}
\end{figure}

\subsection{Proof of Theorem~\ref{th:upper}}
We say that a droplet is $D$ \emph{internally filled} if $[D\cap A]\supset D$ and denote the corresponding event by $\cI(D)$. Our next goal is to prove the upper bound of our main result. To that end, we prove lower bounds on the probability of internal filling progressively larger droplets thanks to \cref{prop:filling:functional}. We start by proving that a small droplet is created with fairly good probability. While the next statement can be extracted from the proof of \cite{Hartarsky24univupper}*{Lemma 5.6}, itself inspired by \cite{Bollobas23}*{Section 4}, we include a self-contained proof for completeness.

\begin{lemma}[Subcritical growth]
\label{creation of a small seed}
We have
\[\P_p(\cI(D[1/(Bp^{\alpha})]))\geq \exp(-\e p^{-\alpha}).\]
\end{lemma}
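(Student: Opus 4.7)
The plan is to construct $D[1/(Bp^\alpha)]$ from a constant-size seed through a geometric sequence of nested droplets and bound the probability of each growth step via \cref{cor:traversability}, so that the total cost is $\le\e p^{-\alpha}$. Fix a constant $c_0>3V$ and set $D^{(0)}=D[c_0]$; requiring the $O(1)$ lattice sites of $D^{(0)}$ to be initially infected realises $\cI(D^{(0)})$, so $\P_p(\cI(D^{(0)}))\ge p^{C_0}$ is polynomial in $p$. Then let $D^{(i)}=D[a_i]$ with $a_{i+1}=(1+\eta)a_i$ for a small constant $\eta>0$, terminating at some $N=\Theta(\log(1/p))$ with $a_N=1/(Bp^\alpha)$. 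For each direction $u\in\cS$ and each $i$, choose a rectangle $R^u_i\subseteq D[\ba^{(i)}+\be_us_u^{(i)}]\setminus D[\ba^{(i)}]$ of dimensions $\tilde m_u^{(i)}\ge a_i/2$ by $s_u^{(i)}=\eta a_i$, exactly as in the proof of \cref{prop:filling:functional}, and set $\cE_i=\bigcap_{u\in\cS}\cT(R^u_i)$.

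The inductive argument of \cref{claim:EI} only requires $R^u_i$ to be wide enough for \cref{lem:helping} (or the $n\le V_u$ clause of \cref{def:traversable}) to produce $W$-helping sets, hence carries over to give $\cE_i\subseteq\cI(D^{(i)},D^{(i+1)})$ at every scale. Since $\cI(D^{(0)})$ and the events $(\cE_i)_i$ depend on the disjoint regions $D^{(0)}$ and $(D^{(i+1)}\setminus D^{(i)})_i$, they are independent, so
\[\P_p(\cI(D[1/(Bp^\alpha)]))\ge\P_p(\cI(D^{(0)}))\prod_{i=0}^{N-1}\prod_{u\in\cS}\P_p(\cT(R^u_i)),\]
and each traversability factor is at least $p^{WV}\exp(-h_p^u(p^{\alpha(u)}(\tilde m_u^{(i)}-2V))s_u^{(i)})$ by \cref{cor:traversability}.

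Three contributions remain to control in the negative logarithm of this product. First, the $O(N|\cS|)$ polynomial factors $p^{WV}$ together with $\P_p(\cI(D^{(0)}))\ge p^{C_0}$ contribute $O(|\log p|^2)=o(p^{-\alpha})$. Next, for $u\in\cS_\alpha$ the geometric spacing $s_u^{(i)}=\eta a_i$ turns $\sum_i h_p^u(p^\alpha(\tilde m_u^{(i)}-2V))s_u^{(i)}$ into a Riemann approximation to $\int_{c_0}^{1/(Bp^\alpha)}h^u(p^\alpha a)\,da=p^{-\alpha}\int_{p^\alpha c_0}^{1/B}h^u(y)\,dy$; by the integrability of $h^u$ near $0$ from \cref{prop:h}\ref{item:2}, this is at most $o_B(1)\cdot p^{-\alpha}$ with $o_B(1)\to 0$ as $B\to\infty$. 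Finally, for $u\in\cS\setminus\cS_\alpha$, the analogous change of variable gives $p^{-\alpha(u)}\int_0^\infty h^u(y)\,dy=O(p^{-\alpha(u)})=o(p^{-\alpha})$, using $\alpha(u)<\alpha$ and the global $L^1$ bound on $h^u$. Combining everything and choosing $B$ large enough yields $-\log\P_p(\cI(D[1/(Bp^\alpha)]))\le\e p^{-\alpha}$ for all sufficiently small $p$.

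The main difficulty is justifying the Riemann-sum approximation uniformly across the microscopic ($p^\alpha a_i\to0$) and macroscopic ($p^\alpha a_i=\Theta(1)$) regimes in a single bound. This relies on the uniform convergence $h_p^u/h^u\to 1$ on compact subintervals of $(0,\infty)$ from \cref{prop:h}\ref{item:3} together with the small-argument integrability from \cref{prop:h}\ref{item:2} that tames the contribution near $0$.
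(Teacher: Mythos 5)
Your proposal is correct and follows essentially the same route as the paper: grow from a constant-size fully-infected seed through a geometric sequence of concentric droplets, bound each annular step by traversability of per-direction rectangles via \cref{cor:traversability}, use independence of the disjoint annuli, and absorb the $u\in\cS\setminus\cS_\alpha$ and polynomial contributions as negligible while choosing $B$ large to make the $\cS_\alpha$ sum small. One simplification worth noting: the detour through $h^u$ and \cref{prop:h}\ref{item:3} in your final paragraph is unnecessary --- since $h^u_p(x)\le -\log(1-e^{-c_ux})$ uniformly in $p$ by \cref{prop:h}\ref{item:2}, you can bound $\sum_i h^u_p(p^\alpha(\tilde m_u^{(i)}-2V))\,\eta a_i$ directly by a Riemann sum of $-\log(1-e^{-c y})$ on the geometric grid $y_i=cp^\alpha a_i/4$ and conclude from the integrability of that explicit function near $0$; this is what the paper does and it sidesteps the two-regime matching you describe as the ``main difficulty''.
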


\begin{proof}
To see this, we will proceed similarly to the proof of \cref{prop:filling:functional}. Fix a constant $\kappa>1$ close enough to 1 and $N\in\bbN$ such that $C\kappa^N=1/(Bp^{\alpha})$. For $n\in\{0,\dots,N\}$, let $D_n=D[C \kappa^n]$. In order for the final droplet $D_N$ to be internally filled, it suffices for the first one to be fully infected and all events $\cI(D_i,D_{i+1})$ to occur. As in the proof of \cref{prop:filling:functional}, in order to guarantee the latter, it suffices for suitable translates of the rectangles $R^u(\kappa^i,(\kappa^{i+1}-\kappa^i)a_u)$ to be traversable, where $D_0=D[\ba]$. 

Therefore, the independence of these events, \cref{cor:traversability} and \cref{prop:h}\ref{item:2} give
\begin{align*}\bbP_p[\cI(D[1/(Bp^{\alpha})])]\ge{}& p^{C^3+NWV|\cS|}\prod_{i=0}^{N-1}\prod_{u\in\cS}\exp\left(\left(\kappa^{i+1}-\kappa^i\right)a_u\log\left(1-e^{-p^{\alpha(u)}\kappa^i}\right)\right)\\
\ge{}& e^{C\log^2(1/p)}\exp\left(\sum_{i=0}^{N-1} \sum_{u\in\cS}C\kappa^i\log\left(1-e^{-p^{\alpha(u)}\kappa^i}\right)\right).
\end{align*}
The terms corresponding to $u\in\cS\setminus\cS_\alpha$ contribute a negligible factor $\exp(-C^2p^{-\alpha(u)})$. On the other hand, terms with $u\in\cS_\alpha$ can be bounded by
\[\exp\left(-p^{-\alpha}\e /\left(2|\cS_\alpha|\right)\right),\]
since $B$ is large enough depending on $C$ and $\varepsilon$. Putting these bounds together, we obtain the desired result.
\end{proof}

Before we turn to `critical' droplet sizes, which is the most important scale, we will need a truncation and refinement statement for the threshold constant $\lambda$ from \cref{def:W}.
\begin{lemma}
\label{lem:sequence}
There exists a sequence of droplets $(D_n)_{n\leq N}$ such that:\begin{itemize}
\item $D_0^Z\subseteq D[1/B]$,
\item $D[B]\subseteq D^Z_N\subseteq D[L/2]$,
\item $\Psi(D_n^Z,D_{n+1}^Z)\le T$ for every $0\le n\le N-1$,
\item $\sum_{n=0}^{N-1} W(D_n^Z,D_{n+1}^Z)\leq 2\lambda+\e$.
\end{itemize}
\end{lemma}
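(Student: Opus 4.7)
The plan is to start from a near-optimal bi-infinite sequence in $\mathfrak D$ and extract a finite contiguous subsequence in the required window, after refining it if necessary so that consecutive total locations are small. By \cref{def:W}, I first fix $\cD^*=(D^*_n)_{n\in\bbZ}\in\mathfrak D$ satisfying $\sum_{n\in\bbZ}W(D^*_n,D^*_{n+1})=2\cW(\cD^*)\le 2\lambda+\varepsilon/4$.

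\emph{Refinement.} Whenever $\Psi(D^*_n,D^*_{n+1})>T$, I will insert intermediate droplets whose radii linearly interpolate those of $D^*_n$ and $D^*_{n+1}$ in $k=k_n$ sub-steps, with $k_n$ chosen so each sub-step has $\Psi\le T$. The additivity of $m_u$ under the Minkowski sum of droplets (from \cref{eq:def:sum}: $m_u(D[\ba+\bb])=m_u(D[\ba])+m_u(D[\bb])$) implies that the dimension of the $i$-th interpolant is $(1-i/k)m_u(D^*_n)+(i/k)m_u(D^*_{n+1})$, lying on the segment between the two endpoint dimensions. Using uniform continuity of $h^u$ on compact subintervals of $(0,\infty)$ from \cref{prop:h}\ref{item:3}, combined with the near-$0$ and near-$\infty$ bounds of \cref{prop:h}\ref{item:2}, I will argue that the refined sum differs from the original by at most $\varepsilon/4$, producing a refined bi-infinite sequence $\widetilde\cD=(\widetilde D_n)_{n\in\bbZ}$ with $\Psi(\widetilde D_n,\widetilde D_{n+1})\le T$ for all $n$ and $\sum_n W(\widetilde D_n,\widetilde D_{n+1})\le 2\lambda+\varepsilon/2$.

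\emph{Truncation and verification.} Because $\widetilde D_n\to\{0\}$ as $n\to-\infty$ and $\widetilde D_n\to\bbR^2$ as $n\to+\infty$, I will pick the largest $n_1$ with $\widetilde D^Z_{n_1}\subseteq D[1/B]$ and the smallest $n_2\ge n_1$ with $D[B]\subseteq\widetilde D^Z_{n_2}$. Approximate balancedness of the near-optimal trajectory (any direction in which $\widetilde D_n$ grows disproportionately has correspondingly small $m_u$, hence contributes a disproportionately large term via $h^u(m_u)$ to $\sum W$) together with the per-step bound $\Psi\le T\ll L$ ensures $\widetilde D^Z_{n_2}\subseteq D[L/2]$. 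Setting $D_n=\widetilde D_{n_1+n}$ for $0\le n\le N:=n_2-n_1$, the first three bullets hold directly or via \cref{loc}. For the fourth bullet, \cref{loc} preserves the location under $^Z$-enlargement, and the identity $m_u(D_n^Z)=m_u(D_n)+Z$ (again from \cref{eq:def:sum}) combined with the monotonicity of $h^u$ gives $W(D_n^Z,D_{n+1}^Z)\le W(D_n,D_{n+1})$ for every $n$, so
\[\sum_{n=0}^{N-1}W(D_n^Z,D_{n+1}^Z)\le\sum_{n\in\bbZ}W(\widetilde D_n,\widetilde D_{n+1})\le 2\lambda+\varepsilon.\]

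\emph{Main obstacle.} The hard part will be justifying the refinement step: the edge length $m_u$ is \emph{not} monotone under componentwise increase of the radii---for instance, increasing only $a_{u_1}$ in a regular hexagonal droplet strictly \emph{shrinks} $m_{u_1}$---so the linearly interpolated intermediate dimensions can dip below both endpoint values, and a naive refinement can inflate the Riemann sum above the coarse one. I expect to handle this by letting the refinement become arbitrarily fine, obtaining a continuous Riemann integral of $h^u$ along the linearly parameterised path, and controlling the discretisation error using the continuity and integrability of $h^u$ supplied by \cref{prop:h}\ref{item:2}\ref{item:3}. A secondary delicate point is ensuring $\widetilde D^Z_{n_2}\subseteq D[L/2]$, which hinges on the balancedness property of near-optimal trajectories mentioned above.
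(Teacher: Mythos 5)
Your skeleton is right (start from a near-optimal $\cD\in\mathfrak D$, truncate and refine), and your reduction from $W(D_n^Z,D_{n+1}^Z)$ to $W(D_n,D_{n+1})$ via $m_u(D^Z)=m_u(D)+Z$, \cref{loc} and monotonicity of $h^u$ is correct and essentially what the paper does. The difficulty you flag in the last paragraph is, however, the crux, and your proposed resolution does not close it.

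The gap is in the refinement step. You propose to replace the coarse step $D_n\to D_{n+1}$ by linear interpolation of radii and then let the mesh go to zero, appealing to a Riemann integral. But this goes in the wrong direction: along the linear path $\ba(t)=(1-t)\ba^{(n)}+t\ba^{(n+1)}$ each $m_u(t)$ is affine, so whenever $m_u^{(n+1)}<m_u^{(n)}$ the refined Riemann sum $\sum_i h^u(m_u^{n,i})(a_u^{n,i+1}-a_u^{n,i})$ \emph{converges up to} $\int_0^1 h^u(m_u(t))\,\mathrm{d}a_u(t)\ge h^u(m_u^{(n)})\,s_u^{(n)}$, since $h^u$ is non-increasing. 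Fine linear subdivision therefore \emph{inflates} the cost precisely in the problematic direction; taking the limit only certifies that the refined total is at least the coarse total, not at most. Continuity/integrability of $h^u$ controls discretisation error around the integral, but the integral itself is on the wrong side of the coarse sum.

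The paper avoids this by refining one coordinate at a time rather than simultaneously. At each sub-step it advances a single radius $a_{u^{n,i}}$ by at most $T$, where $u^{n,i}$ is chosen so that its current dimension satisfies $m_{u^{n,i}}^{n,i}\ge m_{u^{n,i}}^{(n)}$ (and $a^{(n+1)}_{u^{n,i}}>a^{n,i}_{u^{n,i}}$). Such a direction always exists because for nested convex bodies $D_n\subseteq D^{n,i}$ the perimeter is non-decreasing, so $\sum_u m_u^{n,i}\ge\sum_u m_u^{(n)}$. This ordering guarantees $h^{u^{n,i}}(m_{u^{n,i}}^{n,i})\le h^{u^{n,i}}(m_{u^{n,i}}^{(n)})$ at the moment each radius is advanced, and summing telescopes to $\sum_i W(D^{n,i},D^{n,i+1})\le W(D_n,D_{n+1})$. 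Your simultaneous linear path cannot exploit any such choice of order, and in fact a one-coordinate-at-a-time scheme is not a special case of linear interpolation. To repair your proof you should replace the linear-interpolation refinement by this ordered, single-coordinate refinement and invoke perimeter monotonicity. Separately, the containment $\widetilde D_{n_2}^Z\subseteq D[L/2]$ does not need any ``approximate balancedness'' argument: $\cD$ is fixed once $\e$ is, so $L$ may simply be taken large enough depending on that fixed sequence.
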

\begin{proof}
In order to deduce the existence of $(D_n)_{n\le N}$ from \cref{def:W}, we proceed as follows. We start with a sequence $\cD\in\mathfrak D$ such that $\cW(\cD)\le \lambda +\e/3$, so that $\cD$ does not depend on $B$, but only on $\varepsilon$. Note that along this sequence if there is a dimension $m_u^{(n)}=0$ for $u\in\cS_\alpha$, then $a_u^{(n+1)}-a_u^{(n)}=0$, since otherwise $\cW(\cD)$ would be infinite (recall Footnote \ref{foot:degenerate}). We truncate and index the sequence so that its first term is $D_0\subseteq D[1/(2B)]$ and its last one is $D_N\supseteq D[B]$. Since $L$ can be chosen large enough depending on $\cD$, we can ensure that $D_N^Z\subseteq D[L/2]$ and that $m_u^{(i)}\ge 1/L$ for all $u\in\cS_\alpha$ and $i\in\{0,\dots,N-1\}$ such that $a_u^{(n+1)}-a_u^{(n)}\neq0$. Note that since $Z<1/L$, we have
\[0\le \sum_{n=0}^{N-1} \left(W\left(D_n,D_{n+1}\right)-W\left(D^Z_n,D^Z_{n+1}\right)\right)\le \omega(Z)|\cS_\alpha|\max_{u\in\cS}a_u L,\]
where $\omega$ is the maximum of the moduli of continuity of all $h^u$ over the compact set $[1/L,L]$ and $D[1]=D[\ba]$. The right-hand side above goes to $0$ uniformly in the choice of the sequence as $Z\to 0$ with $L$ fixed. 

It therefore remains to show that we can refine the sequence in order to have $\Psi(D_n,D_{n+1})\le T$ (recall \cref{loc}). Let $D_n=D[\ba^{(n)}]$ and $D_{n+1}=D[\ba^{(n+1)}]$. We create a sequence of intermediate droplets $D_n=D^{n,0}\subset D^{n,1}\subset\dots\subset D^{n,k+1}=D_{n+1}$ recursively as follows. Given $i\ge 0$ and the droplet $D^{n,i}=D[\ba^{n,i}]$, let $u^{n,i}$ be an arbitrarily chosen direction such that its dimension $m^{n,i}_{u^{n,i}}$ is not smaller than $m^{(n)}_{u^{n,i}}$ and $a^{(n+1)}_{u^{n,i}}-a^{n,i}_{u^{n,i}}\neq 0$. The fact that such a direction $u$ necessarily exists whenever $D_n\subseteq D^{n,i}\subset D_{n+1}$ follows from the classical fact that for two convex sets $A\subseteq B$, the perimeter of $A$ is at most the perimeter of $B$. Having fixed $u^{n,i}$, let 
\[\ba^{n,i+1}=\ba^{n,i}+\be_{u^{n,i}}\min\left(T,a^{(n+1)}_{u^{n,i}}-a^{n,i}_{u^{n,i}}\right)\] and define $D^{n,i+1}=D[\ba^{n,i+1}]$.

Clearly, the refined sequence satisfies the first three conditions of \cref{lem:sequence}. It therefore remains to check that
\[W(D_n,D_{n+1})\ge \sum_{i=0}^kW(D^{n,i},D^{n,i+1}).\]
To see this, recall \cref{def:W} and note that $h^u(m_u^{n,i})\le h^u(m_u^{(n)})$ whenever $a^{n,i}_u\neq a^{n,i+1}_u$ and $\sum_{i=0}^k (\ba^{n,i+1}-\ba_u^{n,i})=\ba^{(n+1)}-\ba^{(n)}$.
Thus, the existence of the sequence claimed is established.
\end{proof}
Equipped with \cref{lem:sequence}, we are ready to prove a bound on the critical growth probability.
\begin{proposition}[Critical growth]\label{creation of a critical droplet}
There exists a droplet $D[Bp^{-\alpha}]\subseteq D_p\subseteq D[Lp^{-\alpha}]$ with 
\[\P_p\left(\cI\left(D_p\right)\right)\geq \exp\left(-(2\lambda+C\e)/p^{\alpha}\right).\]
\end{proposition}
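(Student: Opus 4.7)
The plan is to construct the critical droplet by chaining together growth steps along a near-optimal droplet sequence from \cref{lem:sequence}, starting from a small subcritical seed provided by \cref{creation of a small seed}, with each step controlled by \cref{prop:filling:functional} and the whole chain combined via the Harris inequality \eqref{eq:Harris}.

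First, I would invoke \cref{lem:sequence} to fix a sequence $(D_n)_{0 \le n \le N}$ of droplets in $\bbR^2$ satisfying $D_0^Z \subseteq D[1/B]$, $D[B] \subseteq D_N^Z \subseteq D[L/2]$, $\Psi(D_n^Z, D_{n+1}^Z) \le T$, and $\sum_n W(D_n^Z, D_{n+1}^Z) \le 2\lambda + \e$. Rescaling by $p^{-\alpha}$ and translating so that the rescaled droplets are nested, I would set $\widetilde D_n := p^{-\alpha} D_n^Z \subseteq \bbZ^2$ and declare $D_p := \widetilde D_N$, which automatically satisfies the size constraints $D[Bp^{-\alpha}] \subseteq D_p \subseteq D[Lp^{-\alpha}]$ demanded by the conclusion.

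Second, I would handle the initial seed. \Cref{creation of a small seed} produces the event $\cI(D^\ast)$ for $D^\ast := D[1/(Bp^\alpha)]$ with probability at least $\exp(-\e/p^\alpha)$. Since the first element $\widetilde D_0$ of the rescaled sequence need not equal $D^\ast$, I would augment the chain by forming $\widetilde D_n^\ast := \widetilde D_n \vee D^\ast$ for $n \ge 0$, prepending $D^\ast$ itself. The key observation is that $\widetilde D_N^\ast = \widetilde D_N = D_p$ (because $D[Bp^{-\alpha}] \subseteq \widetilde D_N$ already contains $D^\ast$) and that the total locations $\Psi(\widetilde D_n^\ast, \widetilde D_{n+1}^\ast)$ remain bounded by $Tp^{-\alpha}$, thanks to the componentwise inequality $\max(x, c) - \max(y, c) \le x - y$ for $x \ge y$.

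The chain argument then runs as follows. By monotonicity of the bootstrap closure,
\[\cI(D^\ast) \cap \bigcap_{n} \cI(\widetilde D_n^\ast, \widetilde D_{n+1}^\ast) \subseteq \cI(D_p),\]
and since all events are increasing, Harris's inequality turns this into a product lower bound. To each factor $\cI(\widetilde D_n^\ast, \widetilde D_{n+1}^\ast)$ I would apply \cref{prop:filling:functional}, whose hypothesis $D_2 \subseteq D[Bp^{-\alpha}]$ extends harmlessly to $D[Lp^{-\alpha}]$ as its proof only relies on uniform convergence of $h^u_p \to h^u$ on a compact subinterval of $(0, \infty)$. This yields a factor $p^{-C} \exp(-(1+\e) W_p(\widetilde D_n^\ast, \widetilde D_{n+1}^\ast)/p^\alpha)$. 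Then, by the uniform convergence from \cref{prop:h}\ref{item:3} and the non-increasingness of $h^u$ (which ensures that augmentation by $D^\ast$ can only lower the $W$-cost relative to the un-augmented rescaled sequence), summation over $n$ gives
\[\sum_n \frac{W_p(\widetilde D_n^\ast, \widetilde D_{n+1}^\ast)}{p^\alpha} \le (1+\e) \sum_n W(D_n^Z, D_{n+1}^Z) \le (1+\e)(2\lambda + \e).\]
Combining everything, $-\log \bbP_p(\cI(D_p)) \le \e/p^\alpha + (1+\e)(2\lambda + \e)/p^\alpha + NC\log(1/p)$, which is at most $(2\lambda + C\e)/p^\alpha$ since $N$ is a constant independent of $p$.

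The main obstacle I expect is the matching between the fixed-shape seed droplet $D^\ast$ from \cref{creation of a small seed} and the shape-dependent first element of the rescaled sequence from \cref{lem:sequence}. The augmentation by Minkowski span is the natural fix, but the verification that this operation simultaneously preserves the constraint $\Psi \le T p^{-\alpha}$ on consecutive locations and does not inflate the total $W$-cost beyond $2\lambda + O(\e)$ requires the monotonicity arguments sketched above, and is the only nontrivial technical point in the proof.
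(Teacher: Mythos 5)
Your overall strategy---chaining the growth events from \cref{prop:filling:functional} along the near-optimal sequence of \cref{lem:sequence}, seeding with \cref{creation of a small seed}, and combining via Harris---is exactly the paper's. However, the detour through the span augmentation $\widetilde D_n^\ast = \widetilde D_n \vee D^\ast$ is both unnecessary and introduces a genuine gap.

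The gap is in the claim that ``the non-increasingness of $h^u$ ensures that augmentation by $D^\ast$ can only lower the $W$-cost.'' Recall $W_p(D,D')=p^\alpha\sum_u h^u_p(p^\alpha m_u)s_u$ where $\bm$ is the \emph{dimension} of the smaller droplet. You correctly argue that the locations $s_u$ cannot increase under the componentwise max of radii, but you implicitly also need the dimensions $m_u$ not to \emph{decrease}. That is false in general: taking a span $D\vee D'$ replaces radii by componentwise maxima, and if a radius $a_{u'}$ in a direction adjacent to $u$ jumps up while $a_u$ stays put, the $u$-edge can shorten (the corner is ``cut off''). A shorter edge means a larger $h^u(m_u)$, so $h^u(m_u^\ast)s_u^\ast$ versus $h^u(m_u)s_u$ can go either way. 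The monotonicity result that \emph{does} hold is \cref{chain property}, but it is for Minkowski sums $D+D'$, not spans, precisely because Minkowski addition preserves locations (\cref{loc}) without touching the dimension of the smaller summand.

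The augmentation is also avoidable: \cref{lem:sequence} already arranges $D_0^Z\subseteq D[1/B]$, so after rescaling $\widetilde D_0 = (D_0^Z)_p \subseteq D[1/(Bp^\alpha)] = D^\ast$. The mismatch you were worried about goes in the harmless direction---the seed \emph{contains} the first droplet of the chain rather than being contained in it. Hence one has directly, by monotonicity of the closure,
\[\cI(D^\ast)\cap\bigcap_{n=0}^{N-1}\cI\bigl((D_n^Z)_p,(D_{n+1}^Z)_p\bigr)\subseteq \cI\bigl((D_N^Z)_p\bigr),\]
with no augmentation at all, and the $W$-costs sum to $2\lambda+\e$ immediately by the fourth bullet of \cref{lem:sequence}. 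Your remark about extending the hypothesis of \cref{prop:filling:functional} from $D[Bp^{-\alpha}]$ to $D[Lp^{-\alpha}]$ is on point; the uniform convergence in \cref{prop:h}\ref{item:3} on a fixed compact does the work.
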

\begin{proof}
Let $D_n=D[\ba^{(n)}]$ for $n\in\{0,\dots,N\}$ be the droplets provided by \cref{lem:sequence}. Define the rescaled droplets $(D_n^Z)_p=D_n^{Zp^{-\alpha}}[\mathbf{a}^{(n)}p^{-\alpha}]$. Since $D_0^Z\subseteq D[1/B]$, we have
\[\cI\left(\left(D_N^Z\right)_p\right)\supseteq\cI\left(D[1/(Bp^\alpha)]\right)\cap\bigcap_{n=0}^{N-1}\I\left(\left(D^Z_n\right)_p,\left(D^Z_{n+1}\right)_p\right).\]
Therefore, the Harris inequality gives
\begin{align*}
\P_p\left(\cI\left(\left(D_N^Z\right)_p\right)\right)\geq{}& \P_p\left(\cI\left(D\left[1/(Bp^{\alpha})\right]\right)\right)\prod_{n=0}^{N-1}\P_p\left(\I\left(\left(D^Z_n\right)_p,\left(D^Z_{n+1}\right)_p\right)\right)\\
\geq{}& \exp\left(-\e/p^{\alpha}\right)\prod_{n=0}^{N-1} p^{-C}\exp \left(-(1+\e)\frac{W_p((D_n^Z)_p,(D_{n+1}^Z)_p)}{p^{\alpha}}\right)
\end{align*}
by \cref{prop:filling:functional,creation of a small seed}. We next apply \cref{prop:h}\ref{item:3} and \cref{def:W} and note that $N$ does not depend on $p$, to obtain
\begin{align*}
\P_p\left(\cI\left(\left(D_N^Z\right)_p\right)\right)\geq{}&\exp(-2\e/p^{\alpha})\prod_{n=0}^{N-1}\exp \left(-(1+\e)^2\frac{W(D_n^Z,D_{n+1}^Z)}{p^{\alpha}}\right)\\
\geq{}& \exp\left(-\frac{2\e+(1+\e)^2(2\lambda+\e)}{p^{\alpha}}\right),\end{align*}
further using the fourth property of the sequence $(D_n)$ in \cref{lem:sequence}. The desired result follows since $C$ is large and $\varepsilon$ small enough.
\end{proof}

Once we are past the critical scale, growth becomes easy, as shown by the following result.
\begin{corollary}[Supercritical growth]
\label{creation of a large droplet}We have
\[\P_p\left(\cI\left(D\left[p^{-3W}\right]\right)\right)\geq \exp\left(-(2\lambda+2C\e)/p^{\alpha}\right).\]
\end{corollary}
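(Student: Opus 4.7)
The plan is to push the critical droplet $D_p$ from \cref{creation of a critical droplet} outward to reach size $p^{-3W}$ through a chain of geometrically-growing extensions. At supercritical scales ($p^{\alpha}m_{u}\gg 1$) the function $h^{u}_{p}$ decays exponentially by \cref{prop:h}\ref{item:2}, so each extra step should be cheap and the accumulated extra cost negligible compared to $\varepsilon/p^{\alpha}$.

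Concretely, I would construct a sequence $\tilde D^{(0)} = D_p \subseteq \tilde D^{(1)} \subseteq \cdots \subseteq \tilde D^{(M)} = D[p^{-3W}]$ with growth factor $\rho = 1 + 1/K$ per step, requiring $M = O(\log(1/p))$ steps. With this choice $\Psi(\tilde D^{(n)}, \tilde D^{(n+1)})$ is a small multiple of $\min_{u} m_{u}(\tilde D^{(n)})$, so the trapezoid construction in the proof of \cref{prop:filling:functional} still produces rectangles $R^{u}$ of width $\tilde m_{u} \ge m_{u}(\tilde D^{(n)})/2$ (picking $K$ large). The traversability event $\bigcap_{u}\cT(R^{u})$ then implies $\cI(\tilde D^{(n)}, \tilde D^{(n+1)})$ exactly as in \cref{claim:EI}, giving the lower bound $\P_p(\cI(\tilde D^{(n)},\tilde D^{(n+1)}))\ge p^{-C}\prod_{u}\exp(-h^{u}_{p}(p^{\alpha(u)}\tilde m_{u}^{(n)})s_{u}^{(n)})$ via \cref{cor:traversability}.

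For $u\in\cS_{\alpha}$ the relevant scale satisfies $p^{\alpha}\tilde m_{u}^{(n)}\ge L/2$, so instead of using uniform convergence as in \cref{eq:filling:3} (which is not available off compacta), I would apply the direct tail bound $h^{u}_{p}(x)\le -\log(1-e^{-cx})\le 2e^{-cx}$ from \cref{prop:h}\ref{item:2}; contributions from $u\in\cS\setminus\cS_{\alpha}$ are handled exactly as in \cref{eq:filling:2}. Chaining the $M$ steps with $\cI(D_p)$ via the Harris inequality \eqref{eq:Harris} and summing the resulting telescoping geometric series,
\[\sum_{n=0}^{M-1}\sum_{u\in\cS_{\alpha}}2s_{u}^{(n)}e^{-cp^{\alpha}m_{u}^{(n)}/2}\;=\;O\!\left(p^{-\alpha}e^{-cL/2}\right)\;\le\;\varepsilon/p^{\alpha}\]
for $L$ large enough, while the polynomial prefactor $p^{-CM}=e^{O(\log^{2}(1/p))}$ is subdominant. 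Combining with \cref{creation of a critical droplet} then yields $\P_p(\cI(D[p^{-3W}]))\ge \exp(-(2\lambda+C\varepsilon)/p^{\alpha})\cdot\exp(-\varepsilon/p^{\alpha}-o(p^{-\alpha}))\ge \exp(-(2\lambda+2C\varepsilon)/p^{\alpha})$.

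The only mild obstacle is that \cref{prop:filling:functional} is formally stated for $D_{2}\subseteq D[Bp^{-\alpha}]$, a hypothesis used solely in \cref{eq:filling:3} to invoke uniform convergence of $h^{u}_{p}$ to $h^{u}$ on the compact set $[Z/C,BC]$. Beyond the critical scale this replacement is not needed, since the explicit tail bound of \cref{prop:h}\ref{item:2} already makes the contribution exponentially small; the remainder of the argument goes through verbatim.
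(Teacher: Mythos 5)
Your argument follows the paper's proof essentially step by step: start from the critical droplet $D_p$ of \cref{creation of a critical droplet}, grow it exponentially to $D[p^{-3W}]$ through $O(\log(1/p))$ intermediate droplets, bound each growth step by traversability of the rectangles in the annuli via \cref{cor:traversability}, and control the accumulated cost with the explicit tail bound of \cref{prop:h}\ref{item:2} (rather than the uniform convergence of \cref{prop:h}\ref{item:3}, which is restricted to compacta). Your observation at the end—that \cref{prop:filling:functional} cannot be invoked as a black box here because of its $D_2\subseteq D[Bp^{-\alpha}]$ hypothesis, but that the needed supercritical estimate comes directly from \cref{prop:h}\ref{item:2}—is exactly why the paper instead models this step on the proof of \cref{creation of a small seed}.

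One small inaccuracy: you assert that the relevant scale satisfies $p^{\alpha}\tilde m_{u}^{(n)}\ge L/2$ and accordingly conclude with ``for $L$ large enough''. But the inclusion $D_p\subseteq D[Lp^{-\alpha}]$ is an \emph{upper} bound on $D_p$; the lower bound is $D_p\supseteq D[Bp^{-\alpha}]$, so the dimensions entering the first (and dominant) step of the chain are of order $Bp^{-\alpha}$, not $Lp^{-\alpha}$. The geometric sum is therefore $O(p^{-\alpha}e^{-cB/C})$, and it is $B$ that must be taken large (as the paper does: ``Taking $B$ large, the above product can be made larger than $\exp[-\e/p^{\alpha}]$''). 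Since $B\gg 1/\e$ in the constant hierarchy, the conclusion is unaffected, but the argument as written overstates the decay by invoking $L$. Also, the polynomial prefactor you should carry is $p^{+CM}$ (a small quantity depreciating the lower bound), not $p^{-CM}$; this is a sign slip that again does not affect the final inequality since $\log^2(1/p)=o(p^{-\alpha})$.
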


\begin{proof}
\Cref{creation of a critical droplet} implies that there exists a droplet $D_p=D[\ba]\supseteq D(Bp^{-\alpha})$ and
\[
\P_p(\cI(D_p))\geq \exp(-(2\lambda+C\e)/p^{\alpha}).\]
We may then proceed as in the proof of \cref{creation of a small seed}, growing the droplet dimensions exponentially. This leads to 
\begin{multline*}
\bbP_p\left(\cI\left(D\left[p^{-3W}\right]\right)\right)\\\ge \exp\left(-\frac{2\lambda +C\e}{p^{\alpha}}\right)p^{NWV|\cS|}\prod_{i=0}^{N-1}\exp\left(\frac{|\cS|(\kappa^{i+1}-\kappa^i)CB}{p^\alpha}\log\left(1-e^{-\kappa^iB/C}\right)\right),\end{multline*}
where $\kappa>1$ is a constant close enough to 1 and we assumed for simplicity that $p^{-3W}=\kappa^N Bp^{-\alpha}$ for some integer $N$. Taking $B$ large, the above product can be made larger than $\exp[-\e/p^{\alpha}]$ and we have that $N$ is logarithmic in $1/p$, so the conclusion follows.
\end{proof}

Finally, we can conclude the proof of the upper bound of \cref{th:main} in the usual way following \cite{Aizenman88}. The idea is as follows. We consider disjoint translates of $D[p^{-3W}]$ up to such a distance from the origin that it becomes likely for one to be internally filled in view of \cref{creation of a large droplet}. Furthermore, we show that it is likely that all segments of length $p^{-3W}$ in this volume contain a $W$ helping set. These $W$-helping sets are sufficient to allow the droplet to grow until it reaches the origin.
\begin{proof}[Proof of \cref{th:upper}]
Fix $\Lambda=\exp((\lambda+2C\e)/p^{\alpha})$. Let $\cE$ be the event that for all $u\in\cS$, every translate of the rectangle $R^u(p^{-3W},1)$ included in $D[\Lambda]$ contains a $W$-helping set. The probability of this event can be bounded from below by
\[\P_p(\E)\geq \left(1-\left(1-p^{W}\right)^{\lfloor p^{-3W}/W\rfloor}\right)^{|\cS|\cdot|D[\Lambda]\cap\bbZ^2|}\to 1.\]

Denote by $\mathcal F$ the event that there exists a translate of $D[p^{-3W}]$ included in $D[\Lambda]$ which is internally filled. Applying \cref{creation of a large droplet} and fitting $(\Lambda p^{3W})^2/C$ disjoint translates of $D[p^{-3W}]$ into $D[\Lambda]$, one obtains
\[\P_p(\mathcal F)\geq 1-\left(1-\exp[-(2\lambda+2C\e)/p^{\alpha}]\right)^{(\Lambda p^{3W})^2/C}\to 1.\]

Moreover, the simultaneous occurrence of $\E$ and $\mathcal F$ implies that $p^{\alpha}\log \tau\leq \lambda+3C\e$ for $p$ small enough. Indeed, each site in the internally filled translate of $D[p^{-3W}]$ granted by $\cF$ becomes occupied in time at most $|D[p^{-3W}]\cap\bbZ^2|$, since at least one new site becomes infected at each step. After the creation of this supercritical droplet, it only takes a time of order $p^{-3W}\Lambda$ to progress and reach 0, thanks to the event $\E$. More precisely, growing one of the radii of our droplet by 1 only requires a time of order $p^{-3W}$ regardless of its size, since each $W$-helping set grows linearly along its edge (recall \cref{def:W:helping}). The Harris inequality yields
\[\P_p\left(p^{\alpha}\log T\leq \lambda+3C\e\right)\geq \P_p(\E\cap\mathcal F)\geq \P_p(\E)\P_p(\mathcal F)\rightarrow 1\]
which concludes the proof of \cref{th:upper}, since $C\varepsilon\ll 1$.
\end{proof}

\section{Proof of the lower bound}
\label{sec:lower}
We next turn to the lower bound.
\begin{theorem}
    \label{th:lower}
    Let $\cU$ be a symmetric isotropic update family of difficulty $\alpha$. Then, recalling $\lambda$ from \cref{def:W}, for any $\varepsilon>0$, we have
    \[\lim_{p\to 0}\bbP_p\left(p^\alpha\log\tau>\lambda-\varepsilon\right)=1.\]
\end{theorem}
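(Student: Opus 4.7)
\textbf{The plan} is to follow the Aizenman--Lebowitz-plus-hierarchies paradigm of \cite{Holroyd03}, adapted to general symmetric isotropic $\cU$ via the droplet and traversability machinery of \cref{sec:setup}. I would proceed in four steps.

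\emph{Step 1 (Aizenman--Lebowitz reduction).} I would first show that if $\tau \le e^{(\lambda-\varepsilon)/p^\alpha}$, then with probability tending to $1$ there is some translate of a \emph{critical} droplet $D$ with $D[Bp^{-\alpha}] \subseteq D \subseteq D[Lp^{-\alpha}]$ that is internally filled and is contained in the centred box $D[e^{(\lambda-\varepsilon)/p^\alpha}]$. This is the classical covering argument: one tracks the infection history of the origin (which must be infected by time $\tau$ on this event) using a dyadic scale of droplet sizes to extract an intermediate-size internally filled droplet. Symmetry of $\cU$ is used here to ensure that spans of infected sets are covered by droplets in the sense of \cref{def:droplet}.

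\emph{Step 2 (Hierarchies).} For any critical $D$, I would build a hierarchical decomposition of $\cI(D)$ in the style of \cite{Holroyd03,Bollobas23}. A \emph{hierarchy} is a rooted tree whose vertices $v$ are labelled by droplets $D_v$, with root $D$, leaves (\emph{seeds}) of diameter $O(p^{-\alpha})$ (strictly smaller than $Bp^{-\alpha}$), and internal vertices that are either \emph{growth steps} (one child $D_{v'} \subseteq D_v$ with $\Psi(D_{v'}, D_v) \in (Tp^{-\alpha}/2, Tp^{-\alpha}]$) or \emph{splits} ($D_v = D_{v_1} \vee D_{v_2}$ for two children droplets). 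On the event $\cI(D)$, one shows the existence of a hierarchy whose seeds are internally filled and each of whose growth steps is witnessed, for every $u \in \cS$, by a traversable rectangle $R^u$ lying in the trapezoid $D_v \setminus D_{v'}$, with all these witnesses \emph{disjoint}. The disjointness, enforced via a ``pod''-type projection onto droplet boundaries generalising \cite{Holroyd03}, is crucial in order to apply BK \cref{eq:BK}.

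\emph{Step 3 (Probability bound and variational matching).} For a fixed hierarchy $\cH$, BK together with \cref{cor:traversability} give
\[
\bbP_p(\cH \text{ is satisfied}) \le \prod_{v\to v'\text{ growth}} \exp\!\left(-(1-o(1))\,\frac{W_p(D_{v'}, D_v)}{p^\alpha}\right) \cdot \prod_{\text{seeds}} \bbP_p(\cI(D_{\text{seed}})),
\]
using \cref{prop:h}\ref{item:3} to replace $h^u_p$ by $h^u$ on the compact range of relevant arguments. The key estimate is that the sum of the growth-step energies, along a backbone of $\cH$ and after rescaling by $p^\alpha$, is at least $2\lambda - o(1)$. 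This I would obtain by extending the rescaled monotone backbone to a bi-infinite sequence $\cD \in \mathfrak{D}$ via negligibly-costly dyadic extensions towards $\{0\}$ and $\mathbb{R}^2$ (using the slow growth of $h^u$ near $0$ and its exponential decay near $\infty$ from \cref{prop:h}\ref{item:2}), and then appealing to $\cW(\cD) \ge \lambda$, i.e.\ $\sum W \ge 2\lambda$. Multi-branch hierarchies only increase the total energy beyond that of a single backbone, so the bound extends. Seeds contribute a sub-exponential factor $p^{-O(1)}$, since an internally filled seed must in particular contain at least one helping set.

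\emph{Step 4 (Union bound).} The number of hierarchies rooted at a fixed position is at most $p^{-O(1)}$, since droplet dimensions and locations are effectively discretised at scale $Tp^{-\alpha}$ inside $D[Lp^{-\alpha}]$. Combined with the $\le e^{2(\lambda-\varepsilon)/p^\alpha}$ candidate positions from Step 1, a union bound yields
\[
\bbP_p\!\left(\tau \le e^{(\lambda-\varepsilon)/p^\alpha}\right) \le p^{-O(1)} \cdot e^{2(\lambda-\varepsilon)/p^\alpha} \cdot e^{-2(\lambda - o(1))/p^\alpha} \longrightarrow 0.
\]

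\emph{Main obstacle.} The technical heart lies in Step 2 and the energy lower bound of Step 3. Ensuring that growth steps admit disjoint traversable witnesses in each direction $u \in \cS$ requires a pod-style truncation adapted to our polygonal droplets (rather than the rectangles of \cite{Holroyd03}). The variational matching in Step 3 further relies on the symmetry assumption, which guarantees that the centred droplets of \cref{def:droplet} correspond to genuinely bi-infinite sequences in $\mathfrak{D}$ with matching growth in all pairs of opposite directions of $\cS_\alpha$, producing the factor $2$ in front of $\lambda$ (and accounting for the $\frac{1}{2}$ in the definition of $\cW$).
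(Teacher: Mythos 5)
Your plan follows the same architecture as the paper (Aizenman--Lebowitz reduction, hierarchies, union bound, and a variational matching step), but it contains a genuine gap at the heart of Step 2 that would make the proof fail as written.

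You claim that on the event $\cI(D)$, one can extract a hierarchy ``each of whose growth steps is witnessed, for every $u\in\cS$, by a traversable rectangle $R^u$.'' This is false: traversability $\cT(R^u)$ from \cref{def:traversable} is a \emph{sufficient} condition for a droplet to grow across the trapezoid — it is precisely the mechanism exhibited in \cref{sec:upper} to prove the \emph{upper} bound on $\tau$. It is not a \emph{necessary} condition; the infection could grow via configurations that do not contain helping sets on every line of $R^u$ (for instance by exploiting infections near corners, or large connected clusters that the traversability definition explicitly excludes). For a lower bound on $\tau$ you must identify a \emph{necessary} event and bound its probability from above. The paper does this through the \emph{spanning} event $\cE(D_1,D_2)$ (\cref{def:spanning}), which is bounded via the \emph{crossing} event $\cC^u(m,n,E)$ with boundary condition $E$ (\cref{eq:def:CumnE}). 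The core technical work, \cref{upper bound half-plane} and \cref{prop:bound:E}, consists of showing that crossing with general boundary conditions is \emph{almost} as expensive as traversability, which requires controlling ``error events'' (large $3K$-connected clusters, infections near the rectangle boundary, the leftover corner region $X$ in \cref{prop:bound:E}). Your proposal does not engage with boundary conditions at all. Relatedly, the correct event in Step 1 is spanning, not internal filling: the covering/AL argument (\cref{lem:AL}) only yields a spanned droplet, never an internally filled one.

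Two secondary omissions: first, the reduction of a hierarchy with splitters to a single monotone chain — which you wave at with ``multi-branch hierarchies only increase the total energy'' — requires the Minkowski-sum transformation of \cref{dimension,chain property,pod of H} and is not immediate. Second, the lower bound is carried out with $\cS_\alpha$-droplets rather than $\cS$-droplets, and matching the resulting sum with $2\lambda$ requires both \cref{prop:lambda:alpha} ($\lambda=\lambda_\alpha$) and the truncation argument of \cref{lem:truncation}; you mention dyadic extensions at $\pm\infty$, which is the right idea, but the finite low end is not cheap unless the starting droplet is first rounded to be approximately circular, which is an extra step you would need to supply.
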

Our aim is to control all possible ways of creating large droplets. For the rest of \cref{sec:lower}, we fix $\cU$ as in \cref{th:lower}.
\subsection{Upper bound on the probability of growth}
Since the process is not obliged to form droplets, but could instead use more complicated shapes, we will need some further notions to suitably reduce them to droplets.
\begin{definition}[$\Delta$-connected]
Given $\Delta>0$, we say that a set $X\subseteq\bbZ^2$ is \emph{$\Delta$-connected} if it is connected in the graph $\Gamma=(\bbZ^2,\{\{x,y\}:\|x-y\|\le \Delta\}\})$.
\end{definition}
It is known that there exists a constant $K=K(\cU)>0$ such that for all stable directions $u$ and all sets $S\subset \bbZ^d$ such that $S\not\in\cH^u$ and $|S|\le \alpha(u)$, we have
\begin{equation}
\label{eq:K:1}\max\left\{d(x,S):x\in [S\cup\bbH_u]\setminus\bbH_u\right\}<K/3\end{equation}
(see \cite{Hartarsky20a} for an explicit bound on $K$). In particular, applying this to both $u$ and $-u$, we see that for any $S\subset\bbZ^d$ such that $|S|<\alpha(u)$ we have 
\begin{equation}\label{eq:K:2}\max\{d(x,S):x\in[S]\}<K/3.\end{equation}
We further assume $K$ large enough so that for any stable $u$ and any $S\in \cH^u$ we have $\diam(S)<K/3$ and $\max\{\|x\|:x\in\bigcup_{U\in\cU}U\}<K/3$.

\begin{definition}[Spanning]
\label{def:spanning}
For two $\cS_\alpha$-droplets $D_1\subseteq D_2$, let $\E(D_1,D_2)$ be the event that there exists a $K$-connected set $X\subseteq [(A\cap D_2)\cup D_1]$ such that every $\cS_\alpha$-droplet containing $X$ also contains $D_2$.

We further write $\cE(D)=\cE(\varnothing,D)$ for any $\cS_\alpha$-droplet $D$ and say that $D$ is \emph{spanned} when $\cE(D)$ occurs.
\end{definition}
Spanning events $\cE$ will play a similar role to the filling events $\cI$ used for the upper bound in \cref{sec:upper}, so our first step is again to link them to the function $W$ from \cref{def:W}. The next proposition makes use of our symmetry assumption.
\begin{proposition}
\label{prop:bound:E}
For any $\cS_\alpha$-droplets $D_1\subseteq D_2$ satisfying $\Phi(D_2)\le CBp^{-\alpha}$ and $\Psi(D_1,D_2)\leq Tp^{-\alpha}$, we have
\begin{equation}
    \P_p\left(\E(D_1,D_2)\right)\leq C\exp \left(-(1-\e)^2\frac{W_p(D_1^{Zp^{-\alpha}},D_2^{Zp^{-\alpha}})}{p^{\alpha}}\right).\label{upper bound droplet}
\end{equation}
\end{proposition}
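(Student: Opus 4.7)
The plan is to mirror the strategy of \cref{prop:filling:functional} in reverse: for each $u\in\cS_\alpha$ we extract from the spanning event a traversability event inside a rectangular slab adjacent to the $u$-face of $D_1$, observe that these slabs are pairwise disjoint sub-regions of $D_2$, and then multiply the resulting probabilities using independence together with \cref{prop:h}\ref{item:1}.

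Let $\ba$ be the radii of $D_1$ and $\bs=\bb-\ba$ the location of $D_1\subseteq D_2$. For each $u\in\cS_\alpha$, mimicking \cref{eq:def:mutilde,eq:def:Ru}, choose an $\tilde m_u\times s_u$ rectangle $R^u\subseteq D[\ba+\be_us_u]\setminus D[\ba]$ (as in \cref{fig:upper bound}). These rectangles lie in disjoint trapezoidal pieces of $D_2\setminus D_1$, one per $\cS_\alpha$-face, and they fit inside $D_2$ because $\Psi(D_1,D_2)\le Tp^{-\alpha}$ is much smaller than the relevant dimensions of $D_1$ thanks to $T\ll Z\ll B$. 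The core combinatorial claim is that on $\cE(D_1,D_2)$, for each $u\in\cS_\alpha$ and each integer $j$ in a range of length $s_u-O(V)$, the line $l_u(j)$ (counted from the $u$-face of $D_1$) carries a helping set for $u$ inside $A\cap R^u$, so that $\cA^u(\tilde m_u,s_u-O(V))$ occurs within $R^u$. Indeed, any site of the $K$-connected spanning set $X$ lying strictly beyond $l_u(j)$ (on the $-u$ side) must have been produced by bootstrap closure from $A\cap D_2$ and $D_1$; by the definition of $\alpha(u)$ in \cref{eq:def:alpha} together with the range bound \cref{eq:K:1}, such closure cannot cross $l_u(j)$ inside $R^u$ unless $A\cap l_u(j)$ already contains a helping set for $u$ within $K/3$ of $X$. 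The symmetry assumption enters precisely here: it forces $\cS_\alpha$ to be itself symmetric and guarantees that the spanning set must reach every $\cS_\alpha$-face of $D_2$, so that the slab-by-slab argument applies in each direction of $\cS_\alpha$.

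Since the rectangles $\{R^u:u\in\cS_\alpha\}$ are pairwise disjoint, the events above depend on disjoint sets of sites and are therefore independent. Combining with \cref{prop:h}\ref{item:1} and the fact that $\alpha(u)=\alpha$ for every $u\in\cS_\alpha$ yields
\[\bbP_p(\cE(D_1,D_2))\le\prod_{u\in\cS_\alpha}\exp\left(-h_p^u(p^\alpha\tilde m_u)(s_u-O(V))\right).\]
To convert this into $W_p$ evaluated at the enlarged droplets $D_i^{Zp^{-\alpha}}$, let $m_u'$ be the $u$-dimension of $D_1^{Zp^{-\alpha}}$; by \cref{loc} the corresponding location is still $s_u$, while $m_u'=m_u+Zp^{-\alpha}$ and $\tilde m_u=m_u-O(Tp^{-\alpha})$. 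Since $T\ll Z\ll B$, both $p^\alpha\tilde m_u$ and $p^\alpha m_u'$ remain in a compact sub-interval of $(0,\infty)$ and differ by $O(Z)$, so \cref{prop:h}\ref{item:3} together with uniform continuity of $h^u$ there gives $h_p^u(p^\alpha\tilde m_u)\ge(1-\e)h_p^u(p^\alpha m_u')$. Similarly $(s_u-O(V))/s_u\ge 1-\e$ once $s_u$ is not of bounded order, while the finitely many exceptional terms contribute a bounded multiplicative factor that is absorbed into the prefactor $C$. Rearranging the product using \cref{def:W} produces the stated bound.

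The genuinely delicate point is the combinatorial claim above: one must extract a full stack of $\Omega(s_u)$ helping sets in every $\cS_\alpha$-slab from the mere existence of a $K$-connected spanning set. Proving it calls for a layer-by-layer sweep of each $R^u$ from the $u$-face of $D_2$ back towards that of $D_1$, bookkeeping the $K$-neighbourhood of $X$ at each level and repeatedly invoking \cref{eq:K:1}. Symmetry of $\cU$ is used to forbid $X$ from growing asymmetrically in a way that would bypass the helping-set requirement in some slab while still spanning $D_2$.
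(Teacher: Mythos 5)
There is a genuine gap, and it is exactly at the step you flag as ``the genuinely delicate point'': your core combinatorial claim is false as stated, and no layer-by-layer sweep will rescue it without substantially more machinery.

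First, the event $\cE(D_1,D_2)$ does \emph{not} force (almost) every line in $R^u$ to carry a helping set inside $A\cap R^u$. The $K$-connected spanning set $X$ can cross the slab without producing occupied lines at all --- for instance, a $K$-chain of isolated infections running diagonally across $R^u$ connects one side to the other and makes the droplet span, yet not a single $l_u(j)$ needs a helping set. What the spanning event does force is the \emph{crossing} event $\cC^u(m,n,E)$ of \cref{eq:def:CumnE}, which is strictly weaker than $\cA^u$, and bounding its probability is precisely the content of \cref{upper bound half-plane}. That lemma is nontrivial: its proof slices the strip into blocks of fixed height $k$, introduces the error event $\cE^u(m,k)$ (\cref{Emlu}), and proves the dichotomy in \cref{eq:ar} that a crossing must push an occupied-line stack in from one side or the other. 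None of that is a corollary of \cref{eq:K:1} alone; you would be re-proving \cref{upper bound half-plane} inside your own argument.

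Second, and more seriously, the rectangles $\{R^u\}_{u\in\cS_\alpha}$ do not partition $D_2\setminus D_1$: there is a leftover region $X$ of trapezoidal slivers and corners, and it is not negligible for your combinatorial claim. A single small cluster of infections sitting in a corner of $X$ can serve as a bridge that lets the spanning set cross an adjacent slab for free, with no helping set present in that rectangle at all. The paper handles this by conditioning on $A\cap X$, feeding it in as the boundary condition $E$ in $\cC^u(m,n,E)$, and introducing the count $J_E^u$ of lines that receive illegitimate help from the boundary; it then splits into the cases $J_E^u\le\e s_u/L$ (a negligible correction absorbed by the $(1-\e)$ factor) and $J_E^u>\e s_u/L$ (rare, by a union bound over configurations with many clusters in the small set $X$, because $|X|\le Cs_uTp^{-\alpha}$ with $T$ tiny). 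Your argument contains no analogue of this step, and without it the factorisation you write down simply does not follow from $\cE(D_1,D_2)$. (Note also that after conditioning on $A\cap X$, the crossings of distinct $\bar R^u$ become independent --- this is the correct version of your ``disjoint sets'' remark, which on its own is dubious since $\cA^u(m,n)$ depends on lines beyond the rectangle via \cref{eq:taller:rectangle}.)

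Finally, your explanation of where symmetry enters is not where the paper uses it. Symmetry of $\cU$ is invoked inside the proof of \cref{upper bound half-plane}, specifically in \cref{eq:CumkE}: the dichotomy argument needs to propagate occupied lines from either end of the slab, which requires both $u$ and $-u$ to be isolated stable directions with the same $\cA^{\pm u}$ estimates. The fact that $X$ must reach every $\cS_\alpha$-face of $D_2$ is already encoded in \cref{def:spanning} and does not need symmetry.
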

The idea behind the proof is as follows, roughly following \cite{Holroyd03}. Our goal is to bound the probability of $\cE(D[\ba],D[\bb])$ for $\bb\ge\ba$ very close. Thinking of $D[\ba]$ with not very small dimensions, we can cut $D[\bb]\setminus D[\ba]$ into rectangles like the shaded ones in \cref{fig:upper bound} and some remaining small leftover region. We will treat the leftover region as `boundary condition', since it could, in principle, help for growing the small droplet in multiple directions simultaneously. Yet, it is small, so it is unlikely to find a lot of infections there. Once the boundary condition is fixed, the rectangles become independent and each of them needs to be `crossed' separately. Before turning to the proof of \cref{upper bound droplet}, let us first discuss a lemma, which deals with crossing one of these trapezoids. 

For any $m,n\in \mathbb N$, define the {\em strip}
\[S^u(n)=\{x\in \Z^2:0\le\langle x,u\rangle< n\rho_u\}=\bigcup_{i=0}^{n-1}l^u(i).\]
Also, consider the \emph{crossing} events
\begin{multline}
\label{eq:def:CumnE}\mathcal C^u(m,n,E)\\=\left\{l^u(0)\text{ and }l^u(n)\text{ $K$-connected in }[(A\cap R^u(m,n))\cup (\Z^2\setminus S^u(n))\cup E]\right\},\end{multline}
where $E\subseteq \Z^2\setminus R^u(m,n)$ is viewed as a ``boundary condition''. For such a set $E$, define $J_E$ to be the number of $j\in\{0,\dots, n-1\}$ such that $l_u(j)$ is at distance at most $3K$ from a $3K$-connected set of cardinality $\alpha(u)$ in $E$.

\begin{lemma}\label{upper bound half-plane}Let $u$ be a stable direction. For $m\in[Tp^{-\alpha(u)},CBp^{-\alpha(u)}]$, $n\ge 1/T$ and $E\subseteq \Z^2\setminus R^u(m,n)$, we have
\[\P_p\left(\mathcal C^u(m,n,E)\right)\leq \exp\left(-(1-\e)h_p^u\left(p^{\alpha(u)}m\right)\left(n-LJ_E\right)\right).\]\end{lemma}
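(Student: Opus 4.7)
The plan follows the strategy pioneered by Holroyd~\cite{Holroyd03} for the $2$-neighbour model, adapted to general isotropic families and extended to handle the boundary condition $E$. The idea is to identify, within $\cC^u(m,n,E)$, a near-product structure in which most lines $l_u(j)$ in the strip are forced to be occupied by $A\cap R^u(m,\infty)$ alone, with only those lines near a helping-sized cluster of $E$ potentially escaping this constraint.

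First I would introduce the set $B\subseteq\{0,\dots,n-1\}$ of \emph{bad} indices, namely those $j$ for which $l_u(j)$ is within Euclidean distance $L/10$, say, of a $3K$-connected subset of $E$ of cardinality $\alpha(u)$. Any such subset of $E$ has diameter at most $3K\alpha$, so it is within distance $3K$ of at most $O(1)$ lines (each such contribution counted by $J_E$) and within distance $L/10$ of at most $O(L)$ lines (counted in $B$); with $L$ sufficiently large relative to $K$ and $\alpha$, this yields $|B|\le LJ_E/4$. The central claim is then that on $\cC^u(m,n,E)$, every $j\in\{0,\dots,n-1\}\setminus B$ has $l_u(j)$ occupied in $A\cap R^u(m,\infty)$ in the sense of \cref{def:occupied}. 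Indeed, a $K$-connected witness $X\subseteq[(A\cap R^u(m,n))\cup(\Z^2\setminus S^u(n))\cup E]$ must come within distance $K$ of every line $l_u(j)$ with $0\le j\le n-1$. Any element of $X$ near $l_u(j)$ is produced by the bootstrap closure from $A\cap R^u(m,n)$, the outer half-planes $\Z^2\setminus S^u(n)$, and $E$. Stability of $u$ and $-u$ (the latter using the symmetry of $\cU$) together with \cref{eq:K:1} ensures the outer half-planes alone generate infection only within $K/3$ of their boundaries, which cannot reach $l_u(j)$ when $n\ge 1/T$ is large enough. Because $j\notin B$, no $3K$-connected subset of $E$ of size $\alpha(u)$ lies within distance $L/10$ of $l_u(j)$; by \cref{eq:K:2} applied to the restriction of $E$ to such a neighbourhood, this portion of $E$ cannot by itself produce infections on $l_u(j)$ either. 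Thus the crossing of $l_u(j)$ must be produced by the closure of $(A\cap R^u(m,n))\cup\bbH_u$ (or the mirror half-plane from above), which by the definition of $\alpha(u)$ requires a helping set of $A$ on $l_u(j)$ inside $R^u(m,\infty)$.

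Given the claim, I would partition $\{0,\dots,n-1\}\setminus B$ into $r\le|B|+1$ maximal runs $I_1,\dots,I_r$ of consecutive indices. Up to translation, the event that all lines of $I_i$ are occupied in $A\cap R^u(m,\infty)$ coincides with $\A^u(m,|I_i|)$. After slightly enlarging $B$ to insert a constant buffer between consecutive runs (accommodating the overhead of \cref{eq:taller:rectangle}), these events for different runs depend on disjoint portions of $A$ and are therefore independent. Applying \cref{prop:h}\ref{item:1} to each run and multiplying yields
\[
\P_p\left(\cC^u(m,n,E)\right)\le \exp\left(-h_p^u\left(p^{\alpha(u)}m\right)\left(n-|B|-rV\right)\right).
\]
Since $|B|+rV\le LJ_E/2+V$ for $L\gg V$ and since $n\ge 1/T\gg V/\e$, the right-hand side is bounded above by $\exp(-(1-\e)h_p^u(p^{\alpha(u)}m)(n-LJ_E))$, which is the desired bound. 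The main obstacle is the central claim itself: rigorously ruling out that distant portions of $E$, channelled through the bootstrap closure and the two outer half-planes, conspire to cross a good line without using any helping set of $A$ on it. This requires a careful isolation argument inside the closure operation, relying crucially on stability of $\pm u$ and on the combinatorial characterisation of $\alpha(u)$ via helping sets.
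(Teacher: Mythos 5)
Your high-level template (good lines vs.\ bad lines near a helping-sized cluster of $E$, then a product over runs of good lines) is in the spirit of the paper's proof, but the central claim on which everything hinges is false, and the paper's argument is structured precisely to avoid having to assert it.

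You claim that on $\cC^u(m,n,E)$ every good index $j$ has $l_u(j)$ occupied in $A\cap R^u(m,\infty)$. This is too strong, for two reasons. First, the crossing can be built from both ends of the strip simultaneously: the bottom half-plane $\bbH_u$ together with helping sets on lines $l_u(0),\dots,l_u(j_0-1)$ advances the front up to $\bbH_u(j_0)$, while the top half-plane together with helping sets for $-u$ on lines $j_1,\dots,n-1$ advances the other front down to line $j_1$. Once the two fronts are within bounded distance of each other (a window of order $K$ lines), the intervening lines can become infected by the combined action of the two advanced half-planes without themselves carrying any helping set. Your invocation of \cref{eq:K:1}--\cref{eq:K:2} controls only how far a \emph{single} small component spreads next to a \emph{single} half-plane; it does not control the iterated growth in which a fully infected line becomes part of the boundary condition for the next step. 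This is exactly why the paper's Claim (`eq:ar`) only concludes that \emph{either} the bottom line is $u$-occupied \emph{or} the top line is $(-u)$-occupied, peels one such line off, and iterates, leaving a window of $N=3K(\alpha(u)+1)$ middle lines that are allowed to be unoccupied. Second, even where a line is occupied, it may be occupied in direction $-u$ rather than $u$, which is a different event; your argument treats every good line as if it contributes a factor to $\cA^u$, whereas the paper ends up with a convolution of $\cA^u$ and $\cA^{-u}$ events summed over the position of the split point.

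Because the claim fails, the clean ``partition good indices into runs and multiply $\P_p(\cA^u(m,|I_i|))$'' step does not follow: each run would in fact produce a sum over split points of a product $\P_p(\cA^u)\cdot\P_p(\cA^{-u})$ together with a polynomial counting factor, and the $N$-line leakage per run has to be absorbed. The paper absorbs all of this by working strip-by-strip with a fixed thick slab $k=L/3$, bounding the error event $\cE^u(m,k)$ separately (\cref{Emlu}), and accepting the $(k+2)$-prefactor and $(k-N)$ loss in \cref{eq:CumkE}. To repair your proposal you would need to replace the occupied-for-every-good-$j$ claim by the weaker peeling conclusion and redo the final counting accordingly; once done, the two arguments essentially coincide.
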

\begin{proof}
We prove the result by slicing the rectangle $R^u(m,n)$ into rectangles of fixed (but large) height $k=L/3$, which we assume divisible by $1/\rho_u^2$.

We start by crudely bounding the probability of an `error' event. Let $\E^u(m,k)$ be the event that $A\cap R^u(m,k)$ contains a $3K$-connected set of size $\alpha(u)+1$ or there is a site $a\in A\cap R^u(m,k)$ such that $\<a,u^\perp\>\in[0,3K)\cup[m-3K,m)$. In words, $\cE^u(m,k)$ occurs if there is an unexpectedly large cluster of infections or there is an infection close to the boundary of $R^u(m,k)$.
\begin{claim}
\label{Emlu}
In this setting, $\bbP_p(\cE^u(m,k))\le \exp(-h^u_p(p^{\alpha(u)}m)k)$.
\end{claim}
\begin{proof}
The number of $3K$-connected sets of size $\alpha(u)+1$ in $R^u(m,k)$ is bounded by $Mkm$ for some constant $M=M(K)>0$. Similarly, 
\[\left|\left\{a\in R^u(m,k):\<a,u^\perp\>\in[0,3K)\cup[m-3K,m),\<a,u\>\in[0,\rho_uk)\right\}\right|\le 6Kk.\]
Therefore, since $mp^{\alpha(u)}\le CB$, a union bound gives
\[\P_p\left(\E^u(m,k)\right)\leq Mkm p^{\alpha(u)+1}+6Kkp\le k(MCB+6K)p.
\]
Further recalling the uniform bound from \cref{prop:h}\ref{item:2} and taking $p$ small enough, this gives
\[\bbP_p(\cE^u(m,k))\le \exp\left(-h^u_p(T)k\right)
\leq \exp\left(-h^u_p\left(p^{\alpha(u)}m\right)k\right),\]
also taking into account that $h_p^u$ is non-increasing and $T\le p^{\alpha(u)}m$.
\end{proof}
With \cref{Emlu} at hand, we next prove \cref{upper bound half-plane} in a specific case.
\begin{claim}
\label{eq:ar}
For any $E$ with $J_E=0$,
\begin{equation}\bbP_p\left(\mathcal C^u(m,k,E)\right)\leq \exp\left(-(1-2\e/3)h_p^u\left(p^{\alpha(u)}m\right)k\right).\end{equation}    
\end{claim}
\begin{proof}    
Note that $k>3K(\alpha(u)+1)$. Let us assume in the following that $\E^u(m,k)$ does not occur. Then, $A'=(A\cap R^u(m,k))\cup E$ consists of $3K$-connected components of size at most $\alpha(u)$ contained entirely in $R^u(m,k)$ and $3K$-connected components of size at most $\alpha(u)-1$ contained entirely in $E$ (since $J_E=0$ and $\cE^u(m,k)$ does not occur). 

Consider a $3K$-connected component $\cK$ of $A'$. If $\cK\subseteq E$, then $|\cK|\le \alpha(u)-1$ and $d(\cK,A'\setminus\cK)>3K$. The component $\cK$ cannot be simultaneously close to $\bbH_u$ and $\bbH^{-u}(1-k)$. We claim that for any $x\in[\cK\cup(\bbZ^2\setminus S^u(k)]\cap S^u(k)$ we have $d(x,\cK)< K/3$. Indeed, depending on whether $\cK$ is close to one boundary of $S^u(k)$, the other boundary, or neither boundary, this follows from either \cref{eq:K:1} or \cref{eq:K:2}. 

Let us make the further assumption that neither $l^u(0)$ nor $l^{-u}(-k+1)$ is occupied (recall \cref{def:occupied}), using the notation $l^{-u}(i)$ in order to specify that the line must be occupied in direction $-u$. Then, the same reasoning as above applies to any $3K$-connected component $\cK\subseteq A'\cap R^u(m,n)$ as well, using \cref{eq:K:1}. But then \[\left[A'\cup(\bbZ^2\setminus S^u(k)\right]\cap S^u(k)=\bigsqcup_\cK \left([\cK\cup S^u(k)]\cap S^u(k)\right),\]
since the sets in the disjoint union are at distance at least $3K-2K/3$ apart and therefore do not interact. Thus, each $K$-connected component of $[A'\cup (\bbZ^2\setminus S^u(k))]\setminus S^u(k)$ is generated by a single $3K$-connected component of $A'$, so it cannot $K$-connect $l^u(0)$ to $l^u(k)$. In conclusion, if $\E^u(m,k)$ does not occur, $l^u(0)$ or $l^{-u}(-k+1)$ must be occupied in order for $\cC^u(m,k,E)$ to occur.
 
Depending on whether $l^u(0)$ or $l^{-u}(-k+1)$ is occupied, one can repeat the above argument inductively for the strip $S^u(k)\setminus l^u(0)$ or $S^u(k)\setminus l^{-u}(-k+1)$. This procedure continues as long as the remaining strip is sufficiently thick. Namely, we deduce that for $N=3K(\alpha(u)+1)$, if $\cC^{u}(m,k,E)\setminus\cE^u(m,k)$ occurs, then there exists $k'$ between 0 and $k$ such that $l^u(0),\dots ,l^u(k'-1)$ and $l^{-u}(-k'+N),\dots,l^{-u}(-k+1)$ are occupied in $A'$. 

Set $\P_p(\A^u(m,k))=1$ for $k<0$. By the above, since lines at a distance greater than $K$ are independently occupied, we get
\begin{align}
\nonumber\P_p\left(\mathcal C^u(m,k,E)\right)&{}\leq \P_p\left(\E^u(m,k)\right)+\sum_{k'=0}^{k} \P_p\left(\A^u(m,k')\right)\P_p\left(\A^{-u}(m,k-k'-N)\right)\\
\label{eq:CumkE}&\leq (k+2)\exp \left(-h^u_p\left(p^{\alpha(u)}m\right)(k-N)\right),\end{align} by \cref{Emlu}, symmetry and \cref{prop:h}\ref{item:1} for the second inequality. Recalling that $mp^{\alpha(u)}\le CB$, $3k=L\gg B,1/\varepsilon,C,K$ and \cref{prop:h}, we get
\[k+2\le \exp\left( h^u_p(CB) (k-N)\e/3\right)\le\exp\left(h^u_p\left(p^{\alpha(u)}m\right)(k-N)\varepsilon/3\right).\]
Combining this with \cref{eq:CumkE} and $k-N\ge (1-\e/3)k$, we deduce
\[\P_p\left(\mathcal C^u(m,k,E)\right)\leq \exp \left(-(1-2\e/3)h^u_p\left(p^{\alpha(u)}m\right)k\right).\qedhere\]
\end{proof}
We can now conclude the proof of \cref{upper bound half-plane}. Divide the rectangle $R^u(m,n)$ into $\left\lfloor n/k\right\rfloor$ translates of $R^u(m,k)$ and a remainder of height at most $k$. Then, at least $\left\lfloor n/k\right\rfloor-2J_E$ of these translated rectangles satisfy the condition of \cref{eq:ar}. We thus obtain
\begin{align*}\P_p\left(\mathcal C^u(m,n,E)\right)&\leq \left(\P_p\left(\mathcal C^u(m,k,E)\right)\right)^{\left\lfloor n/k\right\rfloor-2J_E}\\&\le \exp \left(-(1-2\e/3)h^u_p\left(p^{\alpha(u)}m\right)\left(k\lfloor n/k\rfloor-2kJ_E\right)\right)\\&\le \exp \left(-(1-\e)h^u_p\left(p^{\alpha(u)}m\right)\left(n-3kJ_E\right)\right)\end{align*}
for $n\ge 1/T$. This concludes the proof.
\end{proof}
We are now in a position to prove \cref{prop:bound:E}.
\begin{proof}[Proof of \cref{prop:bound:E}] Consider two droplets $D_1=D[\textbf{a}]\subseteq D_2=D[\textbf{b}]$ with $\Phi(D_2)\le CBp^{-\alpha}$ and $\Psi(D_1,D_2)\leq Tp^{-\alpha}$. Let $\bm$ be the dimension of $D_1$ and $\bs$ be the location of $D_1\subseteq D_2$. For each $u\in\cS_\alpha$ we define $R^u$ as in the proof of \cref{prop:filling:functional}, namely, let $R^u$ be the translate of the largest rectangle $R^u(\tilde m_u,s_u)$ such that $R^u\subseteq D[\ba+\be_u s_u]\setminus D[\ba]$ (recall \cref{fig:upper bound,eq:def:Ru,eq:def:mutilde}). Let $x_u\in\bbZ^2$ be such that $R^u=x_u+R^u(\tilde m_u,s_u)$. We set $\bar R^u=R^u$ if $\tilde m_u\ge Tp^{-\alpha}$ and $\bar R^u=\varnothing$ otherwise and let $\bar m_u=\tilde m_u$ if $\tilde m_u\ge Tp^{-\alpha}$ and $\bar m_u=0$ otherwise. Let 
\[X=D_2\setminus\Big(D_1\cup\bigcup_{u\in\cS_\alpha} \bar R^u\Big)\]
be the \emph{leftover} region (in \cref{fig:upper bound} this corresponds to the larger droplet without the smaller one and those among the shaded rectangles which are not too thin). The leftover may have a rather complicated shape, but is, crucially, small. 

Conditioning on $A\cap X$ and recalling \cref{def:spanning,eq:def:CumnE}, we get
\[\bbP_p(\cE(D_1,D_2))\le \bbE_p\Big[\prod_{u\in\cS_\alpha}\bbP_p\left((A-x_u)\in \cC^u(\bar m_u,s_u,(A\cap X)-x_u)|A\cap X\right)\Big].\]
In words, each rectangle $\bar R^u$ is crossed with boundary condition given by the infections in the leftover region. Note that this probability is simply an indicator function for $u$ such that $\tilde m_u<T p^{-\alpha}$, since it is measurable with respect to the conditioning.

Following \cref{upper bound half-plane}, for each $u\in\cS_\alpha$ let $J^u_{A\cap X}$ be the number of $j\in\{0,\dots,s_u-1\}$ such that $l_u(j)+x_u$ is at a distance at most $3K$ from a $3K$-connected set of cardinality $\alpha$ in $A\cap X$. Then, \cref{upper bound half-plane} gives 
\begin{equation}
\label{eq:ED1D2}\bbP_p(\cE(D_1,D_2))\leq\bbE_p\Big[\prod_{\substack{u\in\cS_\alpha\\\bar m_u=0}}\1_{J_{A\cap X}^u=s_u}\exp\Big(-(1-\e)\sum_{\substack{u\in\cS_\alpha\\\bar m_u\neq0}}h^u_p\left(p^\alpha\bar m_u\right)\left(s_u-L J^u_{A\cap X}\right)\Big)\Big],\end{equation}
which becomes an expectation just over the $(J^u_{A\cap X})_{u\in\cS_\alpha}$.

We argue that for each $u$ either $J^u_{A\cap X}$ is small enough not to perturb $s_u$ much or it is large, which is unlikely by itself. Indeed, denoting by $\bm^Z$ the dimension of $D_1^{Zp^{-\alpha}}$, we can bound \cref{eq:ED1D2} from above by
\[\sum_{S\subset \{u\in\cS_\alpha:\bar m_u\neq 0\}}\bbP_p\left(\forall u\in \cS_\alpha\setminus S, J^u_{A\cap X}>\e s_u/L\right)\exp\Big(-(1-\e)^2\sum_{u\in S}h^u_p\left(p^\alpha m^Z_u\right)s_u\Big),\]
noting that $m^Z_u\ge m_u\ge \bar m_u$ for all $u\in\cS_\alpha$. Thus, it only remains to prove that for any $S\subset\cS_\alpha$ such that $S\supset\{u\in\cS_\alpha:\bar m_u=0\}$, we have
\[\bbP_p\left(\forall u\in S,J^u_{A\cap X}>\e s_u/L\right)\le \exp\Big(-\sum_{u\in S}h_p^u\left(m_u^Zp^\alpha\right)s_u\Big).\]

Fix $u\in S$ such that $s_u$ is maximal. Since $u\in S$, there exist at least $\e s_u/(CKL)$ disjoint $3K$-connected sets of $\alpha$ infections in $X\setminus\bbH_u(a_u)$. But by construction $|X|\le Cs_u Tp^{-\alpha}$, so the union bound gives
\begin{align*}\bbP_p\left(J^u_{A\cap X}>\e s_u/L\right)&{}\le p^{\alpha \e s_u/(CKL)}\binom{K^C s_u Tp^{-\alpha}}{\e s_u/(CKL)}\\
&\le p^{\alpha \e s_u/(CKL)}\left(\frac{eK^C s_u Tp^{-\alpha}}{\e s_u/(CKL)}\right)^{\e s_u/(CKL)}\\
&{}\le \left(K^{2C}L T/\e\right)^{\e s_u/(CKL)}\le \exp(-Ls_u)\\
&\le \exp\Big(-\sum_{v\in \cS_\alpha\setminus V}h^v_p\left(m^Z_vp^{\alpha}\right)s_v\Big),\end{align*}
since $T$ is chosen small enough depending on $\e,C,K,Z,L$ and $h^v_p(m^Z_vp^\alpha)\le h^v_p(Z)<L/|\cS_\alpha|$, since $L$ is chosen large enough depending on $Z$.
\end{proof}

\subsection{Hierarchies}
We next introduce the notion of hierarchies we will use, following \cite{Holroyd03}, where this method was introduced. 

\begin{definition}[Hierarchy]
\label{def:hierarchy}
Let $D$ be a nonempty $\cS_\alpha$-droplet. A \emph{hierarchy} $\HH=(V_\cH,E_\cH)$ for $D$ is an oriented rooted tree with edges pointing away from the root and the following additional structure. Each vertex $v\in V_\cH$ is labelled by a non-empty $\cS_\alpha$-droplet $D_v$. Let $N(v)$ denote the out-neighbourhood of $v$. We require the following conditions to hold.
\begin{enumerate}
    \item The label of the root is $D$.
    \item For any $v\in V_\cH$, $|N(v)|\le 2$.
    \item For any $v\in V_\cH$ and $u\in N(v)$, $D_u\subseteq D_v$.
    \item If $v\in V_\cH$ and $N(v)=\{u,w\}$ with $u\neq w$, then $D_{u}\cup D_w$ is $K$-connected and $D_v=D_u\vee D_w$ (recall \cref{def:span}).
\end{enumerate}
\end{definition}
Vertices of $v\in V_\cH$ are called \emph{seeds}, \emph{normal vertices} and \emph{splitters} if $|N(v)|=0,1,2$ respectively.

\begin{definition}[Precision of  a hierarchy]
\label{def:precision}
Let $z\geq |\cS_\alpha|$ and $t>0$. A hierarchy \emph{of precision} $(t,z)$ is a hierarchy $\cH$ such that the following hold.
\begin{enumerate}
\item A vertex $v\in V_\cH$ is a seed if and only if $\Phi(D_v)\le z$.
\item If $N(u)=\{v\}$, then $\Psi(D_v,D_u) \leq t$.
\item If $v\in N(u)$ and either $u$ is a splitter or $v$ is a normal vertex, then $\Psi(D_v,D_u) >t/2$.
\end{enumerate}
\end{definition}

We now relate the concept of hierarchy to our study.
\begin{definition}[Occurrence of a hierarchy] 
\label{def:occurrence}
A hierarchy \emph{occurs} if the following disjoint occurrence event holds (recall \cref{subsec:proba:tools}):
\[\cE(\cH)=\mathop{\bigcirc}\limits_{\substack{u\in V_\cH,\\N(u)=\varnothing}}\cE(D_u)\circ \mathop{\bigcirc}\limits_{\substack{u,v\in V_\cH,\\N(u)=\{v\}}}\cE(D_v,D_u).\]
\end{definition}
The proof of the following key deterministic result is omitted, as it is identical to \cite{Bollobas23}*{Lemma 8.7}.
\begin{proposition}[Existence of a hierarchy] \label{hierarchy} Let $z\geq |\cS_\alpha|$, $t>0$ and $D$ be a non-empty $\cS_\alpha$-droplet. If $D$ is spanned, then there exists a hierarchy of precision $(t,z)$ for $D$ that occurs.
\end{proposition}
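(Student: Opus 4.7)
The plan is to build $\cH$ recursively from the root downwards, using a witness for the spanning of $D$ to locate the labels of the children. Label the root with $D$ and process vertices one at a time. At a vertex $v$ whose label $D_v$ satisfies $\Phi(D_v)\le z$, declare $v$ a seed, at which point condition (1) of \cref{def:precision} is satisfied and the seed event $\cE(D_v)$ holds by the construction below. Otherwise $\Phi(D_v)>z$, and one must produce one or two children by case analysis on how $D_v$ was spanned.

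By \cref{def:spanning}, the spannedness of $D_v$ yields a $K$-connected infected witness set $X_v\subseteq[(A\cap D_v)]$ whose minimal enclosing $\cS_\alpha$-droplet is $D_v$. Consider the finite family $\mathfrak{F}(v)$ of $\cS_\alpha$-droplets $D'\subsetneq D_v$ arising as the minimal enclosing droplet of some $K$-connected subset of $X_v$; each such $D'$ is automatically spanned with witness inside $X_v$. Split into two cases. (Split) If there exist $D_1,D_2\in\mathfrak{F}(v)$ with disjoint witness sub-sets of $X_v$, with $D_1\cup D_2$ being $K$-connected, $D_1\vee D_2=D_v$, and $\Psi(D_i,D_v)>t/2$ for both $i$, declare $v$ a splitter with children labelled $D_1,D_2$ and recurse. (Normal) Otherwise, pick a spanned droplet $D'\subsetneq D_v$ with $\Psi(D',D_v)\le t$, chosen so that, if $D'$ itself is to become a normal vertex, then also $\Psi(D',D_v)>t/2$; declare $v$ normal with child $D'$ and recurse. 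Termination is automatic: residual witness sets at splitters strictly shrink, the location $\Psi$ grows by a definite amount along each normal edge, and seeds are reached once $\Phi\le z$.

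The disjoint occurrence required by \cref{def:occurrence} is produced by always choosing the children's witness sets as disjoint sub-sets of the parent's set $X_v$; for normal transitions $v\to u$ the infections used by $\cE(D_u,D_v)$ live in $D_v\setminus D_u$, which is disjoint from everything used strictly inside $D_u$ further down the tree. Iterating these disjointness statements across the whole tree yields the composition by $\circ$ spelled out in \cref{def:occurrence}.

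The main obstacle is the Normal case, namely showing that one can always locate a $D'$ whose location $\Psi(D',D_v)$ falls in the prescribed window $(t/2,t]$ whenever $D'$ will itself be a normal vertex. The argument, identical to \cite{Bollobas23}*{Lemma 8.7} and modelled on \cite{Holroyd03}, is a monotone peeling of the spanning process: reverse the $K$-connected merges that built $D_v$ from $X_v$, at each step removing the last-added component of the witness and thereby enlarging $\Psi(\cdot,D_v)$ by a bounded integer increment determined by $K$ and the update rule diameter. Since $t$ is chosen large relative to these constants, the increment is always at most $t/2$, so the resulting chain of spanned strict sub-droplets either first encounters a balanced splitting (placing us back in the Split case) or visits the window $(t/2,t]$, from which a valid normal child is extracted. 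This closes the recursion and produces a hierarchy of precision $(t,z)$ for $D$ that occurs.
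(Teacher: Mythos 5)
The paper does not give its own proof of this proposition; it simply states that the argument is omitted as identical to \cite{Bollobas23}*{Lemma 8.7}. Your sketch reproduces the same high-level blueprint as that argument (a top-down recursion on the root, with a witness-driven Seed/Normal/Splitter trichotomy), so the approach is right in spirit. However, the sketch contains a claim inconsistent with the hypotheses and leaves the two hard parts unaddressed.

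First, you assert ``Since $t$ is chosen large relative to these constants, the increment is always at most $t/2$.'' The proposition is stated for arbitrary $t>0$, so this choice is simply not available; either the peeling must be argued for all $t$, or the statement should be restricted. Second, the disjoint-occurrence argument is too loose. The observation that $\cE(D_u,D_v)$ depends only on $A\cap(D_v\setminus D_u)$ is correct (since $A\cap D_u$ is absorbed by $D_u$ inside the event), but when a splitter's two children overlap, the descendant sub-trees can a priori use the same region of $A$. Producing pairwise disjoint witnesses for every seed event and every normal-edge event across the whole tree, as required by the $\circ$ in \cref{def:occurrence}, is an inductive bookkeeping that is precisely what makes BK applicable, and your sketch does not carry it out. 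Finally, the crux of the Normal case---showing that one can always find a spanned sub-droplet with $\Psi$ landing in $(t/2,t]$, or else fall into the Split case---you defer to \cite{Bollobas23}*{Lemma 8.7}, which is the very citation the paper gives, so the proposal does not actually supply the content the paper omits.
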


The next lemma allows us to bound the number of hierarchies in order to use the union bound on their occurrence. For the purposes of counting, we identify $\cS_\alpha$-droplets with their intersection with $\bbZ^d$.
\begin{lemma}[Number of hierarchies] \label{number of hierarchies}Fix $a>0$. Let $t>0$ and $z\ge |\cS_\alpha|$. Let $D$ be a $\cS_\alpha$-droplet such that $\Phi(D)/t\le a$. Then, there exists a constant $c(a)>0$ such that the number of hierarchies for $D$ of precision $(t,z)$ is at most $c(a) \Phi(D)^{c(a)}$.
\end{lemma}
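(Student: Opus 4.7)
The plan is to bound the number of hierarchies for $D$ of precision $(t,z)$ by first bounding the number of vertices of any such hierarchy, and then counting the possible tree structures and droplet labellings separately.

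The main step is a depth bound. Fix any root-to-leaf path $v_0,v_1,\dots,v_k$ and write $D_{v_i}=D[\ba^{(i)}]$. Telescoping gives
\[\sum_{i=0}^{k-1}\Psi(D_{v_{i+1}},D_{v_i})=\sum_{u\in\cS_\alpha}\bigl(a_u^{(0)}-a_u^{(k)}\bigr).\]
Since $D_{v_k}\subseteq D_{v_0}=D$, for each $u\in\cS_\alpha$ one has $a_u^{(0)}-a_u^{(k)}\le \sup_{d\in D}\<d,u/\rho_u\>-\inf_{d\in D}\<d,u/\rho_u\>\le \diam(D)/\rho_u\le \Phi(D)/(2\rho_u)$, using Cauchy--Schwarz and the classical fact that any chord of a convex region has length at most half its perimeter. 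Summing over $u\in\cS_\alpha$ bounds the total $\Psi$ along the path by a constant multiple of $\Phi(D)\le at$. Now classify an edge as \emph{long} when $\Psi>t/2$ and \emph{short} otherwise. By condition~3 of \cref{def:precision}, a short edge must have a normal parent and a seed-or-splitter child, so no two consecutive edges of the path can both be short (the shared vertex would have to be simultaneously a seed/splitter and a normal vertex). Consequently at least $\lfloor k/2\rfloor$ edges are long, each contributing more than $t/2$ to the path sum, which forces $k=O(a)$.

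Since the tree is binary and has depth $O(a)$, it has at most $N(a):=2^{O(a)}$ vertices, and the number of rooted plane trees on at most $N(a)$ vertices with maximum out-degree two is a finite constant $c_1(a)$. For each vertex, the droplet $D_v$ is an $\cS_\alpha$-droplet contained in $D$, determined by its $|\cS_\alpha|$ radii. Each radius ranges over an interval of length $O(\Phi(D))$ by the diameter bound above, and distinct values of $\lfloor a_u\rho_u\rfloor$ produce distinct $\bbZ^2$-intersections, so there are at most $c_2\Phi(D)^{|\cS_\alpha|}$ possible labellings per vertex for some constant $c_2$. Multiplying, the total number of hierarchies of precision $(t,z)$ for $D$ is at most
\[c_1(a)\bigl(c_2\Phi(D)^{|\cS_\alpha|}\bigr)^{N(a)}\le c(a)\Phi(D)^{c(a)}\]
for a suitably large $c(a)$.

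The main obstacle is the depth estimate, where one must combine the telescoping control of $\Psi$ along a path with the combinatorial observation that long and short edges must alternate as dictated by the precision conditions. After that, both the tree-counting and the droplet-labelling counts are polynomial in $\Phi(D)$ with exponents depending only on $a$ and on constants attached to $\cU$, which yields the claimed bound.
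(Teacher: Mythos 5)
Your proof is correct and follows essentially the same route as the paper's: bound the height of the hierarchy via the precision conditions, deduce a bounded number of tree shapes, then multiply by a polynomial bound on the number of possible droplet labels per vertex. The paper states the alternation fact tersely as "every two steps away from the root, the total location of droplets decreases by at least $t/2$", while you unpack it via the short/long edge dichotomy, which is exactly what that phrase encodes (a short edge forces a normal parent and seed-or-splitter child, so two consecutive short edges would force the shared vertex to be both). Your diameter-based bound on $\sum_{u}(a_u^{(0)}-a_u^{(k)})$ is slightly more careful than the paper's $\sum_u a_u$ (which implicitly relies on $a_u^{(k)}\ge 0$, i.e.\ on a suitable translate), but both give the needed $O(\Phi(D))$ control. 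One very small imprecision: determining the $\bbZ^2$-intersection of the half-plane $\bbH_u(a_u)$ is governed by $\lceil a_u\rceil$ rather than $\lfloor a_u\rho_u\rfloor$ (recall $\<x,u\>\in\rho_u\bbZ$, so the condition is $\<x,u\>/\rho_u<a_u$); this does not affect the $O(\Phi(D))$ count per direction.
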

\begin{proof} The definition of the hierarchy of precision $(t,z)$ implies that every two steps away from the root, the total location of droplets decreases by at least $t/2$. Therefore, the height of a hierarchy with root label $D=D[\ba]$ is at most  $4\sum_{u\in\cS_\alpha}a_u/t\le C_{0}\Phi(D)/t$ for a suitably large $C_{0}>0$. In particular, there is a bounded number of possible tree structures for $\cH$ (without the labels). Moreover, for each label the number of possibilities is at most $C_{0}\Phi(D)^{|\cS_\alpha|}$, since $C_{0}$ is large enough. Indeed, for each $u\in\cS_\alpha$ the number of $n$ such that $l_u(n)\cap D\neq\varnothing$ is at most of order $\Phi(D)$ and those are the possible choices of $a_u$ in the radii $\ba$ defining the labelling droplet $D[\ba]$.
\end{proof}

\subsection{The probability of occurrence of a hierarchy}
In order to use a union bound on hierarchies, we will need to estimate $\bbP_p(\cE(\cH))$ for a given hierarchy $\cH$. If $\cH$ involves no splitters, this is straightforward, as one can directly apply \cref{prop:bound:E}. Even though this is the dominant scenario, we will need to account for all other possibilities as well. Naturally, the main issue are hierarchies with many splitters and, therefore, many seeds. It is therefore natural to introduce the following quantity, still following \cite{Holroyd03}.
\begin{definition}[Pod of a hierarchy]
The \emph{pod} of a hierarchy $\HH$, denoted by $\Pod(\HH)$, is defined by
\[\Pod(\HH)=\sum_{\substack{u\in V_\cH,\\N(u)=\varnothing}}\Phi(D_u).\]
\end{definition}

Before dealing with an entire hierarchy, we first bound the probability of a single seed. Let us note that a more general statement can be found in \cite{Hartarsky22univlower}*{Corollary A.11}, but in the symmetric setting we are dealing with one has an easier way to achieve the following.
\begin{lemma}[Seed bound]
\label{lem:seeds}
For any $\cS_\alpha$-droplet $D$,
\[\bbP_p(\cE(D))\le \begin{cases}e^{-\Phi(D)L}&\text{if }\Phi(D)\le Z/p^\alpha,\\
e^{-\Phi(D)/L}&\text{if }\Phi(D)\le CB/p^\alpha.
\end{cases}\]
\end{lemma}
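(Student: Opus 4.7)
The plan is to prove the first (subcritical) bound directly via a combinatorial argument combined with the BK inequality, and then to deduce the second (mesoscopic) bound by iterating the first one along a hierarchy.

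For the first bound, suppose $\Phi(D)\le Z/p^\alpha$. The key step I would establish is a deterministic lemma in the spirit of Aizenman--Lebowitz: whenever $\cE(D)$ occurs, $A\cap D$ contains at least $c\Phi(D)$ pairwise disjoint subsets, each of which is a helping set for some direction in $\cS_\alpha$. The intuition is that inside the finite region $D$, without access to an infected half-plane, the closure $[A\cap D]$ can only enlarge each of its $K$-connected components by an additive $K/3$ per step (by \cref{eq:K:2}) unless the component already contains a helping set for some direction in $\cS_\alpha$; therefore, for the $K$-connected spanning set $X\subseteq[A\cap D]$ of \cref{def:spanning} to reach within $K$ of each edge $E_u(D)$, of order $m_u/K$ helping-set advances are required in direction $u\in\cS_\alpha$. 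The symmetry assumption on $\cU$ is used here to ensure that $\alpha(u)=\alpha(-u)=\alpha$, so that advances on either side of the $u$-axis genuinely cost $\alpha$ initial infections; summing over $u\in\cS_\alpha$ produces the required $c\Phi(D)$ helping sets. Applying the BK inequality \eqref{eq:BK} to these disjoint events and bounding the number of possible positions by $|D|\le C\Phi(D)^2$ then gives
\[\bbP_p(\cE(D))\le \binom{C\Phi(D)^2}{c\Phi(D)}(C_0 p^\alpha)^{c\Phi(D)}\le (C_1\Phi(D)p^\alpha)^{c\Phi(D)}\le (C_1Z)^{c\Phi(D)}\le e^{-L\Phi(D)},\]
where in the last step I use $\Phi(D)p^\alpha\le Z$ and $1/Z\gg L$.

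For the second bound, if $\Phi(D)\le Z/p^\alpha$ the previous case already gives a stronger estimate, so assume $Z/p^\alpha<\Phi(D)\le CB/p^\alpha$. I would apply \cref{hierarchy} with precision $(Tp^{-\alpha},Z/p^\alpha)$. Since $a=\Phi(D)/(Tp^{-\alpha})\le CB/T$ is bounded, \cref{number of hierarchies} shows the number of admissible hierarchies is at most $\Phi(D)^{c(CB/T)}$, a factor that can be absorbed into $e^{\e\Phi(D)/L}$. For a fixed hierarchy $\cH$, the BK disjoint-occurrence structure of \cref{def:occurrence}, combined with the subcritical bound on every seed and \cref{prop:bound:E} on every normal edge (applicable since $\Psi(D_v,D_u)\le Tp^{-\alpha}$ by \cref{def:precision} and $\Phi(D_u)\le CB/p^\alpha$), yields
\[\bbP_p(\cE(\cH))\le \exp\Big(-L\,\Pod(\cH)-(1-\e)^2\sum_{u\to v\text{ normal}}\tfrac{W_p(D_v^{Zp^{-\alpha}},D_u^{Zp^{-\alpha}})}{p^\alpha}\Big).\]
Because $p^\alpha m_u^{Zp^{-\alpha}}\in[Z,CB+Z]$ on every normal edge, \cref{prop:h}\ref{item:2} provides a uniform constant $\delta=\delta(Z,B)>0$ with $h^u_p(p^\alpha m_u^{Zp^{-\alpha}})\ge\delta$, and the second exponent is then at least $\delta\sum\Psi(D_v,D_u)$. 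To close the proof I would invoke a standard perimeter-balance fact: tracking that splitters grow the total perimeter sub-additively (thanks to the $K$-connectedness of the children's union) while normal edges grow it by at most a constant times $\Psi$, one obtains $\Pod(\cH)+C\sum_{\text{edges}}\Psi\ge (1-o(1))\Phi(D)$. Choosing $L$ large enough compared to $C/\delta$ then gives $\bbP_p(\cE(\cH))\le e^{-2\Phi(D)/L}$, and the polynomial hierarchy count is absorbed to yield $\bbP_p(\cE(D))\le e^{-\Phi(D)/L}$.

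The main obstacle is the deterministic lemma of the first case---a form of the classical Aizenman--Lebowitz lemma adapted to the general notion of helping set of difficulty $\alpha$. It is precisely there that the symmetry of $\cU$ enters essentially: without it, one could imagine spanning $D$ by cheaper asymmetric advances exploiting a difficulty mismatch between $u$ and $-u$, which would reduce the number of forced disjoint helping sets and break the exponent $-L\Phi(D)$.
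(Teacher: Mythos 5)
Your approach is genuinely different from the paper's and substantially more complicated, and several steps in it have gaps.

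The paper's proof is a short reduction: since $\cS_\alpha$ is a fixed finite set of directions, a pigeonhole on the perimeter shows that some direction $u\in\cS_\alpha$ has extent $a_u+a_{-u}\ge 2\Phi(D)/C$, so after translation $D$ sits inside a rectangle $R^u(m,n)$ with $n\ge 2\Phi(D)/C$ and $m=\max(Tp^{-\alpha},\Phi(D))$. Spanning $D$ forces the crossing event $\cC^u(m,n,\varnothing)$, so \cref{upper bound half-plane} (with empty boundary condition, hence $J_E=0$) immediately gives $\bbP_p(\cE(D))\le\exp(-h^u_p(\max(T,\Phi(D)p^\alpha))\,\Phi(D)/C)$, and both cases follow from the monotonicity of $h^u_p$ plus \cref{prop:h}\ref{item:2}--\ref{item:3}: $h_p^u(Z)\ge CL$ when $\Phi(D)p^\alpha\le Z$, and $h_p^u(CB)\ge C/L$ when $\Phi(D)p^\alpha\le CB$. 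No hierarchies, no counting of disjoint helping sets.

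Your first step rests on the deterministic claim that $\cE(D)$ forces $c\Phi(D)$ pairwise disjoint helping sets in $A\cap D$. The claim itself is morally in the right direction, but your justification (``the closure can only enlarge by an additive $K/3$ per step by \eqref{eq:K:2} unless the component contains a helping set'') does not work. \Cref{eq:K:2} only bounds the closure of sets of cardinality strictly less than $\alpha(u)$; a $K$-connected component with, say, $2\alpha$ sites that is not a helping set is not controlled by \eqref{eq:K:2}, and the closure does not ``advance by $K/3$ per step'' in any useful sense (bootstrap time steps are not $K/3$-Lipschitz). The rigorous version of such a statement is exactly the strip decomposition inside \cref{upper bound half-plane}: cut the region into slabs of height $O(1)$, on each slab either the error event $\cE^u(m,k)$ occurs (rare and independent) or an occupied line exists, and conclude that $\approx n/K$ occupied lines, hence $\approx n/K$ disjoint helping sets, are forced. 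If you want the combinatorial reformulation, that lemma, not a per-step expansion argument, is the route. You also conflate $m_u$ (edge length) with the traversal distance; the right quantity is the extent $a_u+a_{-u}$ in a single well-chosen direction, not a sum over all edges.

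Your second step---bounding the mesoscopic regime through hierarchies---is both heavier and incomplete. It needs a ``perimeter-balance fact'' ($\Pod(\cH)+C\sum\Psi\ge(1-o(1))\Phi(D)$), which is plausible from \cref{dimension} and the fact that a normal edge changes the perimeter by $O(\Psi)$, but it is not established in the paper and would need a separate careful proof (the $CK|\cS|$ cost per splitter has to be dominated by the seed perimeters). The paper avoids all of this: the same rectangle-crossing estimate already covers the whole range $\Phi(D)\le CBp^{-\alpha}$, and only the value of $h_p^u$ at the argument changes, giving the $CL$ versus $C/L$ dichotomy directly. Moreover, using the hierarchy machinery here inverts the logical order in the paper, where this very seed bound (through \cref{seed perimeter}) is an input to the later hierarchy estimates.

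Finally, note that symmetry of $\cU$ does enter the paper's proof, but not in the way you suggest: it is used so that a crossing in direction $u$ is controlled by occupied lines for both $u$ and $-u$ (see the line ``by \cref{Emlu}, symmetry and \cref{prop:h}\ref{item:1}'' inside the proof of \cref{upper bound half-plane}), not to force advances in both $u$ and $-u$ across the whole droplet.
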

\begin{proof}
Let $D=D[\ba]$ for $\ba\in\bbR^{\cS_\alpha}$. Fix $u\in\cS_\alpha$ such that $a_u-a_{-u}=\max_{w\in\cS_\alpha}(a_w-a_{-w})$. Up to translating, we may assume that $D$ is contained in the rectangle $R^u(m,n)$ with $n=a_u-a_{-u}\ge 2\Phi(D)/C$ and $m=\max(Tp^{-\alpha},\Phi(D))$. Finally, observe that the event $\cE(D)$ implies that $\cC^{u}(m,n,\varnothing)$ from \cref{eq:def:CumnE} also occurs. Then, \cref{upper bound half-plane} gives
\begin{equation}
\label{eq:hideous}\bbP_p(\cE(D))\le \exp\Big(-\min_{u\in\cS_\alpha}h^u_p\Big(\max\Big(T,\Phi(D)p^{\alpha}\Big)\Big)\Phi(D)/C\Big).\end{equation}
Since $h^u_p$ is non-increasing, $h^u_p\to h^u$ by \cref{prop:h}\ref{item:3} and $h^u\to \infty$ as $x\to 0$ by \cref{prop:h}\ref{item:2}, we have $h^u_p(\max(T,\Phi(D)p^\alpha))\ge h^u_p(Z)\ge CL$ for all $p$ small enough, all $u\in\cS_\alpha$, if $\Phi(D)\le Zp^{-\alpha}$. Similarly, if $Z/p^\alpha<\Phi(D)\le CB/p^\alpha$, we have $h^u_p(\max(T,\Phi(D)p^\alpha))\ge h^u_p(CB)\ge C/L$. Plugging these bounds into \cref{eq:hideous} concludes the proof.
\end{proof}
Applying the BK inequality \cref{eq:BK} to \cref{lem:seeds}, we immediately obtain the following.
\begin{corollary}\label{seed perimeter}
 Let $\cH$ be a hierarchy for $D$ of precision $(T/p^{\alpha},Z/p^{\alpha})$. Then
\[\P_p\Big(\mathop{\bigcirc}\limits_{\substack{u\in V_\cH,\\N(u)=\varnothing}}\cE(D_u)\Big)\leq \exp (-L\Pod(\cH)).\]
\end{corollary}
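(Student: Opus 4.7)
The plan is to apply the BK inequality \cref{eq:BK} to the disjoint occurrence appearing on the left-hand side, combined with the first case of \cref{lem:seeds}.

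First, I would note that each seed event $\cE(D_u)$ depends only on $A \cap D_u$, which is a finite set, so the BK inequality in the form given by \cref{eq:BK} is indeed applicable to the disjoint occurrence of the family $(\cE(D_u))_{u: N(u)=\varnothing}$. This yields
\[\P_p\Big(\mathop{\bigcirc}\limits_{\substack{u\in V_\cH,\\N(u)=\varnothing}}\cE(D_u)\Big)\leq \prod_{\substack{u\in V_\cH,\\N(u)=\varnothing}}\bbP_p(\cE(D_u)).\]

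Next, I would invoke the hypothesis that $\cH$ has precision $(T/p^{\alpha},Z/p^{\alpha})$. By the first item of \cref{def:precision}, every seed $u$ satisfies $\Phi(D_u) \le Z/p^{\alpha}$, so the first case of \cref{lem:seeds} applies and gives $\bbP_p(\cE(D_u)) \le \exp(-L\Phi(D_u))$.

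Combining these two bounds,
\[\P_p\Big(\mathop{\bigcirc}\limits_{\substack{u\in V_\cH,\\N(u)=\varnothing}}\cE(D_u)\Big)\leq \prod_{\substack{u\in V_\cH,\\N(u)=\varnothing}}e^{-L\Phi(D_u)} = \exp\Big(-L\sum_{\substack{u\in V_\cH,\\N(u)=\varnothing}}\Phi(D_u)\Big)=\exp(-L\Pod(\cH)),\]
by the definition of $\Pod(\cH)$. This completes the argument; there is no real obstacle here, as the statement is essentially a bookkeeping corollary that packages \cref{lem:seeds} and BK into a single convenient estimate that will later be combined with bounds on normal-vertex transitions to control the probability of a full hierarchy occurring.
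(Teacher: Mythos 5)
Your proof matches the paper's intended argument exactly: the paper states the corollary follows by "applying the BK inequality \eqref{eq:BK} to \cref{lem:seeds}," which is precisely the combination you spell out, using that every seed has $\Phi(D_u)\le Z/p^\alpha$ by the precision hypothesis so that the first case of \cref{lem:seeds} applies. Correct, and the same approach.
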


If $\Pod(\cH)\ge 2\lambda/(Lp^\alpha)$, \cref{seed perimeter} will be sufficient to conclude. In order to deal with the more relevant hierarchies with smaller pods, we will need a more precise bound. 

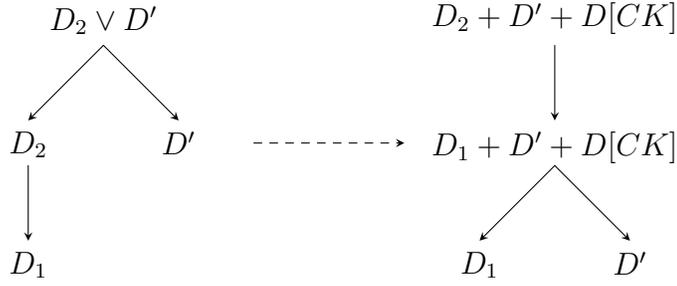
\begin{figure}
\centering
\begin{tikzpicture}[line cap=round,line join=round,>=stealth,x=1cm,y=1cm]
\draw[->] (0,0) node[above]{$D_2\vee D'$}--(-1,-1) node[below]{$D_2$};
\draw[->] (-1,-1.6)--(-1,-2.6) node[below]{$D_1$};
\draw[->] (0,0)--(1,-1) node[below]{$D'$};
\draw[->,dashed] (2,-1.3)--(4,-1.3);
\begin{scope}[shift={(6,0)}]
\draw[->] (0,0) node[above]{$D_2+ D'+ D[CK]$}--(0,-1) node[below]{$D_1+ D'+ D[CK]$};
\draw[->] (0,-1.6)--(-1,-2.6) node[below]{$D_1$};
\draw[->] (0,-1.6)--(1,-2.6) node[below]{$D'$};
\end{scope}
\end{tikzpicture}
\caption{The operation on hierarchies provided by \cref{chain property,dimension}. The seeds are identical, the work functional for the normal vertex on the left is larger than the one on the right, while the root is labelled by a larger droplet on the right, so the second `hierarchy' is more efficient. However, since $D_1$ and $D'$ have no reason to be $K$-connected, the result on the right is no longer a proper hierarchy.}
\label{fig:tree:transformation}
\end{figure}

The goal of the next two lemmas is, roughly speaking, to transform a hierarchy with a splitter root into one with a normal root, as depicted in \cref{fig:tree:transformation}. The first lemma is essentially \cite{Bollobas23}*{Eq. (16)}, so we omit the proof.
\begin{lemma}[Sub-additivity of the span]\label{dimension}
Assume $D_1,D_2,D$ are $\cS_\alpha$-droplets such that $D_1\cup D_2$ is $K$-connected. Then some translate of $D_1+ D_2+ D[CK]$ contains $D_1\vee D_2$.
\end{lemma}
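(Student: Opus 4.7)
The plan is to exploit the explicit correspondence between $\cS_\alpha$-droplets and their radii vectors, and to reduce the set-theoretic containment claim to a componentwise inequality on radii that can be verified with an explicit choice of translation.

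First, I would use the $K$-connectivity assumption to pick $x_1\in D_1$ and $x_2\in D_2$ with $\|x_1-x_2\|\le K$. Writing $D_i=D[\ba^{(i)}]$ and $D[CK]=D[\bc]$, by the definition of Minkowski sum and translation of droplets, $(D_1+D_2+D[CK])+t$ is an $\cS_\alpha$-droplet with radii $a_u^{(1)}+a_u^{(2)}+c_u+\<t,u\>/\rho_u$. Since $D_1\vee D_2=D[\ba^{(1)}\vee\ba^{(2)}]$ has radii $\max(a_u^{(1)},a_u^{(2)})$, and since containment between two $\cS_\alpha$-droplets is equivalent to componentwise comparison of their radii, the desired containment reduces to
\[\<t,u\>/\rho_u+c_u\ge -\min\!\left(a_u^{(1)},a_u^{(2)}\right)\quad\text{for every }u\in\cS_\alpha.\]

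Second, I would verify this inequality for the explicit choice $t=-x_1$. The definitional inequalities $\<x_i,u\>/\rho_u\le a_u^{(i)}$ combined with $|\<x_1-x_2,u\>|\le\|x_1-x_2\|\le K$ yield $\min(a_u^{(1)},a_u^{(2)})\ge (\<x_1,u\>-K)/\rho_u$. Substituting $t=-x_1$ then reduces the required inequality to $c_u\ge K/\rho_u$ uniformly in $u\in\cS_\alpha$. Since $D[CK]$ is the symmetric $\cS_\alpha$-droplet of dimension $CK$ in each direction, each radius $c_u$ is linear in $C$, so choosing $C$ large enough (depending only on $\cS_\alpha$ and the fixed $\rho_u$) suffices.

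The main obstacle is essentially the packaging of the elementary fact $\max(A+a,B+b)\ge \max(A,B)+\min(a,b)$ that is implicit in the second step: this inequality is what forces the specific single translation $t=-x_1$ (as opposed to, say, the midpoint) to work simultaneously across every $u\in\cS_\alpha$, with the residual $K/\rho_u$ discrepancy absorbed uniformly into $D[CK]$. Beyond this, the argument is pure bookkeeping in the radii parametrisation.
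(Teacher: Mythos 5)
Your proof is correct. The paper in fact omits the proof of this lemma, deferring to \cite{Bollobas23}*{Eq. (16)}, so there is no paper proof to compare against line by line; but your argument is a clean, self-contained verification and does exactly what one would want. The reduction to radii is legitimate: $(D[\ba^{(1)}]+D[\ba^{(2)}]+D[\bc])+t$ has radii $a_u^{(1)}+a_u^{(2)}+c_u+\langle t,u\rangle/\rho_u$ (by $D[\ba]+D[\bb]=D[\ba+\bb]$ together with the shift formula for radii), the span has radii $\max(a_u^{(1)},a_u^{(2)})$, and componentwise domination of radii implies containment (you only need this one direction of the ``equivalence''). With $t=-x_1$ the required inequality becomes $\min(a_u^{(1)},a_u^{(2)})+c_u-\langle x_1,u\rangle/\rho_u\ge 0$, and your bound $\min(a_u^{(1)},a_u^{(2)})\ge(\langle x_1,u\rangle-K)/\rho_u$ reduces this to $c_u\ge K/\rho_u$; since $\cS_\alpha$ is a finite set of rational directions, $\rho_u$ is bounded below and the radii of $D[CK]$ scale linearly in $C$, so a large enough $C$ (depending only on $\cU$) suffices. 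The one cosmetic quibble is your closing remark: the arithmetic fact actually doing the work is the identity $a+b-\max(a,b)=\min(a,b)$ rather than the inequality $\max(A+a,B+b)\ge\max(A,B)+\min(a,b)$, but this does not affect the correctness of the argument.
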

\begin{lemma}\label{chain property}Let $D_1\subseteq D_2$ and $D'$ be three $\cS_\alpha$-droplets. We have 
\[W_p(D_1,D_2)\geq W_p(D_1 + D',D_2 + D').\]
\end{lemma}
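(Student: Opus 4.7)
The plan is to unfold the definitions and use the monotonicity of $h^u_p$ term by term. Let me write down what needs to be compared. Write $D_1 = D[\ba]$, $D_2 = D[\bb]$, $D' = D[\ba']$, with $\ba \le \bb$ componentwise because $D_1 \subseteq D_2$. Let $\bm$ be the dimension of $D_1$, $\bm'$ be the dimension of $D'$, and $\bs = \bb - \ba$ the location of $D_1 \subseteq D_2$.

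By \cref{eq:def:sum}, the Minkowski sums satisfy $D_1 + D' = D[\ba + \ba']$ and $D_2 + D' = D[\bb + \ba']$. Two observations come for free from this identity:
\begin{itemize}
\item By \cref{loc}, the location of $D_1 + D' \subseteq D_2 + D'$ equals $(\bb + \ba') - (\ba + \ba') = \bs$, identical to the location of $D_1 \subseteq D_2$. In particular the coefficients $s_u$ in the sum defining $W_p$ are unchanged.
\item The dimension of the Minkowski sum is additive (this is the same property used to justify that dimensions add in \cref{subsec:droplets}), so the dimension of $D_1 + D'$ is $\bm + \bm'$, which is componentwise at least $\bm$ since $\bm' \in [0,\infty)^{\cS_\alpha}$.
\end{itemize}

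Recalling \cref{def:W}, I then have
\[W_p(D_1, D_2) = p^\alpha \sum_{u \in \cS_\alpha} h^u_p(p^\alpha m_u)\, s_u,\qquad W_p(D_1 + D', D_2 + D') = p^\alpha \sum_{u \in \cS_\alpha} h^u_p\bigl(p^\alpha (m_u + m'_u)\bigr)\, s_u.\]
Since $h^u_p$ is non-increasing by \cref{prop:h}\ref{item:1} and $m_u + m'_u \ge m_u$, each summand on the right is at most the corresponding summand on the left; using $s_u \ge 0$ (a consequence of $D_1 \subseteq D_2$) the inequality is preserved and the claim follows. The only mild point to be careful about is the degenerate convention from Footnote~\ref{foot:degenerate}: if $m_u = 0$ but $m_u + m'_u > 0$, then $h^u_p(0) = \infty$ is compared with a finite value, but in that case $s_u = 0$ as well (otherwise $s_u = 0$ and both summands vanish by convention), and the inequality $\infty \cdot 0 \ge h^u_p(p^\alpha m'_u) \cdot 0$ still holds. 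So no real obstacle appears and the whole lemma reduces to pointwise monotonicity of $h^u_p$ combined with the translation-invariance of location under Minkowski addition.
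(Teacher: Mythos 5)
Your proof is correct and takes exactly the same route as the paper's one-line argument: preserve the location $\bs$ via \cref{loc}, then use that $h^u_p$ is non-increasing (the paper's proof contains a typo, writing ``non-decreasing'') against the componentwise-larger dimension of $D_1+D'$. One small blemish: your degenerate-case parenthetical is internally contradictory as written; the clean statement is simply that either $s_u=0$, in which case both summands are $0$ by the footnote convention, or $s_u>0$, in which case the left summand equals $+\infty$ and dominates automatically.
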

\begin{proof}
This follows from the fact that $h^u_p$ is non-decreasing and \cref{loc}.
\end{proof}
\Cref{chain property} is the main reason why the infection forms droplets. It is always more efficient for the infections to appear near existing infected droplets. Hence, the dynamics has a tendency to create large droplets.

As a result of the operation from \cref{fig:tree:transformation} and \cref{prop:bound:E}, we obtain the following bound.
\begin{proposition}
\label{pod of H} 
Let $D$ be a $\cS_\alpha$-droplet with $\Phi(D)\le CBp^{-\alpha}$. For any hierarchy $\HH$ of precision $(Tp^{-\alpha},Zp^{-\alpha})$ for $D$ with $N-1$ normal vertices and $S$ splitters, there exists a non-decreasing sequence of $\cS_\alpha$-droplets $D_1\subseteq \cdots\subseteq D_N$ satisfying
\begin{itemize}
\item $\Phi(D_1)\leq B S+\Pod(\HH)$,
\item either $Bp^{-\alpha}\le \Phi(D_N)\le  CBp^{-\alpha}$, or both $\Phi(D_N)< Bp^{-\alpha}$ and $D_N\supseteq D$,
\item $\bbP_p(\cE(\cH))\le C^{N}\exp(-(1-\e)^2\sum_{i=1}^{N-1}W_p(D_i^{Zp^{-\alpha}},D_{i+1}^{Zp^{-\alpha}})/p^\alpha)$.\end{itemize}
\end{proposition}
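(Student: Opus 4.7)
The plan is to proceed by induction on $|V_\cH|$, constructing the sequence $D_1\subseteq\dots\subseteq D_N$ and verifying the three bullets simultaneously. The base case, where $\cH$ consists of a single seed with $D_r=D$, is immediate: set $N=1$ and $D_1=D$, so $\Phi(D_1)=\Pod(\cH)$, $D_N=D\supseteq D$, and the third bullet's sum is empty. In both inductive cases we decompose $\bbP_p(\cE(\cH))$ via the BK inequality \eqref{eq:BK} into factors over subtrees (plus a normal-edge factor in Case A), with seed factors trivially bounded by $1$ (the seed contribution is recovered separately by \cref{seed perimeter} when pod of $\cH$ is later applied).

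When the root $r$ is a normal vertex with unique child $v$, apply the induction to $\cH_v$ (having $N-1$ normal vertices) to obtain a sequence $(D'_i)_{i=1}^{N-1}$ satisfying the three bullets with respect to $D_v$ and $S$ splitters. Extend by setting $D_i=D'_i$ for $i\le N-1$ and $D_N=D'_{N-1}\vee D_r$, translated to preserve monotonicity. The first bullet transfers unchanged. The second bullet holds via the standard case split on whether $\Phi(D'_{N-1})$ already exceeds $Bp^{-\alpha}$ (in which case we pad $D_N=D'_{N-1}$ to stay in the first option) or not (in which case $D_N\supseteq D_r$ places us in the second option). For the third bullet, BK yields an extra factor $\bbP_p(\cE(D_v,D_r))\le C\exp(-(1-\e)^2 W_p(D_v^{Zp^{-\alpha}},D_r^{Zp^{-\alpha}})/p^\alpha)$ via \cref{prop:bound:E}, which covers the new step in the sequence after applying a variant of \cref{chain property}.

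When $r$ is a splitter with children $u,w$, apply induction to both subtrees to produce $(D'_i)_{i=1}^{N_u}$ with $S_u$ splitters and $(D''_j)_{j=1}^{N_w}$ with $S_w$ splitters, where $N_u+N_w-1=N$ and $S_u+S_w+1=S$. Interleave them via Minkowski absorption: $D_i=D'_i+D''_1+D[CK]$ for $1\le i\le N_u$ and $D_{N_u+j}=D'_{N_u}+D''_{j+1}+D[CK]$ for $1\le j\le N_w-1$. Monotonicity is automatic; the location invariant $\Psi(D_i,D_{i+1})\le Tp^{-\alpha}$ follows from \cref{loc}, since consecutive terms differ in only one of the two summands. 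For the first bullet, additivity of $\Phi$ on Minkowski sums gives $\Phi(D_1)\le \Phi(D'_1)+\Phi(D''_1)+CK|\cS_\alpha|\le B(S_u+S_w)+\Pod(\cH)+CK|\cS_\alpha|\le BS+\Pod(\cH)$, using $CK\ll B$ and $\Pod(\cH_u)+\Pod(\cH_w)=\Pod(\cH)$. For the second bullet, when both inductions terminate in the covering option, \cref{dimension} gives $D_N=D'_{N_u}+D''_{N_w}+D[CK]\supseteq D_u+D_w+D[CK]\supseteq D_u\vee D_w=D_r$ up to translation; in other sub-cases we truncate at the first index $i^*$ with $\Phi(D_{i^*})\ge Bp^{-\alpha}$ and pad the tail by duplication, with the single-step perimeter increase $\Phi(D_{i+1})-\Phi(D_i)=O(Tp^{-\alpha})\ll Bp^{-\alpha}$ ensuring $\Phi(D_N)\le CBp^{-\alpha}$. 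The third bullet follows from BK $\bbP_p(\cE(\cH))\le \bbP_p(\cE(\cH_u))\cdot\bbP_p(\cE(\cH_w))$, the two inductive bounds, and \cref{chain property}: each $W_p$ term in the new sequence is at most the corresponding $W_p$ from the parent sequences (absorbed Minkowski companions are free), and the product of constants $C^{N_u}\cdot C^{N_w}$ is absorbed into $C^N$ at the cost of one extra factor per splitter, which we fold into the choice of $C$.

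The main obstacle is controlling $\Phi(D_N)$: the Minkowski-sum mechanism in Case B can push the virtual droplet beyond $D_r$, and the span operation in Case A can inflate the perimeter. The key tools for keeping everything within the admissible range are \cref{chain property}---which makes absorption free at the level of $W_p$---combined with the precision bound $\Psi(D_i,D_{i+1})\le Tp^{-\alpha}$, which limits single-step perimeter growth by $O(Tp^{-\alpha})\ll Bp^{-\alpha}$ and therefore enables clean truncation at the first index where $\Phi(D_i)$ exceeds $Bp^{-\alpha}$, with padding by duplication contributing zero to the $W_p$ sum and preserving all bounds.
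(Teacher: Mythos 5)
Your overall induction structure (base seed, normal-vertex root, splitter root) matches the paper's, and your splitter case, built on Minkowski absorption together with \cref{dimension,chain property}, is essentially the argument in the paper. However, there is a genuine gap in your normal-vertex case.

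You estimate the extra BK factor by applying \cref{prop:bound:E} to the pair $(D_v, D_r)$, producing a bound involving $W_p(D_v^{Zp^{-\alpha}}, D_r^{Zp^{-\alpha}})$, and then invoke ``a variant of \cref{chain property}'' to convert this into the required $W_p(D_{N-1}^{Zp^{-\alpha}}, D_N^{Zp^{-\alpha}})$. No such variant is available. \Cref{chain property} is a statement about Minkowski sums $D\mapsto D+D'$, under which both locations are preserved (\cref{loc}) \emph{and} dimensions $m_u$ only increase, so $h_p^u(\cdot)$ only decreases. Your step instead compares $(D_v, D_r)$ with $(D_{N-1}, D_N)$, where $D_{N-1}\supseteq D_v$ via arbitrary inclusion and $D_N=D_{N-1}\vee D_r$. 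While the locations do shrink coordinatewise (since $\max(0, b_u-c_u)\le b_u-a_u$ whenever $c_u\ge a_u$), the dimensions $m_u$ are \emph{not} monotone under droplet inclusion: growing the radius $a_u$ itself shortens the $u$-edge, so a larger droplet can perfectly well have a smaller $m_u$ in some direction with $s_u>0$, making $h_p^u(p^\alpha m_u^{D_{N-1}})$ larger than $h_p^u(p^\alpha m_u^{D_v})$. Consequently the inequality $W_p(D_v^{Zp^{-\alpha}}, D_r^{Zp^{-\alpha}})\ge W_p(D_{N-1}^{Zp^{-\alpha}}, D_N^{Zp^{-\alpha}})$ is not justified and cannot be deduced from \cref{chain property}.

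The paper avoids this comparison entirely. Since $D_N=D_r\vee D_{N-1}$ and $D_{N-1}\supseteq D_v$, one has the event-level inclusion $\cE(D_v,D_r)\subseteq\cE(D_{N-1},D_N)$, so one bounds $\bbP_p(\cE(D_{N-1},D_N))$ directly using \cref{prop:bound:E} applied to the pair $(D_{N-1},D_N)$. This produces exactly the term $W_p(D_{N-1}^{Zp^{-\alpha}}, D_N^{Zp^{-\alpha}})$ that the third bullet demands, with no need to compare $W_p$ values across a span operation. Of course one then has to check the hypotheses of \cref{prop:bound:E} for this new pair, namely $\Psi(D_{N-1},D_N)\le Tp^{-\alpha}$ (which follows from the radii computation $\sum_u\max(0,b_u-c_u)\le\sum_u(b_u-a_u)=\Psi(D_v,D_r)$ and the precision hypothesis) and $\Phi(D_N)\le CBp^{-\alpha}$ (which follows from $\Phi(D_{N-1})<Bp^{-\alpha}$ and the location bound). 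You should replace your ``variant of chain property'' step with this argument.
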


\begin{proof}
We proceed by induction on hierarchies. Let $D_r$ be the label of the root of $\HH$.

\noindent\textbf{Case 1.} Assume the root $r$ is a seed. Then $\HH$ is a singleton, $N=1$ and it is sufficient to set $D_1=D_r$.

\noindent\textbf{Case 2.} Assume the root $r$ is a normal vertex. Let $N(r)=\{u\}$. The induction hypothesis for the hierarchy with $r$ removed yields a sequence $D_1\subseteq\dots\subseteq D_{N-1}$ of $\cS_\alpha$-droplets. If $Bp^{-\alpha}\le \Phi(D_{N-1})\le  CBp^{-\alpha}$, we set $D_N=D_{N-1}$ and we are done. 

Assume that, on the contrary, $D_{N-1}\supseteq D_u$ and $\Phi(D_{N-1})< Bp^{-\alpha}$. In this case we set $D_N=D_r\vee D_{N-1}$. The resulting sequence clearly satisfies the first condition. Since $r$ is a normal vertex, by \cref{def:precision} we have $\Psi(D_u,D_r)\le Tp^{-\alpha}$, so $D_N\subseteq D_{N-1}+D[CTp^{-\alpha}]$. We further claim that $\Psi(D_{N-1},D_N)\le \Psi(D_u,D_r)\le Tp^{-\alpha}$. To see this, let $\ba,\bb,\bc,\bd\in\bbR^{\cS_\alpha}$ denote the radii of $D_u,D_r,D_{N-1},D_N$ respectively, so that $\bc\ge \ba$ and $\bd=\bb\vee\bc$. Then indeed 
\[\Psi(D_{N-1},D_N)=\sum_{v\in\cS_\alpha}(d_v-c_v)=\sum_{v\in\cS_\alpha}(\max(0,b_v-c_v))\le \sum_{v\in\cS_\alpha}(\max(0,b_v-a_v))=\Psi(D_u,D_r).\]
Therefore, $\Phi(D_N)\le CBp^{-\alpha}$ and $D_N\supseteq D_{r}$ by \cref{def:span}, so the second condition is also satisfied. Note that $D_N=D_r\vee D_{N-1}$ and $D_{N-1}\supseteq D_u$, so $\cE(D_u,D_r)\subseteq\cE(D_{N-1},D_N)$. Thus, applying \cref{prop:bound:E}, we get
\[\P_p(\E(D_u,D_r))\leq \P_p(\E(D_{N-1},D_N))\leq C\exp\left(-(1-\e)^2W_p\left(D_{N-1}^{Zp^{-\alpha}},D_{N}^{Zp^{-\alpha}}\right)/p^\alpha\right),\]
since $\Psi(D_{N-1},D_N)\le Tp^{-\alpha}$ and $\Phi(D_N)\le CB/p^{-\alpha}$. Combining this with \cref{def:occurrence}, the BK inequality \cref{eq:BK} and the induction hypothesis, we obtain that the third condition of \cref{pod of H} is also fulfilled.

\noindent\textbf{Case 3.} Assume the root $r$ is a splitter. Denote $N(r)=\{u,v\}$ and let $D^u_1,\dots,D^u_{N^u}$ and $D^v_1,\dots,D^v_{N^v}$ be the sequences yielded by the induction hypothesis for the sub-hierarchies $\cH^u,\cH^v$ with roots $u$ and $v$ respectively. Without loss of generality, assume $\Phi(D_{N^u}^u)\ge \Phi(D_{N^v}^v)$. If $\Phi(D^u_{N^u})\ge Bp^{-\alpha}$, then the sequence \[D_i=\begin{cases}D_i^u&i\in\{1,\dots,N^u\},\\
D_{N^u}^u&i\in\{N^u+1,\dots, N^u+N^v-1\}\end{cases}\]
clearly satisfies the desired properties. 

Assume that, on the contrary, $\Phi(D_{N^v}^v)\le \Phi(D^u_{N^u})< Bp^{-\alpha}$. In this case, we seek to implement the transformation of \cref{fig:tree:transformation}. Set
\[D_i=\begin{cases}D[CK]+D^u_1+ D^v_i&i\in\{1,\dots,N^v\},\\
D[CK]+D^u_{i-N^v+1}+ D^v_{N^v}&i\in\{N^v+1,\dots,N^v+N^u-1\}.
\end{cases}\]
Since the perimeter is additive, we have
\[\Phi(D_1)=\Phi(D_1^u)+\Phi(D_1^v)+\Phi(D[CK]),\]so the first condition is met, using the induction hypothesis. We have $D_{N^u+N^v-1}\supseteq D_r$ by \cref{dimension} up to translating the sequence $(D_i)_{i=1}^{N^u+N^v-1}$ appropriately. Moreover, 
\[\Phi(D_{N^u+N^v-1})=\Phi(D_{N^u}^u)+\Phi(D_{N^v}^v)+CK|\cS_\alpha|\le 2Bp^{-\alpha}+B<CBp^{-\alpha},\]
so the second condition is also verified. Finally, the BK inequality and the induction hypothesis give \begin{multline*}\bbP_p(\cE(\cH))\le \bbP_p(\cE(\cH^u))\bbP_p(\cE(\cH^v))\le C^{N^u+N^v}\exp\Bigg(-\frac{(1-\e)^2}{p^\alpha}\\\times\left(\sum_{i=1}^{N^u-1}W_p\left((D_i^u)^{Zp^{-\alpha}},(D_{i+1}^u)^{Zp^{-\alpha}}\right)+\sum_{i=1}^{N^v-1}W_p\left((D_i^v)^{Zp^{-\alpha}},(D_{i+1}^v)^{Zp^{-\alpha}}\right)\right)\Bigg),\end{multline*}
which is enough to conclude, using \cref{chain property}.
\end{proof}

\subsection{Truncating \texorpdfstring{$\lambda_\alpha$}{lambda alpha}}
In order to relate the bound from \cref{pod of H} to the constant $\lambda_\alpha$ from \cref{def:W}, we will need to truncate our bi-infinite sequences of droplets. We start by showing that it is always cheap to extend sequences to $+\infty$.
\begin{lemma}[Extension at $+\infty$]
\label{lem:extention}
For any $\cS_\alpha$-droplet $D$ with $\Phi(D)\ge G$, there exists a sequence of $\cS_\alpha$-droplets $D=D_0\subseteq D_1\subseteq\dots$ such that $\bigcup_{i\ge 0}D_i=\bbR^2$ and $\sum_{i=0}^{\infty}W(D_i,D_{i+1})\le \varepsilon$.
\end{lemma}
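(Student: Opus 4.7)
The approach I would take is to imitate the proof of $\lambda<\infty$ in \cref{base lambda}, using an exponentially expanding sequence of droplets obtained via Minkowski sums. Set $D_0=D$ and $D_i=D+D[G\cdot 2^{i-1}]$ for $i\ge 1$, noting that $D+D[k]=D[\ba+k\bd]$ where $\ba$ is the radius vector of $D$ and $\bd$ is the radius vector of $D[1]$. This yields a nondecreasing sequence with $\bigcup_{i\ge 0}D_i=\bbR^2$. By the additivity of edge lengths under Minkowski sums of convex polygons, the dimension of $D_i$ in every direction $u\in\cS_\alpha$ equals $m_u^D+G\cdot 2^{i-1}$, and the location $\bs^{(i)}$ of $D_i\subseteq D_{i+1}$ equals $G\cdot 2^{i-1}\bd$.

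For the tail $i\ge 1$, I would apply \cref{prop:h}\ref{item:2}: since $h^u(x)\le -\log(1-e^{-cx})$, which behaves like $e^{-cx}$ for $x$ large,
\[W(D_i,D_{i+1})=\sum_{u\in\cS_\alpha}h^u\bigl(m_u^D+G\cdot 2^{i-1}\bigr)\cdot G\cdot 2^{i-1}d_u\le C\cdot G\cdot 2^{i-1}\exp\bigl(-cG\cdot 2^{i-1}\bigr).\]
Summing the resulting super-geometrically convergent series gives $\sum_{i\ge 1}W(D_i,D_{i+1})\le CGe^{-cG/2}$, which is at most $\varepsilon/2$ because $G\gg 1/\varepsilon$ in the constant hierarchy from \cref{subsec:W}.

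The principal obstacle is the initial step $W(D_0,D_1)=\sum_{u\in\cS_\alpha}h^u(m_u^D)\cdot Gd_u$, since $h^u(m_u^D)$ is well controlled only when $m_u^D$ is large, whereas the hypothesis $\Phi(D)\ge G$ only guarantees that $\sum_u m_u^D$ is large, leaving open the possibility that some $m_u^D$ is small or even zero. To deal with this, I would refine the first step by choosing to grow $D$ only in those directions where $m_u^D$ is already of order $\Phi(D)$, keeping $s_u=0$ in the degenerate directions; by the geometry of convex polygons, such growth automatically creates positive $u$-edges in the remaining directions, so the next step in the doubling scheme can proceed safely. With this adjustment, the cost of the first step is again exponentially small in $G$, and the combined bound $W(D_0,D_1)\le\varepsilon/2$ plus the tail estimate yields the desired inequality $\sum_{i\ge 0}W(D_i,D_{i+1})\le\varepsilon$.
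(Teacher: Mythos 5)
Your overall strategy matches the paper's in outline: grow exponentially to infinity (the paper uses $D[2^i]$ for $i\ge k$, you use Minkowski sums $D+D[G2^{i-1}]$; the tail computations are the same as in \cref{eq:powers}), and identify the initial step as the delicate one because $D$ may have small or degenerate edges. You are right that the naive first step fails: if $m_u^D=0$ while $s_u=Gd_u>0$, the footnote convention gives $W(D_0,D_1)=\infty$, and even $m_u^D$ small but positive gives a cost of order $G\log(1/m_u^D)$, which is not bounded by $\varepsilon$.

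The gap is in your proposed fix. You write that growing $D$ only in directions where $m_u^D$ is of order $\Phi(D)$ ``automatically creates positive $u$-edges in the remaining directions, so the next step in the doubling scheme can proceed safely.'' This is not enough, for two reasons. First, \emph{positive} is the wrong requirement: by \cref{prop:h}\ref{item:2}, $h^u(x)\to\infty$ as $x\to 0$, so you need the newly created edges to be \emph{large} (of order $G$ or $\Phi(D)$) before you incur cost in that direction, not merely nonzero. Second, a single pass of growing the large-edge directions does not produce this. Enlarging $a_u$ only affects the two edges adjacent to $E_u$; if the degenerate directions come in a run of two or more consecutive members of $\cS_\alpha$, the interior ones stay small (or zero) after one pass. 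You would need to iterate, growing some directions, waiting for others to catch up, and so on, while controlling the accumulated cost — and this is precisely the part that is not argued.

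The paper's proof supplies exactly this missing piece. It targets the concrete intermediate droplet $D[2^k]$, where $k$ is chosen so that $D\subseteq D[2^k]$ but $D$ does not fit in any translate of $D[2^{k-1}]$ (hence $k\gtrsim\log_2 G$, using $\Phi(D)\ge G$ and monotonicity of perimeter under inclusion), and then interpolates from $D$ to $D[2^k]$ one direction at a time, incrementing by at most $T$, \emph{always growing the direction of maximal dimension among those not yet at target}. The geometric claim that this maximal dimension is always $\gtrsim 2^k/C$ — which rests on the non-containment of $D$ in $D[2^{k-1}]$ — is the quantitative replacement for your qualitative ``creates positive $u$-edges.'' Without some substitute for that argument, the initial-step bound does not close.
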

\begin{proof}
After translating, we may assume that for some sufficiently large $k$ depending on $\varepsilon$ we have that $D\subseteq D[2^k]$, but $D$ is not contained in any translate of $D[2^{k-1}]$. As we saw in \cref{eq:powers}, taking $k$ large we can ensure that $\sum_{i\ge k}W(D[2^i],D[2^{i+1}])\le \varepsilon/2$. Therefore it suffices to find $D=D_0\subseteq\dots\subseteq D_N=D[2^k]$ such that
$\sum_{i=0}^{N-1}W(D_i,D_{i+1})\le \varepsilon/2$.

In order to achieve this, we proceed similarly to the proof of \cref{lem:sequence}. Set $D=D[\ba^{(0)}]$ and $D[2^k]=D[\ba^{(\infty)}]$. We define $\ba^{(i)}$ by induction as follows, set $D_i=D[\ba^{(i)}]$ and denote by $\bm^{(i)}$ the dimension of $D_i$. Further let $u_i\in\cS_\alpha$ be such that $m^{(i)}_{u_i}=\max\{m^{(i)}_{u}:u\in\cS_{\alpha},a^{(i)}_{u}\neq a^{(\infty)}_u\}$. As long as $D_i\neq D[2^k]$ (at which point the construction is done), we set 
\[\ba^{(i+1)}= \ba^{(i)}+\be_{u_i}\min\left(T,a^{(\infty)}_{u_i}-a^{(i)}_{u_i}\right).\]
This procedure clearly yields $D_N=D[2^k]$ for some finite $N$. Further observe that $m^{(i)}_{u_i}\ge 2^k/C$ for all $i\in\{0,\dots,N-1\}$ and $C>0$ large enough. That is, the largest edge that has not yet reached its final position is always big. Indeed, every two edges of $D_{i}$ that have reached the final value for their radius are necessarily far apart, so there has to be a large side between them. Using this property, we have that 
\[\sum_{i=0}^{N-1}W(D_i,D_{i+1})\le \sum_{u\in\cS_\alpha}h^u\left(2^k/C\right)a^{(\infty)}_{u}\le \varepsilon/2\]
for $k$ large enough, using \cref{prop:h}\ref{item:2}.
\end{proof}
Unfortunately, the analogous statement for extending sequences to $-\infty$ is not true, since arbitrarily small droplets have a divergent cost to produce if they are too elongated. Nevertheless, we are able to obtain the following.

\begin{lemma}[Truncating $\lambda_\alpha$]
\label{lem:truncation}
Let $D_1\subseteq \dots \subseteq D_N$ be a sequence of $\cS_\alpha$-droplets such that $\Phi(D_N)\ge Gp^{-\alpha}$ and $\Phi(D_1)\le 1/(Gp^\alpha)$. Then 
\[\sum_{i=1}^{N-1}W(D_i,D_{i+1})\ge 2\lambda_\alpha-2\varepsilon.\]
\end{lemma}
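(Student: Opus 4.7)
My plan is to embed the finite chain $(D_i)_{i=1}^{N}$ into a bi-infinite sequence $\tilde{\cD}=(\tilde D_n)_{n\in\bbZ}\in\mathfrak D_\alpha$ in such a way that the work added at each end is at most $\varepsilon$. Once such an extension is in hand, the definition of $\lambda_\alpha$ gives
\[\sum_{n\in\bbZ}W(\tilde D_n,\tilde D_{n+1})=2\cW(\tilde{\cD})\ge 2\lambda_\alpha,\]
and subtracting the two tails from the total yields $\sum_{i=1}^{N-1}W(D_i,D_{i+1})\ge 2\lambda_\alpha-2\varepsilon$, as desired.

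The extension at the upper end is immediate: since $\Phi(D_N)\ge Gp^{-\alpha}\ge G$, the hypothesis of \cref{lem:extention} is satisfied. That lemma applied to $D_N$ supplies a nondecreasing chain $D_N=\tilde D_N\subseteq\tilde D_{N+1}\subseteq\cdots$ with $\bigcup\tilde D_n=\bbR^2$ and total work at most $\varepsilon$.

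The extension at the lower end is analogous in spirit but more delicate. I would build a nonincreasing chain $D_1=\tilde D_1\supseteq\tilde D_0\supseteq\tilde D_{-1}\supseteq\cdots$ whose intersection is $\{0\}$ and whose total work is at most $\varepsilon$, essentially establishing a dual version of \cref{lem:extention}. My proposal is to first translate $D_1$ so that it contains the origin (which does not affect any of the quantities appearing in $W$) and then take $\tilde D_{-k}=2^{-k}D_1$. Writing $\bm,\ba$ for the dimension and radii of $D_1$, the added work is
\[\sum_{k\ge 0}\sum_{u\in\cS_\alpha}h^u\bigl(2^{-(k+1)}m_u\bigr)\,2^{-(k+1)}a_u,\]
which, by the estimate $h^u(x)\le -\log(1-e^{-cx})$ from \cref{prop:h}\ref{item:2} followed by summation of geometric series, is bounded by a universal multiple of $\sum_{u}a_u\bigl(1+|\log m_u|\bigr)$. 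Combining this with the quantitative smallness coming from $\Phi(D_1)\le 1/(Gp^\alpha)$ and the constant hierarchy $1\ll G\ll 1/\varepsilon$ should force this contribution below $\varepsilon$.

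The principal obstacle is precisely this lower-tail estimate. At $+\infty$ the exponential decay of $h^u$ furnishes a vanishing tail almost for free, but near $0$ the function $h^u$ diverges logarithmically, so one must carefully balance the geometric decrease of $2^{-k}a_u$ against the logarithmic growth of $h^u(2^{-k}m_u)$. The delicate point is to verify that the smallness hypothesis on $\Phi(D_1)$, combined with the freedom in choosing $G$ large with respect to $\varepsilon$, really is sufficient to yield an $\varepsilon$-bound on the lower tail; once this is established, the rest of the argument is a matter of assembling the pieces and invoking the definition of $\lambda_\alpha$.
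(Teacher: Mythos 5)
Your overall strategy---embed the finite chain into a bi-infinite one, extend upward via \cref{lem:extention}, extend downward by shrinking, and invoke the definition of $\lambda_\alpha$---is exactly the paper's skeleton, and the upper extension is identical. The gap is in the lower extension. The bound you write, a multiple of $\sum_{u\in\cS_\alpha}a_u\bigl(1+|\log m_u|\bigr)$, is correct, but it is \emph{not} controlled by $\Phi(D_1)=\sum_u m_u$: the perimeter hypothesis only bounds the dimensions from above, never from below. Nothing prevents $D_1$ from having $m_u=0$ for some $u\in\cS_\alpha$ (this really happens for droplets; see the $u_5$-edge of $D[\ba]$ in \cref{fig:droplet}), in which case the convention of Footnote~\ref{foot:degenerate} makes $W\bigl(2^{-(k+1)}D_1,2^{-k}D_1\bigr)=\infty$, since $s_u=2^{-(k+1)}a_u>0$ while the inner dimension in direction $u$ is $0$. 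Even if all $m_u>0$, an elongated $D_1$ with $\min_u m_u\ll\Phi(D_1)$ makes $|\log m_u|$ unbounded relative to $\Phi(D_1)$, so the ``freedom to choose $G$ large'' cannot rescue the estimate. Simply scaling $D_1$ does not regularise its shape, and the shape is the real obstruction.

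The paper's extra ingredient, which your proposal is missing, is to first Minkowski-sum every droplet in the chain with the circular droplet $D=D[\Phi(D_1)]$, i.e.\ set $D'_i=D+D_i$. By \cref{chain property} (monotonicity of $W$ under simultaneous Minkowski sums) one has $\sum_i W(D_i,D_{i+1})\ge\sum_i W(D'_i,D'_{i+1})$, so this regularisation is free. But now $D'_1$ has every dimension at least $\Phi(D_1)$ and perimeter at most $(|\cS_\alpha|+1)\Phi(D_1)$, i.e.\ bounded aspect ratio, and only then is the downward extension tractable: the paper first interpolates from $D'_1$ to an exactly circular droplet $D[2^{-k-C}]$ in finitely many bounded steps, and then shrinks geometrically using \cref{eq:powers}. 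Once you insert this regularisation step, the geometric-shrinking computation you sketched does close, because $\min_u m'_u$ and $\max_u a'_u$ become comparable; without it, the proof does not go through.
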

The gist of the proof is the following. We seek to produce a bi-infinite sequence $\cD'\in\mathfrak D_\alpha$ with $\cW(\cD)\le 2\varepsilon+\sum_{i=1}^{N-1}W(D_i,D_{i+1})$. To do so, we first slightly enlarge all $D_i$, so that the initial droplet becomes roughly circular. The extension of this finite sequence at $+\infty$ is done as in the proof of \cref{lem:extention}. For the extension at $-\infty$, we proceed in two steps. First, we make the droplet exactly circular in a fixed number of steps and then decrease its radii exponentially to infinity.
\begin{proof}
Set $D=D[\Phi(D_1)]$ and set $D'_i=D+D_i$ for all $i\in\{1,\dots,N\}$. By \cref{chain property} we have
\[\sum_{i=1}^{N-1}W(D_i,D_{i+1})\ge \sum_{i=1}^{N-1}W(D'_i,D'_{i+1}).\]
We further use \cref{lem:extention} applied to $D'_N$ to define $D'_i$ for all $i>N$ in such a way that $\sum_{i\ge N}W(D'_i,D'_{i+1})<\varepsilon$. However, now we have ensured that $D'_1$ is roughly circular. Using this fact, up to translation, we can assume that $D[2^{-k-C}]\subseteq D'_1\subseteq D[2^{-k}]$ with $k>0$ large enough depending on $\varepsilon$. We then proceed as in the proof of \cref{lem:extention} to define droplets $D[2^{-k-C}]=D'_{-N'}\subseteq\dots\subseteq D'_1$ for some $N'\ge 0$ in such a way that
\[\sum_{i=-N'}^0W(D'_i,D_{i+1})\le \varepsilon/2.\]
Here, we crucially use that the dimensions of all $D'_i$ for $i\in\{-N',\dots,0\}$ are at least $2^{-k-C}/C$, but the proof is the same as for \cref{lem:extention}. Finally, recalling \cref{eq:powers}, we may set $D'_{i}=D[2^{-k-C+i+N'}]$ for $i<-N'$ to obtain
\[\sum_{i\in\bbZ}W(D'_{i},D_{i+1}')\le 2\varepsilon+\sum_{i=1}^{N-1}W(D_i,D_{i+1}).\]
Since $(D'_i)_{i\in\bbZ}\in\mathfrak D_\alpha$, we are done by the definition of $\lambda_\alpha$.
\end{proof}

\subsection{Proof of the lower bound of Theorem~\ref{th:main}} 
We are ready to upper bound the probability that a droplet of size $Bp^{-\alpha}$ is spanned, using \cref{pod of H}. Once that is done, \cref{th:main} will follow immediately.
\begin{proposition}[Critical spanning bound]\label{upper bound crossed}
For any $\cS_\alpha$-droplet $D$ satisfying $Bp^{-\alpha}\le\Phi(D)\le CBp^{-\alpha}$ we have
\[\P_p(\cE(D))\leq \exp\left(-(2\lambda-C\e)/p^{\alpha}\right).\]
\end{proposition}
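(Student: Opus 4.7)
The plan is to reduce via \cref{hierarchy} and \cref{number of hierarchies} to controlling a single hierarchy, then combine \cref{pod of H}, \cref{seed perimeter}, and \cref{lem:truncation} by splitting on the pod size.

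By \cref{hierarchy}, whenever $\cE(D)$ occurs some hierarchy $\cH$ of precision $(Tp^{-\alpha}, Zp^{-\alpha})$ for $D$ must also occur. Since $\Phi(D)/(Tp^{-\alpha}) \le CB/T$ is a constant, \cref{number of hierarchies} bounds the number of such hierarchies polynomially in $1/p$; moreover, its proof shows that the height of such a hierarchy is bounded by an absolute constant, so the numbers $S$ of splitters and $N$ of droplets in the sequence produced by \cref{pod of H} are also absolute constants. A union bound thus reduces the problem to bounding $\bbP_p(\cE(\cH))$ for a single $\cH$, with the polynomial factor absorbed into $e^{C\e/p^\alpha}$.

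Fix such a hierarchy $\cH$. By \cref{pod of H}, there is a sequence $D_1 \subseteq \cdots \subseteq D_N$ of $\cS_\alpha$-droplets with $\Phi(D_1) \le BS + \Pod(\cH)$. Since $\Phi(D) \ge Bp^{-\alpha}$, the alternative in \cref{pod of H} with $\Phi(D_N) < Bp^{-\alpha}$ and $D_N \supseteq D$ is excluded, forcing $\Phi(D_N) \in [Bp^{-\alpha}, CBp^{-\alpha}]$. I now split on the pod. If $\Pod(\cH) \ge 2\lambda/(Lp^\alpha)$, then \cref{seed perimeter} alone gives $\bbP_p(\cE(\cH)) \le \exp(-L\Pod(\cH)) \le \exp(-2\lambda/p^\alpha)$, which suffices. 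Otherwise $\Pod(\cH) < 2\lambda/(Lp^\alpha)$; using $S=O(1)$, for $p$ small we obtain $\Phi(D_1^{Zp^{-\alpha}}) \le BS + 2\lambda/(Lp^\alpha) + |\cS_\alpha|Zp^{-\alpha} \le 1/(Gp^\alpha)$ (by $B,Z \ll 1/G$), while $\Phi(D_N^{Zp^{-\alpha}}) \ge Bp^{-\alpha} \ge Gp^{-\alpha}$. Then \cref{lem:truncation} applied to $(D_i^{Zp^{-\alpha}})_{i=1}^{N}$, combined with \cref{prop:lambda:alpha}, yields $\sum_{i=1}^{N-1} W(D_i^{Zp^{-\alpha}}, D_{i+1}^{Zp^{-\alpha}}) \ge 2\lambda_\alpha - 2\varepsilon = 2\lambda - 2\varepsilon$, and substituting into \cref{pod of H} gives $\bbP_p(\cE(\cH)) \le C^N \exp(-(1-\varepsilon)^2(2\lambda-2\varepsilon)/p^\alpha) \le \exp(-(2\lambda-C\varepsilon)/p^\alpha)$.

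The main subtlety lies in verifying that the hierarchy of constants $1 \ll C,K \ll 1/\e \ll G \ll B \ll L \ll 1/Z \ll 1/T \ll 1/p$ makes all intermediate inequalities hold simultaneously---in particular, the boundedness of $S$ must combine with $B \ll L$ and $Z \ll 1/G$ to push $\Phi(D_1^{Zp^{-\alpha}})$ below $1/(Gp^\alpha)$ in the second case, and the loss $(1-\e)^2(2\lambda - 2\e)$ coming from the interplay between \cref{pod of H} and \cref{lem:truncation} must be absorbed into $2\lambda - C\e$ by choosing $C$ large relative to $\lambda$ and $\e$ small relative to $C$, matching the setup in the \emph{Constants} subsection.
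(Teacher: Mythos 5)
Your proposal is correct and follows essentially the same route as the paper: reduce via \cref{hierarchy} and \cref{number of hierarchies}, split on whether $\Pod(\cH)$ exceeds $2\lambda/(Lp^\alpha)$, dispatch the large-pod case with \cref{seed perimeter}, and in the small-pod case feed the sequence from \cref{pod of H} into \cref{lem:truncation} and \cref{prop:lambda:alpha}. The only (cosmetic) differences are that you make explicit two points the paper leaves implicit---the boundedness of the number of splitters $S$ and normal vertices $N$ (used to absorb $C^N$ and $BS$) and the exclusion of the degenerate alternative $\Phi(D_N)<Bp^{-\alpha}$ in \cref{pod of H} via monotonicity of $\Phi$---and you track the perimeter of $D_1^{Zp^{-\alpha}}$ against $1/(Gp^\alpha)$ rather than $1/(Bp^\alpha)$, both of which are fine in view of the ordering of constants.
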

\begin{proof}
\Cref{hierarchy} gives that if $\cE(D)$ occurs, then $\cE(\cH)$ does for some hierarchy $\cH$ of precision $(Tp^{-\alpha},Zp^{-\alpha})$ for $D$. Using \cref{number of hierarchies}, we obtain that for some $c(T)>0$ large enough
\[\P_p(\cE(D))\leq c(T)\Phi(D)^{c(T)}\cdot\max_\cH\P_p(\cE(\cH))\leq \exp\left(\e p^{-\alpha}\right)\max_\cH\P_p(\cE(\cH)).\]
It is thus sufficient to prove 
that for any $\cH$
\[\bbP_p(\cE(\cH))\le \exp\left(-(2\lambda-(C-1)\e)p^{-\alpha}\right).\] 

If $\Pod(\HH)\geq 2\lambda/(L p^{\alpha})$, we are done by \cref{seed perimeter}. We therefore assume that $\Pod(\HH)\leq 2\lambda /(Lp^{\alpha})$.
\Cref{pod of H} yields the existence of a sequence $D_1\subseteq\dots\subseteq D_N$ with $\Phi(D_1)<1/(Bp^{\alpha})$ and $\Phi(D_N)\ge Gp^{-\alpha}$ satisfying
\[\P_p(\E(\HH))\leq C^N\exp\left(-(1-\e)^2p^{-\alpha}\sum_{n=1}^{N-1}W_p\left(D_n^{Zp^{-\alpha}},D_{n+1}^{Zp^{-\alpha}}\right)\right).\]

However, by \cref{lem:truncation,prop:lambda:alpha}, we have
\begin{equation}
\label{eq:sum:W}\sum_{n=1}^{N-1} W_p\left(D_n^{Zp^{-\alpha}},D_{n+1}^{Zp^{-\alpha}}\right) \geq 2\lambda-2\e.\end{equation}
Thus, 
\[\P_p(\cE(\cH))\le C^N\exp\left(-(1-2\e)\left(2\lambda-2\e\right)/p^{\alpha}\right).\]
Since $N$ and $C$ do not depend on $p$, this concludes the proof.
\end{proof}
Concluding the proof of \cref{th:main} from \cref{upper bound crossed} is very standard, the argument dating back to \cite{Aizenman88}. We accordingly need the following result from \cite{Bollobas23}*{Lemma 6.18} (see also \cite{Hartarsky22univlower}*{Lemma A.9}).
\begin{lemma}[Aizenman--Lebowitz lemma]
\label{lem:AL}
    Let $D$ be a $\cS_\alpha$-droplet and $1/\varepsilon\le k\le \Phi(D)$. If $\cE(D)$ occurs, then there exists a $\cS_\alpha$-droplet $D'\subseteq D$ with $k/C\le \Phi(D')\le k$ such that $\cE(D')$ occurs.
\end{lemma}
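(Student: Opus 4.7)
The plan is to run a greedy spanning algorithm on the witness set of $\cE(D)$ and track the perimeter of the largest currently spanned droplet; since this quantity can grow only mildly at each merge step, it must pass through the window $[k/C,k]$. Concretely, given that $\cE(D)$ holds, fix a $K$-connected witness set $X\subseteq[A\cap D]$ whose $\cS_\alpha$-span contains $D$. Initialise a collection $\cC_0$ of singleton $\cS_\alpha$-droplets corresponding to the elements of (a suitably chosen minimal sub-witness of) $X$, each of perimeter at most $CK$. At each step $t$, search for two droplets $D_1,D_2\in\cC_t$ such that $D_1\cup D_2$ is $K$-connected; if such a pair exists, remove $D_1,D_2$ from $\cC_t$ and insert $D_1\vee D_2$. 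Because $X$ is $K$-connected, the procedure terminates after finitely many steps with a collection $\cC_T$ containing a droplet $\widehat D$ whose $\cS_\alpha$-span contains $D$, so $\Phi(\widehat D)\ge\Phi(D)\ge k$.

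Next, I would set $P_t=\max_{D''\in\cC_t}\Phi(D'')$, noting $P_0\le CK$ and $P_T\ge k$. By \cref{dimension}, every merge satisfies $\Phi(D_1\vee D_2)\le\Phi(D_1)+\Phi(D_2)+CK|\cS_\alpha|$, and hence $P_{t+1}\le 2P_t+CK|\cS_\alpha|$. Since $k\ge 1/\varepsilon$ with $\varepsilon$ sufficiently small compared with $K$, this recursion ensures that if $P_t\le k/C$ then $P_{t+1}\le k$ for a suitable absolute constant $C$. Combined with $P_0\le CK\ll k/C$ and $P_T\ge k$, this forces some intermediate index $t^*$ with $P_{t^*}\in[k/C,k]$. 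Taking $D'$ to be the largest droplet in $\cC_{t^*}$ yields a $\cS_\alpha$-droplet with $D'\subseteq D$ of the required perimeter.

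The main obstacle is verifying that $\cE(D')$ itself occurs: one must exhibit a $K$-connected witness $X'\subseteq[A\cap D']$ whose $\cS_\alpha$-span contains $D'$, rather than only $X'\subseteq[A\cap D]$. This is handled inductively along the spanning algorithm by choosing $X$ minimally and maintaining the invariant that each droplet $D''\in\cC_t$ is spanned by a $K$-connected sub-cluster of the witness fully supported by the initial infections $A\cap D''$; the invariant is preserved under $K$-connected merges because the merged droplet $D_1\vee D_2$ contains both spanning sub-clusters and hence their union. This is precisely the content of the cited \cite{Bollobas23}*{Lemma 6.18} and \cite{Hartarsky22univlower}*{Lemma A.9}, to which we defer for the careful bookkeeping.
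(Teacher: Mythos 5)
The paper does not give a proof of this lemma: it is imported verbatim from \cite{Bollobas23}*{Lemma 6.18} (see also \cite{Hartarsky22univlower}*{Lemma A.9}), so there is no in-paper argument to compare against. Your sketch is the standard Aizenman--Lebowitz greedy-merging argument underlying those references, and the quantitative core is right: by the sub-additivity of the span (\cref{dimension}), the maximal perimeter $P_t$ increases by at most $P_{t+1}\le 2P_t+CK|\cS_\alpha|$ per merge, and since $k\ge 1/\varepsilon$ with $\varepsilon\ll 1/(CK)$, the sequence cannot jump over the window $[k/C,k]$.

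One point deserves scrutiny, though. You merge $D_1,D_2$ whenever $D_1\cup D_2$ is $K$-connected, and then justify the spanning invariant by saying ``the merged droplet $D_1\vee D_2$ contains both spanning sub-clusters and hence their union''. But $\cE(D_1\vee D_2)$ requires a $K$-connected witness, and the union $X_1\cup X_2$ of the two witness clusters (with $X_i\subseteq D_i$) need not be $K$-connected merely because $D_1\cup D_2$ is: the clusters can sit at opposite ends of their droplets. The correct merge criterion is that $X_1\cup X_2$ is $K$-connected; this preserves $K$-connectivity of the merged cluster, and it still forces $D_1\cup D_2$ to be $K$-connected (since there are $x_1\in X_1\subseteq D_1$, $x_2\in X_2\subseteq D_2$ within distance $K$), so \cref{dimension} still gives the perimeter bound. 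A related point you leave implicit is why the droplet you extract satisfies $D'\subseteq D$; this again requires some care in choosing the witness $X$ (one typically works with $X\subseteq [A\cap D]\cap D$). You do flag the bookkeeping as the crux and defer to the cited references, which is reasonable since the paper itself does exactly the same; just note that the informal ``contains their union'' justification is not by itself enough to preserve the spanning invariant.
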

\begin{proof}[Proof of \cref{th:lower}]
Let $\Lambda=\exp (\lambda-C\e)/p^{\alpha}$. Let $\E$ be the event that $0\in[A\cap [-\Lambda,\Lambda]^2]$. We claim that $\bbP_p(\cE)\to0$ as $p\to0$. Indeed, if $\cE$ occurs, then the origin belongs to a spanned $\cS_\alpha$-droplet $D$ with $1\le \Phi(D)\le C\Lambda$. As in \cref{number of hierarchies}, there are at most $(C\Phi(D))^{|\cS_\alpha|}$ possible choices for this $\cS_\alpha$-droplet, given its perimeter. 

\noindent\textbf{Case 1.} Assume that $\Phi(D)\le \log(1/p)$. Then $\bbP_p(\cE(D))\le p|D\cap\bbZ^2|$, so
\begin{equation}
\label{eq:wrap:up:1}\sum_{\substack{D\ni 0,\\\Phi(D)\le \log(1/p)}}\bbP_p(\cE(D))\le p(\log(1/p))^C.
\end{equation}

\noindent\textbf{Case 2.} Assume that  $\log(1/p)\le \Phi(D)\le CB/p^\alpha$. Then by \cref{lem:seeds}, 
\begin{equation}
\label{eq:wrap:up:2}\sum_{\substack{D\ni 0,\\\log(1/p)\le \Phi(D)\le CB/p^\alpha}}\bbP_p(\cE(D))\le C\sum_{\phi\ge \log(1/p)}\phi^Ce^{-\phi/L}\le 1/\log(1/p).
\end{equation}

\noindent\textbf{Case 3.} Assume that  $CB/p^\alpha\le \Phi(D)\le C\Lambda$. Then by \cref{lem:AL}, if $\cE(D)$ occurs, we can find a $\cS_\alpha$-droplet $D'\subset [-C\Lambda,C\Lambda]^2$ with $B/p^\alpha\Phi(D')\le CB/p^\alpha$ such that $\cE(D')$ occurs. Thus, a union bound on $D'$ gives
\begin{equation}
\label{eq:wrap:up:3}\bbP_p\Big(\bigcup_{\substack{D\ni0,\\CB/p^\alpha\le\Phi(D)\le C\Lambda}}\cE(D)\Big)\le p^{-C}\Lambda^2\max_{D'}\bbP_p(\cE(D'))\le p^{-C}e^{-C\varepsilon/p^\alpha},\end{equation}
using \cref{upper bound crossed} for the last inequality.

Summing \cref{eq:wrap:up:1,eq:wrap:up:2,eq:wrap:up:3}, we obtain that $\bbP_p(\cE)\to0$ as $p\to0$, as desired. This concludes the proof of 
\cref{th:lower}.
\end{proof}
\begin{proof}[Proof of \cref{th:main}]
    The result follows directly from \cref{th:upper,th:lower}.
\end{proof}

\section{Future directions}
\label{sec:future}
In conclusion, let us comment on possible generalisations of \cref{th:main}. For the purposes of this discussion, let us recall a few notions from \cite{Bollobas23}. A two-dimensional update family $\cU$ is \emph{critical}, if there exists a semi-circle of $S^1$ containing a finite number of stable directions, but every open semi-circle contains at least one stable direction. The \emph{difficulty} of a critical update family is given by 
\[\alpha=\min_{u\in S^1}\max_{v\in S^1: \<u,v\>>0}\alpha(v),\]
where $\alpha(v)$ is given by \cref{eq:def:alpha}, if $v$ is isolated stable or unstable, and $\alpha(v)=\infty$ otherwise (this definition coincides with \cref{eq:def:alpha:global} for isotropic models). A critical update family of difficulty $\alpha$ is \emph{unbalanced}, if there exist two opposite directions $v,-v\in S^1$ such that $\min(\alpha(v),\alpha(-v))>\alpha$, and \emph{balanced} otherwise.

Firstly, we expect that an adapted version of \cref{th:main} holds for all critical models.
\begin{conjecture}
\label{conj:sharp:threshold}
    Let $\cU$ be a critical two-dimensional update family $\cU$. Then there exists a constant $\lambda=\lambda(\cU)$ such that for all $\varepsilon>0$,
    \[\lim_{p\to 0}\bbP_p\left(\left|\frac{p^\alpha\log\tau}{(\log(1/p))^\gamma}-\lambda\right|>\varepsilon\right)=0,\]
    where $\gamma=0$, if $\cU$ is balanced, and $\gamma=2$, if $\cU$ is unbalanced.
\end{conjecture}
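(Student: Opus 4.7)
The plan is to attack the balanced and unbalanced cases separately, since they differ not only in the prefactor $\gamma$ but also in the shape of the critical droplet. In the balanced case one can essentially rerun the argument of \cref{sec:upper,sec:lower} after three technical modifications. First, voracity is used in \cref{sec:upper} solely via \cref{lem:helping} to convert helping sets into $W$-helping sets in bounded time; for general critical families one can instead invoke the generic pod construction of \cite{Hartarsky22univlower}*{Appendix A} (itself used and refined in \cite{Bollobas23}), which still guarantees that any helping set together with a half-plane eventually generates a $W$-helping set, albeit at a distance that may depend weakly on $p$. Second, symmetry is used in \cref{sec:lower} only in the argument of \cref{upper bound half-plane} to allow an occupied line to be charged in either sense; without symmetry one defines $h^u_p$ and $h^{-u}_p$ separately, and balancedness ensures that $\min(\alpha(u),\alpha(-u))=\alpha$ at every critical direction, which preserves the two-sided crossing estimates. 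Third, when $\cS_\alpha$ fails to meet some open semicircle, droplets as defined in \cref{subsec:droplets} can degenerate, and one must augment $\cS_\alpha$ by enough isolated stable directions of smaller difficulty so that droplets remain bounded polygons; this only modifies the perimeter $\Phi$ while keeping $W_p$ supported on true $\cS_\alpha$-directions, so that an analogue of \cref{prop:lambda:alpha} still holds.

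The unbalanced case requires a genuinely new ingredient because the critical droplet is no longer isotropic. Let $v,-v\in S^1$ be opposite stable directions with $\min(\alpha(v),\alpha(-v))>\alpha$. Heuristically, growth along $\pm v$ costs $p^{\alpha(v)}$ per infected site on a crossing line, while growth in other stable directions costs $p^\alpha$; at the critical scale these two types of growth must balance, which forces the droplet to have length of order $(\log(1/p))\cdot p^{-\alpha}$ along $v$ and width of order $p^{-\alpha}$ in the perpendicular direction. This phenomenon is already visible in the special cases treated in \cites{Duminil-Copin13,Bollobas17}, and it is the origin of the $(\log(1/p))^2$ prefactor: one factor comes from the critical length, the other from the logarithmic cost of advancing one step along $v$. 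The first step is then to introduce an anisotropic droplet geometry in which lengths along $v$ are measured in units of $(\log(1/p))\cdot p^{-\alpha}$ and all other lengths in units of $p^{-\alpha}$. The functional $W_p$ of \cref{def:W} is then extended by an additive term of the form $(\alpha(v)-\alpha)\log(1/p)\cdot s_{\pm v}$, reflecting the dominant contribution of $\pm v$-crossings at the critical scale, and $\lambda$ becomes the minimum of the resulting anisotropic functional over nested droplet sequences.

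The main obstacle is the unbalanced lower bound. The analogue of \cref{prop:bound:E} for elongated rectangles $R^{\pm v}$ is delicate because the leftover region used in that proof becomes comparable to the rectangles themselves once the aspect ratio diverges, so conditioning on the leftover no longer provides enough independence. A plausible route is an iterative renormalisation in which each hard rectangle is decomposed into sub-rectangles at a hierarchy of intermediate scales, and the constraint that each sub-rectangle contain an $\alpha(v)$-helping set is revealed and paid for sequentially, in the spirit of the analysis in \cite{Bollobas23}*{Sections 7--8}. Extracting the sharp constant $\lambda$ from such a renormalisation, and matching it to the constant produced by an upper-bound construction along the same anisotropic droplet sequences, is the essential remaining difficulty and the place where genuinely new analytic work would be needed.
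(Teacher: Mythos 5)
The statement you are addressing is labelled a \emph{conjecture} in the paper precisely because it is open: the paper proves \cref{th:main} only for symmetric, voracious, isotropic families, and \cref{sec:future} is devoted to explaining why the general critical case is not a routine extension. What you have written is a research plan, not a proof, and you acknowledge this yourself in your final sentence (``the essential remaining difficulty and the place where genuinely new analytic work would be needed''). Judged against the paper's own account of the obstacles, the plan is partly consistent with the authors' expectations but also understates the difficulties in ways worth flagging.

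Your claim that voracity can be removed by ``invoking the generic pod construction'' is the most serious misstep. The paper explicitly argues the opposite: in the discussion surrounding \cref{fig:voracity:examples}, the authors exhibit non-voracious isotropic families $\cU_1,\cU_2,\cU_3$ for which the correct sharp constant is $\pi^2/6$, whereas the constant $\lambda$ produced by \cref{def:W} evaluates to $\pi^2/9$. This is not a technical wrinkle in converting helping sets into $W$-helping sets; it reflects the fact that without voracity the process may prefer to fill only sublattices, so the variational problem itself must change, not merely the bookkeeping in \cref{lem:helping}. The authors state plainly that ``we expect voracity to be the most challenging hypothesis to remove'' and explain that ``in some cases, it may be more efficient to grow only periodically filled droplets.'' Your proposal, by contrast, treats voracity as removable with a black-box reference, which would lead to the wrong constant. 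Similarly, your assertion that balancedness forces $\min(\alpha(u),\alpha(-u))=\alpha$ at critical directions is not the definition: balancedness only rules out a pair of opposite directions both with difficulty strictly exceeding $\alpha$, and for a non-symmetric balanced family the two-sided decomposition in \cref{upper bound half-plane} (which uses $\cA^{u}$ and $\cA^{-u}$ symmetrically) genuinely breaks and must be replaced, not just relabelled. Your unbalanced heuristic --- anisotropic droplets with one axis stretched by a $\log(1/p)$ factor and a modified work functional --- matches the intuition behind \cites{Duminil-Copin13,Bollobas17} and the $\gamma=2$ prefactor, and is the right qualitative picture; but as you concede, the lower bound at diverging aspect ratio is exactly where no argument currently exists, and the leftover-conditioning step of \cref{prop:bound:E} does fail there for the reason you give. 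In short: the plan's balanced-case claims are too optimistic in a way the paper explicitly warns against, and the unbalanced-case plan correctly identifies the gap without closing it.
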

However, in general we do not expect the constant $\lambda$ in \cref{conj:sharp:threshold} to be given by \cref{def:W}. We do believe that \cref{conj:sharp:threshold} holds with $\lambda$ from \cref{def:W} for all isotropic voracious update families. That is, we expect that \cref{th:lower} holds without the symmetry assumption. A natural subsequent step would be to extend our methods to (non-symmetric) balanced voracious models.

\begin{figure}
    \centering
    \begin{tikzpicture}[x=0.5cm,y=0.5cm]
\draw (-2,2) circle (0.3);
\draw (-4,2) circle (0.3);

\draw (3,2) circle (0.3);
\draw (3,0) circle (0.3);
 \draw[step=1,gray,very thin](-5,-1)grid(-1,3);
\draw[step=1,gray,very thin](0,-1)grid(4,3);
\draw (-3,1) node[cross=0.15cm,rotate=0] {};
\draw (2,1) node[cross=0.15cm,rotate=0] {};
\draw (-3,-1.5) node[below]{$U_1$};
\draw (2,-1.5) node[below]{$U_2$};

\begin{scope}[shift={(10,5)}]
\draw (-4,-5) circle (0.3);
\draw (-4,-3) circle (0.3);

\draw (1,-5) circle (0.3);
\draw (3,-5) circle (0.3);
\draw[step=1,gray,very thin](-5,-6)grid(-1,-2);
\draw[step=1,gray,very thin](0,-6)grid(4,-2);
\draw (-3,-4) node[cross=0.15cm,rotate=0] {};
\draw (2,-4) node[cross=0.15cm,rotate=0] {};
\draw (-3,-6.5) node[below]{$U_3$};
\draw (2,-6.5) node[below]{$U_4$};
\end{scope}

\begin{scope}[shift={(20,5)}]
\draw (-5,-6) circle (0.3);
\draw (-3,-6) circle (0.3);
\draw (-1,-6) circle (0.3);
\draw (-5,-2) circle (0.3);
\draw (-3,-2) circle (0.3);
\draw (-1,-2) circle (0.3);
\draw (-5,-4) circle (0.3);
\draw (-1,-4) circle (0.3);
\draw[step=1,gray,very thin](-5,-6)grid(-1,-2);
\draw (-3,-4) node[cross=0.15cm,rotate=0] {};
\draw (-3,-6.5) node[below]{$U_5$};
\end{scope}
\begin{scope}[shift={(25,5)}]
\draw (-5,-6) circle (0.3);
\draw (-4,-6) circle (0.3);
\draw (-3,-6) circle (0.3);
\draw (-2,-6) circle (0.3);
\draw (-1,-6) circle (0.3);
\draw (-5,-2) circle (0.3);
\draw (-4,-2) circle (0.3);
\draw (-3,-2) circle (0.3);
\draw (-2,-2) circle (0.3);
\draw (-1,-2) circle (0.3);
\draw (-5,-4) circle (0.3);
\draw (-1,-4) circle (0.3);
\draw (-5,-3) circle (0.3);
\draw (-1,-3) circle (0.3);
\draw (-5,-5) circle (0.3);
\draw (-1,-5) circle (0.3);
\draw[step=1,gray,very thin](-5,-6)grid(-1,-2);
\draw (-3,-4) node[cross=0.15cm,rotate=0] {};
\draw (-3,-6.5) node[below]{$U_6$};
\end{scope}
\end{tikzpicture}
    \caption{Six update rules with the origin marked by a cross in each.}
    \label{fig:voracity:examples}
\end{figure}
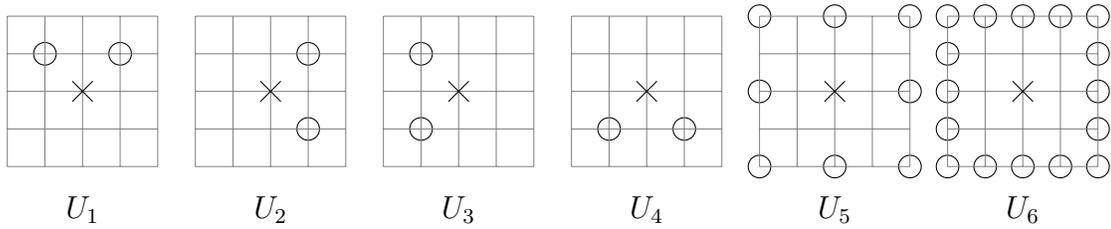

On the other hand, for non-voracious models, \cref{th:upper} does not apply and \cref{th:lower} is not sharp. To provide an example of this, consider the update rules in \cref{fig:voracity:examples} and the update families given by $\cU_1=\{U_1,U_2,U_3,U_4\}$, $\cU_2=\cU_1\cup\{U_5\}$, $\cU_3=\cU_1\cup\{U_6\}$. Viewed as models on $\bbZ^2$, all of three families are isotropic non-voracious and have difficulty $1$. However, $\cU_1$ and $\cU_2$ may be viewed as models on the even sublattice of $\bbZ^2$, in which case they become voracious. We may then apply \cref{th:main} and, inspecting the proof (also see \cite{Holroyd03}), shows that \cref{conj:sharp:threshold} holds with $\lambda=\pi^2/6$ for both $\cU_1$ and $\cU_2$. Moreover, it is not hard to see that $\tau$ for $\cU_3$ is stochastically bounded from above and from below by $\tau$ for $\cU_1$ and $\cU_2$ respectively. Therefore, \cref{conj:sharp:threshold} holds for all three models with $\lambda=\pi^2/6$. Yet, $\cU_3$ cannot be viewed as acting only on the even sublattice. For each of the three models, viewed as acting on all of $\bbZ^2$, the constant $\lambda$ in \cref{def:W} is rather given by $\pi^2/9$. This can be seen along the lines of \cite{Holroyd03}, since helping sets are single sites on the first or second line perpendicular to the stable direction $u=(\pm1/\sqrt 2,\pm1/\sqrt 2)$.

The example above is only the first symptom of the problems arising in the absence of voracity. Indeed, sublattices may have rather complex interactions and, in some cases, it may be more efficient to grow only periodically filled droplets. Furthermore, whether droplets prefer to be completely or partially filled may also depend on the direction. In view of the above, we expect voracity to be the most challenging hypothesis to remove.

\section*{Acknowledgements}
This project was supported by the Austria Science Fund (FWF): P35428-N. This work has received funding from the European Research Council (ERC) under the European Union’s Horizon 2020 research and innovation programme (grant agreements No. 757296). The first author acknowledges funding from the NCCR SwissMap, the Swiss FNS, and the Simons collaboration on localization of waves. For open access purposes, the authors have applied a CC BY public copyright license to any author-accepted manuscript version arising from this submission. We thank Lyuben Lichev and the anonymous referees for helpful comments on the presentation. We also thank Ander Holroyd for suggesting the topic back in 2008 and for helpful discussions.

\bibliographystyle{plain}
\bibliography{Bib}
\end{document}